\newcommand{\FF}{{\mathbb{F}}}
\newcommand{\ZZ}{{\mathbb{Z}}}
\newcommand{\fS}{{\mathfrak{S}}}
\newcommand{\ba}{{\mathbf{a}}}
\newcommand{\bb}{{\mathbf{b}}}
\newcommand{\bC}{{\mathbf{C}}}
\newcommand{\bG}{{\mathbf{G}}}
\newcommand{\tbG}{{\tilde\bG}}
\newcommand{\bH}{{\mathbf{H}}}
\newcommand{\bK}{{\mathbf{K}}}
\newcommand{\bL}{{\mathbf{L}}}
\newcommand{\tbL}{{\tilde\bL}}
\newcommand{\bM}{{\mathbf{M}}}
\newcommand{\bN}{{\mathbf{N}}}
\newcommand{\bP}{{\mathbf{P}}}
\newcommand{\bS}{{\mathbf{S}}}
\newcommand{\bT}{{\mathbf{T}}}
\newcommand{\bX}{{\mathbf{X}}}
\newcommand{\bY}{{\mathbf{Y}}}
\newcommand{\bU}{{\mathbf{U}}}
\newcommand{\cE}{{\mathcal{E}}}
\newcommand{\cL}{{\mathcal{L}}}
\newcommand{\ad}{{\operatorname{ad}}}
\newcommand{\diag}{{\operatorname{diag}}}
\newcommand{\Ind}{{\operatorname{Ind}}}
\newcommand{\Irr}{{\operatorname{Irr}}}
\newcommand{\Res}{{\operatorname{Res}}}
\newcommand{\GL}{{\operatorname{GL}}}
\newcommand{\PGL}{{\operatorname{PGL}}}
\newcommand{\SL}{{\operatorname{SL}}}
\newcommand{\GU}{{\operatorname{GU}}}
\newcommand{\SU}{{\operatorname{SU}}}
\newcommand\RLG{{R_\bL^\bG}}
\newcommand\RMG{{R_\bM^\bG}}
\newcommand\RTG{{R_\bT^\bG}}
\newcommand\sRLG{{{}^*\!R_\bL^\bG}}
\newcommand\sRMG{{{}^*\!R_\bM^\bG}}
\newcommand\sRTG{{{}^*\!R_\bT^\bG}}
\newcommand\sRTL{{{}^*\!R_\bT^\bL}}
\newcommand\sRXL{{{}^*\!R_\bX^\bL}}
\newcommand\Zc{{Z^\circ}}
\newcommand{\tw}[1]{{}^#1\!}
\newcommand{\Ph}[1]{\Phi_#1}
\let\eps=\epsilon
\let\la=\lambda
\newtheorem{thm}{Theorem}[section]
\newtheorem{lem}[thm]{Lemma}
\newtheorem{prop}[thm]{Proposition}
\newtheorem*{thmA}{Theorem A}
\newtheorem*{thmB}{Theorem B}
\theoremstyle{definition}
\newtheorem{defn}[thm]{Definition}
\newtheorem{exmp}[thm]{Example}
\theoremstyle{remark}
\newtheorem{rem}[thm]{Remark}
\begin{document}

\title[Lusztig induction and $\ell$-blocks]{Lusztig induction and $\ell$-blocks\\ of finite reductive groups}

\date{\today}

\author{Radha Kessar}
\address{Department of Mathematical Sciences, City University London,
         Northampton Square, London EC1V 01B, U.K.}
\email{Radha.Kessar.1@city.ac.uk}
\author{Gunter Malle}
\address{FB Mathematik, TU Kaiserslautern, Postfach 3049,
         67653 Kaisers\-lautern, Germany.}
\email{malle@mathematik.uni-kl.de}

\thanks{The second author gratefully acknowledges financial support by ERC
  Advanced Grant 291512.}

\keywords{finite reductive groups, $\ell$-blocks, $e$-Harish-Chandra series,
  Lusztig induction}

\subjclass[2010]{20C20, 20C15, 20C33}

\dedicatory{To the memory of Robert Steinberg}

\begin{abstract}
We present a unified parametrisation of $\ell$-blocks of quasi-simple finite
groups of Lie type in non-defining characteristic via Lusztig's induction
functor in terms of $e$-Jordan-cuspidal pairs and $e$-Jordan quasi-central
cuspidal pairs.
\end{abstract}

\maketitle


\section{Introduction }   \label{sec:intro}
The work of Fong and Srinivasan for classical matrix groups and of Schewe for
certain blocks of groups of exceptional type exhibited a close relation
between the $\ell$-modular block structure of groups of Lie type and the
decomposition of Lusztig's induction functor, defined in terms of $\ell$-adic
cohomology.
This connection was extended to
unipotent blocks of arbitrary finite reductive groups and large primes $\ell$
by Brou\'e--Malle--Michel \cite{BMM}, to all unipotent blocks by
Cabanes--Enguehard \cite{CE94} and Enguehard \cite{En00}, to groups with
connected center and primes $\ell\ge7$ by Cabanes--Enguehard \cite{CE99}, to
non-quasi-isolated blocks by Bonnaf\'e--Rouquier \cite{BR03} and to
quasi-isolated blocks of exceptional groups at bad primes by the authors
\cite{KM}.  \par
It is the main purpose of this paper to unify and extend all of the preceding
results in particular from \cite{CE99} so as
to establish a statement in its largest possible generality, without
restrictions on the prime $\ell$, the type of group or the type of block, in
terms of $e$-Jordan quasi-central cuspidal pairs (see Section~\ref{sec:cusp}
for the notation used).

\begin{thmA}   \label{thmA}
 Let $\bH$ be a simple algebraic group of simply connected type with a
 Frobenius endomorphism $F:\bH\rightarrow\bH$ endowing $\bH$ with an
 $\FF_q$-rational structure. Let $\bG$ be an $F$-stable Levi subgroup of $\bH$.
 Let $\ell$ be a prime not dividing $q$ and set $e=e_\ell(q)$.
 \begin{enumerate}[\rm(a)]
  \item For any $e$-Jordan-cuspidal pair $(\bL,\la)$ of $\bG$ such that
   $\la\in\cE(\bL^F,\ell')$, there exists a unique $\ell$-block
   $b_{\bG^F}(\bL,\la)$ of $\bG^F$ such that all irreducible constituents
   of $\RLG(\la)$ lie in $b_{\bG^F}(\bL,\la)$.
  \item The map $\Xi:(\bL,\la)\mapsto b_{\bG^F}(\bL,\la)$ is a
   surjection from the set of $\bG^F$-conjugacy classes of $e$-Jordan-cuspidal
   pairs $(\bL,\la)$ of $\bG$ such that $\la \in \cE(\bL^F,\ell')$ to the set
   of $\ell$-blocks of $\bG^F$.
  \item The map $\Xi$ restricts to a surjection from the set of
   $\bG^F$-conjugacy classes of $e$-Jordan quasi-central cuspidal pairs
   $(\bL,\la)$ of $\bG$ such that $\la \in \cE(\bL^F,\ell')$ to the set
   of $\ell$-blocks of $\bG^F$.
  \item For $\ell\ge3$ the map $\Xi$ restricts to a bijection between the set
   of $\bG^F$-conjugacy classes of $e$-Jordan quasi-central cuspidal pairs
   $(\bL,\la)$ of $\bG$ with $\la \in \cE(\bL^F,\ell')$ and the set of
   $\ell$-blocks of $\bG^F$.
  \item The map $\Xi$ itself is bijective if $\ell\geq 3$ is good for $\bG$,
   and $\ell\ne 3$ if $\bG^F$ has a factor $\tw3D_4(q)$.
 \end{enumerate}
\end{thmA}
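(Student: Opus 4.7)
The plan is to prove parts (a)--(c) by reducing to unipotent blocks of centralizers of semisimple $\ell'$-elements via Jordan decomposition, and then to prove the injectivity statements (d) and (e) by a careful analysis of when two $e$-Jordan-cuspidal pairs can give rise to the same $\ell$-block.

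I would start by setting up the standard framework. By the theorem of Brou\'e--Michel the $\ell$-blocks of $\bG^F$ are unions of rational Lusztig series $\cE(\bG^F,(s))$ where $s$ runs over $\bG^{*F}$-conjugacy classes of semisimple $\ell'$-elements of $\bG^*$. Fix such an $s$ and consider the chunk $\cE_\ell(\bG^F,s)$ of characters collected into blocks with that label. Under Jordan decomposition, characters in $\cE_\ell(\bG^F,s)$ correspond to unipotent characters of $C_{\bG^*}^F(s)$, and this correspondence translates $e$-Jordan-cuspidal pairs $(\bL,\la)$ of $\bG$ with $\la\in\cE(\bL^F,s)$ into ordinary $e$-cuspidal unipotent pairs of the Levi $C_{\bL^*}(s)^F$ of $C_{\bG^*}(s)^F$. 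This dictionary, together with the compatibility of $\RLG$ with Jordan decomposition established in \cite{BMM}, is the backbone of the argument.

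With this in hand, part (a) follows from Enguehard's parametrisation of unipotent $\ell$-blocks by $e$-cuspidal pairs \cite{En00} applied to $C_{\bG^*}(s)^F$: the constituents of $\RLG(\la)$ all correspond to unipotent characters in the single unipotent block of $C_{\bG^*}(s)^F$ attached to the pair dual to $(\bL,\la)$, hence lie in a single $\ell$-block of $\bG^F$. For surjectivity (b), fix a block $B$ of $\bG^F$ with semisimple label $s$. If $s$ is not quasi-isolated in $\bG^*$, use the Bonnaf\'e--Rouquier Morita equivalence \cite{BR03} to transport the problem to a proper Levi and induct on $\dim\bG$. If $s$ is quasi-isolated, apply the parametrisations from \cite{En00,CE99,KM} to produce an $e$-cuspidal unipotent pair of $C_{\bG^*}(s)^F$, and lift it back via Jordan decomposition to an $e$-Jordan-cuspidal pair. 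Part (c) reduces to checking that every $\bG^F$-orbit of $e$-Jordan-cuspidal pairs contains a quasi-central representative, which is a direct verification at the level of $d$-split Levis.

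The injectivity statements (d) and (e) form the main obstacle. If two pairs $(\bL_i,\la_i)$ map to the same block, they automatically share the semisimple label $s$ up to $\bG^{*F}$-conjugacy, so one is reduced to asking whether two $e$-cuspidal unipotent pairs of $C_{\bG^*}(s)^F$ can determine the same unipotent $\ell$-block. For $\ell\ge 3$ and quasi-central pairs, uniqueness is essentially built into the constructions of \cite{En00,CE99,KM}, yielding (d). Statement (e) requires more: one must rule out collisions among non-quasi-central $e$-Jordan-cuspidal pairs, which holds automatically when $\ell$ is good for $\bG$, since in that case $e$-Jordan-cuspidality and its quasi-central refinement agree. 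The lone exception is $\tw3D_4$ at $\ell=3$, where a residual non-trivial relative Weyl group action creates extra $\bG^F$-orbits of non-quasi-central pairs that all map to a single block. I expect the bulk of the technical work to lie in verifying these claims uniformly across the finite list of quasi-isolated blocks of exceptional types at bad primes, where the explicit data from \cite{KM} must be combined with careful orbit-counting for $d$-split Levis.
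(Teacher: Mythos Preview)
Your proposal has a structural gap in part~(a) that propagates throughout. You want to deduce that all constituents of $\RLG(\la)$ lie in a single block by transferring via Jordan decomposition to a statement about unipotent blocks of $C_{\bG^*}(s)^F$ and invoking \cite{En00}. But Jordan decomposition of characters is not known to be compatible with the partition into $\ell$-blocks in the generality you need: Brou\'e--Michel only gives that $\cE_\ell(\bG^F,s)$ is a union of blocks, not a block-by-block correspondence with unipotent blocks of the (possibly disconnected, possibly non-Levi) centraliser. The Bonnaf\'e--Rouquier equivalence \cite{BR03} provides such a correspondence exactly when $C_{\bG^*}(s)$ lies in a proper Levi, i.e.\ when $s$ is not quasi-isolated; in the quasi-isolated case there is no such reduction and one must prove the statement about $\RLG(\la)$ directly inside $\bG$. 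The paper does this in Theorem~\ref{thm:all e-splits}: by induction on $\dim\bG$ and a case analysis in exceptional types at bad primes, it shows that Lusztig induction from any $e$-split Levi sends a whole block into a single block. Parts~(a) and~(b) then follow without any appeal to a block-level Jordan decomposition. Also, your reduction for (c) --- that every $e$-Jordan-cuspidal orbit contains a quasi-central representative --- is false (it would collapse the distinction between (d) and (e)); the paper instead constructs a specific quasi-central pair for each block.

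For (e), your sketch substantially underestimates the work. Much of the injectivity is in \cite[Thm.~4.1]{CE99}, but that result excludes $\ell=3$ with $3\notin\Gamma(\bG,F)$ in type $A$, precisely because defect groups can fail to have a unique maximal abelian normal subgroup (they may be extra-special of order~$27$). The paper closes this gap via explicit computations in $\SL_n$ and $\SU_n$ (Lemmas~\ref{lem:SL-non-Cabanes} and~\ref{lem:doublecentraliser}): even though the defect group is not Cabanes, the relevant subgroups of order~$9$ are conjugate by elements centralising $C_{\bG^F}(D)$, and this is enough to force injectivity via Brauer-pair arguments. This is one of the main new technical ingredients of the paper and does not appear in your outline. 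The proof of (d) at bad primes likewise requires a separate detailed argument descending to $[\bG,\bG]$ and using the Cabanes property of specific maximal Brauer pairs read off from the tables in \cite{KM}, rather than being ``built into'' the earlier parametrisations.
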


The restrictions in (d) and~(e) are necessary (see Remark~\ref{rem:part(e)} and
Example~\ref{exmp:part(d)}).

In fact, part~(a) of the preceding result is a special case of the
following characterisation of the $\ell'$-characters in a given $\ell$-block
in terms of Lusztig induction:

\begin{thmB}   \label{thmB}
 In the setting of Theorem~A let $b$ be an $\ell$-block of $\bG^F$ and denote
 by $\cL(b)$ the set of $e$-Jordan cuspidal pairs $(\bL,\la)$ of $\bG$ such that
 $\Irr(b)\cap\RLG(\la)\ne\emptyset$. Then
 $$\Irr(b)\cap\cE(\bG^F,\ell')
   =\{\chi\in\cE(\bG^F,\ell')\mid \exists\,(\bL,\la)\in\cL(b)\text{ with }
    (\bL,\la)\ll_e(\bG,\chi)\}.$$
\end{thmB}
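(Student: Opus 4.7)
The plan is to derive Theorem~B as a near-immediate consequence of Theorem~A(a), coupled with the existence portion of $e$-Jordan-Harish-Chandra theory: every character $\chi\in\cE(\bG^F,\ell')$ arises as a constituent of $\RLG(\la)$ for some $e$-Jordan-cuspidal pair $(\bL,\la)$ with $\la\in\cE(\bL^F,\ell')$.

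First I would unpack the relation $(\bL,\la)\ll_e(\bG,\chi)$, which by construction records that $(\bL,\la)$ is an $e$-Jordan-cuspidal pair of $\bG$ and that $\chi$ appears as an irreducible constituent of $\RLG(\la)$. For the inclusion ``$\subseteq$'', given $\chi\in\Irr(b)\cap\cE(\bG^F,\ell')$ I would invoke the existence of an $e$-Jordan-cuspidal predecessor $(\bL,\la)\ll_e(\bG,\chi)$. Because Lusztig induction preserves the Lusztig series attached to a semisimple element in the dual group, the series label of $\la$ must agree with that of $\chi$, so $\la\in\cE(\bL^F,\ell')$ comes for free. Since $\chi\in\Irr(b)\cap\RLG(\la)$ is nonempty, this pair lies in $\cL(b)$.

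For the reverse inclusion, I would take $\chi\in\cE(\bG^F,\ell')$ together with $(\bL,\la)\in\cL(b)$ satisfying $(\bL,\la)\ll_e(\bG,\chi)$. The same Lusztig-series compatibility, applied to the constituent $\chi$ of $\RLG(\la)$, forces $\la\in\cE(\bL^F,\ell')$, so Theorem~A(a) applies and assigns a unique $\ell$-block $b_{\bG^F}(\bL,\la)$ containing every irreducible constituent of $\RLG(\la)$. By the definition of $\cL(b)$ some character of $b$ lies in $\RLG(\la)$, which forces $b_{\bG^F}(\bL,\la)=b$; combined with $\chi\in\RLG(\la)$ this yields $\chi\in\Irr(b)$.

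The main obstacle I anticipate is not the block-theoretic argument above---given Theorem~A(a) that step is almost formal---but rather the existence statement invoked at the outset: every $\ell'$-character admits an $e$-Jordan-cuspidal predecessor. This rests on Jordan decomposition of characters, reducing to the existence of unipotent $e$-Harish-Chandra series inside the centraliser $C_{\bG^*}(s)^F$ of the semisimple $\ell$-regular label of $\chi$, combined with the transport of the resulting unipotent $e$-cuspidal pair back through duality to an $e$-Jordan-cuspidal pair of $\bG$. Once those foundational inputs are in place---and in the disconnected-centraliser case they require some care with Clifford theory---Theorem~B becomes essentially a one-line corollary of Theorem~A(a).
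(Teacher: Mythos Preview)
Your argument rests on a misreading of the relation $\ll_e$. In the paper (and in \cite[1.11]{CE99}) this is defined as the \emph{transitive closure} of $\le_e$, where $(\bM_1,\mu_1)\le_e(\bM_2,\mu_2)$ means $\bM_1\le\bM_2$ and $\mu_2$ is a constituent of $R_{\bM_1}^{\bM_2}(\mu_1)$. Thus $(\bL,\la)\ll_e(\bG,\chi)$ asserts the existence of a chain of $e$-split Levi subgroups $\bL=\bM_r\lneq\cdots\lneq\bM_0=\bG$ with characters $\mu_i$ such that each $\mu_{i-1}$ is a constituent of $R_{\bM_i}^{\bM_{i-1}}(\mu_i)$; it does \emph{not} say that $\chi$ is a constituent of $\RLG(\la)$. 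Whether $\le_e$ and $\ll_e$ coincide is an open problem (explicitly flagged in the paper), because signs in Lusztig induction allow cancellation, so transitivity of the constituent relation is not known in general. Both directions of your argument use the unjustified implication $(\bL,\la)\ll_e(\bG,\chi)\Rightarrow\chi\text{ is a constituent of }\RLG(\la)$.

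Consequently Theorem~A(a) alone is not enough: that statement concerns $e$-Jordan-cuspidal pairs, but the intermediate $(\bM_i,\mu_i)$ in a $\ll_e$-chain need not be $e$-Jordan-cuspidal. What the paper actually uses is the stronger Theorem~\ref{thm:all e-splits}, valid for an arbitrary $e$-split Levi $\bM$ and an arbitrary block $c$ of $\bM^F$: all constituents of $\RMG(\mu)$ for $\mu\in\Irr(c)\cap\cE(\bM^F,\ell')$ land in a single block of $\bG^F$. Applying this step by step along the chain propagates the block from $\bM_r$ up to $\bG$, and that is what forces $\chi\in\Irr(b)$ (for the reverse inclusion) and $(\bM_r,\mu_r)\in\cL(b)$ (for the forward one). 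Incidentally, the existence of an $e$-Jordan-cuspidal predecessor is much more elementary than you suggest: if $\chi$ is not $e$-Jordan-cuspidal then it is not $e$-cuspidal, so $\sRMG(\chi)\ne0$ for some proper $e$-split $\bM$, and one simply iterates---no appeal to Jordan decomposition or to unipotent $e$-Harish-Chandra theory in the dual centraliser is needed.
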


Note that at present, it is not known whether Lusztig induction $\RLG$ is
independent
of the parabolic subgroup containing the Levi subgroup $\bL$ used to define it.
Our proofs will show, though, that in our case $b_{\bG^F}(\bL,\la)$ is defined
unambiguously. \par

An important motivation for this work comes from the recent reductions of
most long-standing famous conjectures in modular representation theory of
finite groups to questions about quasi-simple groups. Among the latter, the
quasi-simple groups of Lie type form the by far most important part. A
knowledge and suitable inductive description of the $\ell$-blocks of these
groups is thus of paramount importance for an eventual proof of those
central conjectures. Our results are specifically tailored for use in an
inductive approach by considering groups that occur as Levi subgroups inside
groups of Lie type of simply connected type, that is, inside quasi-simple
groups.
\medskip

Our paper is organised as follows; in Section~\ref{sec:cusp}, we set up
$e$-Jordan (quasi-central) cuspidal pairs and
discuss some of their properties. In Section~\ref{sec:main} we prove
Theorem~A (see Theorem~\ref{thm:ecusp-blocks}) on parametrising $\ell$-blocks
by $e$-Jordan-cuspidal and $e$-Jordan quasi-central cuspidal pairs and
Theorem~B (see Theorem~\ref{thm:all covered}) on characterising
$\ell'$-characters in blocks. The crucial case turns out to be when $\ell=3$.
In particular, the whole Section~\ref{subsec:An} is devoted to the situation
of extra-special defect groups of order~27, excluded in \cite{CE99}, which
eventually turns out to behave just as the generic case. An important
ingredient of Section~\ref{sec:main} is
Theorem~\ref{thm:all e-splits}, which shows that the distribution of
$\ell'$-characters in $\ell$-blocks is preserved under Lusztig induction
from $e$-split Levi subgroups. Finally, in Section~\ref{sec:Jordan} we collect
some results relating $e$-Jordan-cuspidality and usual $e$-cuspidality.

\section{Cuspidal pairs}   \label{sec:cusp}

Throughout this section, $\bG$ is a connected reductive linear algebraic group
over the algebraic closure of a finite field of characteristic~$p$, and
$F:\bG\rightarrow\bG$ is a Frobenius endomorphism endowing $\bG$ with an
$\FF_q$-structure for some power $q$ of~$p$. By $\bG^*$ we denote a group
in duality with $\bG$ with respect to some fixed $F$-stable maximal torus
of $\bG$, with corresponding Frobenius endomorphism also denoted by $F$.

\subsection{$e$-Jordan-cuspidality}
Let $e$ be a positive integer. We will make use of the terminology of Sylow
$e$-theory (see e.g.~\cite{BMM}). For an $F$-stable maximal torus $\bT$,
$\bT_e$ denotes its Sylow $e$-torus. Then a Levi subgroup $\bL\le\bG$ is
called \emph{$e$-split} if $\bL=C_\bG(\Zc(\bL)_e)$, and $\la\in\Irr(\bL^F)$
is called \emph{$e$-cuspidal} if $^*\!R_{\bM\le\bP}^\bL(\la)=0$ for all proper
$e$-split Levi subgroups $\bM<\bL$ and any parabolic subgroup $\bP$ of $\bL$
containing $\bM$ as Levi complement.
(It is expected that Lusztig induction is in fact independent of the ambient
parabolic subgroup. This would follow for example if the Mackey formula holds
for $\RLG$, and has been proved whenever $\bG^F$ does not have any component
of type $\tw2E_6(2),E_7(2)$ or $E_8(2)$, see \cite{BM11}. All the statements
made in this section using $\RLG$ are valid independent of the particular
choice of parabolic subgroup --- we will make clarifying remarks at points
where there might be any ambiguity.)

\begin{defn}
Let $s\in{\bG^*}^F$ be semisimple. Following \cite[1.3]{CE99} we say that
$\chi\in\cE(\bG^F,s)$ is \emph{$e$-Jordan-cuspidal}, or \emph{satisfies
condition~(J)} with respect to some $e\ge1$ if
\begin{enumerate}
\item[(J$_1$)] $\Zc(C_{\bG^*}^\circ(s))_e=\Zc(\bG^*)_e$ and
\item[(J$_2$)] $\chi$ corresponds under Jordan decomposition to the
 $C_{\bG^*}(s)^F$-orbit of an $e$-cuspidal unipotent character of
 $C_{\bG^*}^\circ(s)^F$.
\end{enumerate}
If $\bL\le\bG$ is $e$-split and $\la\in\Irr(\bL^F)$ is $e$-Jordan-cuspidal,
then $(\bL,\la)$ is called an \emph{$e$-Jordan-cuspidal pair}.
\end{defn}

It is shown in \cite[Prop.~1.10]{CE99} that $\chi$ is $e$-Jordan-cuspidal if
and only if it satisfies the \emph{uniform criterion}
$$\text{(U): for every $F$-stable maximal torus $\bT\le\bG$ with
  $\bT_e\not\le Z(\bG)$ we have $\sRTG(\chi)=0$}.$$

\begin{rem}   \label{rem:KM}
By \cite[Prop.~1.10(ii)]{CE99} it is known that $e$-cuspidality implies
$e$-Jordan-cuspi\-dality; moreover $e$-Jordan-cuspidality and
$e$-cuspidality agree at least in the following situations:
\begin{enumerate}
 \item when $e=1$;
 \item for unipotent characters (see \cite[Cor.~3.13]{BMM});
 \item for characters lying in an $\ell'$-series where $\ell$ is good
  for $\bG$ and either $\ell\ge5$ or $\ell=3\in\Gamma(\bG,F)$ (see
  \cite[Thm.~4.2 and~Rem.~5.2]{CE99}); and
 \item for characters lying in a quasi-isolated $\ell'$-series of
  an exceptional type simple group for $\ell$ a bad prime (this follows by
  inspection of the explicit results in \cite{KM}).
\end{enumerate}
To see the the first point, assume that $\chi$ is 1-Jordan-cuspidal. Suppose
if possible that $\chi$ is not 1-cuspidal. Then there exists a proper 1-split
Levi subgroup $\bL$ of $\bG$ such that $\sRLG(\chi)$ is non-zero. Then
$\sRLG(\chi)(1)\ne 0$ as $\sRLG$ is ordinary Harish-Chandra restriction.
Hence the projection of $\sRLG(\chi)$ to the space of uniform functions of
$\bL^F$ is non-zero in contradiction to the uniform criterion (U).

It seems reasonable to expect (and that is formulated as a conjecture in
\cite[1.11]{CE99}) that $e$-cuspidality and $e$-Jordan-cuspidality agree
in general.
See Section~\ref{sec:Jordan} below for a further discussion of this.
\end{rem}

We first establish conservation of $e$-Jordan-cuspidality under some
natural constructions:

\begin{lem}   \label{lem:derived-Jcusp}
 Let $\bL$ be an $F$-stable Levi subgroup of $\bG$ and $\la\in\Irr(\bL^F)$.
 Let $\bL_0 =\bL\cap [\bG,\bG]$ and let $\la_0$ be an irreducible constituent
 of $\Res^{\bL^F}_{\bL_0^F}(\la)$. Let $e\ge1$.
 Then $(\bL,\la)$ is an $e$-Jordan-cuspidal pair for $\bG$ if and only if
 $(\bL_0,\la_0)$ is an $e$-Jordan-cuspidal pair for $[\bG,\bG]$.
\end{lem}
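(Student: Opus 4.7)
The plan is to verify separately that $\bL$ is $e$-split in $\bG$ iff $\bL_0$ is $e$-split in $[\bG,\bG]$, and that $\la$ is $e$-Jordan-cuspidal iff $\la_0$ is.

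First some bookkeeping: since $\bG=\Zc(\bG)\cdot[\bG,\bG]$ is an almost direct product with central torus $\Zc(\bG)$, any $F$-stable Levi subgroup $\bL\le\bG$ decomposes as $\bL=\Zc(\bG)\cdot\bL_0$ with $\bL_0$ an $F$-stable Levi of $[\bG,\bG]$; every $F$-stable maximal torus of $\bL$ has the form $\bT=\Zc(\bG)\cdot\bT_0$ with $\bT_0=\bT\cap\bL_0$ maximal in $\bL_0$; and $\Zc(\bL)=\Zc(\bG)\cdot\Zc(\bL_0)$. The $e$-split equivalence follows immediately: since $\Zc(\bG)_e$ is central in $\bG$ and $\Zc(\bL)_e=\Zc(\bG)_e\cdot\Zc(\bL_0)_e$, one has $C_\bG(\Zc(\bL)_e)=\Zc(\bG)\cdot C_{[\bG,\bG]}(\Zc(\bL_0)_e)$, which coincides with $\bL=\Zc(\bG)\cdot\bL_0$ precisely when $C_{[\bG,\bG]}(\Zc(\bL_0)_e)=\bL_0$.

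For the character condition I would apply the uniform criterion~(U) from \cite[Prop.~1.10]{CE99}: $\la$ is $e$-Jordan-cuspidal iff $\sRTL(\la)=0$ for every $F$-stable maximal torus $\bT\le\bL$ with $\bT_e\not\le Z(\bL)$. The factorisations above together with $Z(\bL)=\Zc(\bG)\cdot Z(\bL_0)$ imply that $\bT_e\not\le Z(\bL)$ iff $(\bT_0)_e\not\le Z(\bL_0)$, so the problem reduces to showing that $\sRTL(\la)=0$ iff ${}^*\!R_{\bT_0}^{\bL_0}(\la_0')=0$ for some (equivalently every) $\bL^F$-conjugate $\la_0'$ of $\la_0$; since the constituents of $\Res^{\bL^F}_{\bL_0^F}\la$ form a single $\bL^F$-orbit and $e$-Jordan-cuspidality is invariant under $\bL^F$-conjugation, this amounts to the stated condition on $\la_0$.

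The main obstacle is this last equivalence, namely compatibility of Lusztig restriction with passage to the derived subgroup. Here I would use that $\Zc(\bG)$ is central in $\bL$, so the Deligne--Lusztig variety attached to $\bT\le\bL$ coincides with the one attached to $\bT_0\le\bL_0$ up to the (trivial) action of $\Zc(\bG)^F$, giving a restriction formula
$$\Res^{\bL^F}_{\bL_0^F}\bigl(R_\bT^\bL(\theta)\bigr)=R_{\bT_0}^{\bL_0}\bigl(\Res^{\bT^F}_{\bT_0^F}\theta\bigr)\quad\text{for all }\theta\in\Irr(\bT^F).$$
Adjunction with respect to the induction pairing then yields the dual statement for $\sRTL$, and Clifford theory for the abelian quotient $\bL^F/\bL_0^F$ converts the vanishing of $\sRTL(\la)$ into simultaneous vanishing of ${}^*\!R_{\bT_0}^{\bL_0}$ on the $\bL^F$-orbit of $\la_0$, completing the argument.
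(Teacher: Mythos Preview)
Your argument is correct, but the paper takes a different and considerably shorter route. Instead of working directly with the uniform criterion~(U) and analysing the compatibility of $\sRTL$ with restriction to $\bL_0^F$, the paper fixes a regular embedding $\iota:\bG\hookrightarrow\tbG$ and invokes the proof of \cite[Prop.~1.10]{CE99}, which already shows that condition~(J) for $\bG$ is equivalent to condition~(J) for $\tbG$. Since $\iota$ restricts to a regular embedding $[\bG,\bG]\hookrightarrow\tbG$, the same citation gives the equivalence for $[\bG,\bG]$ and $\tbG$, and the two equivalences compose. The $e$-split equivalence is dispatched in one line, exactly as you do.

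What each approach buys: the paper's argument is a two-line reduction to an existing result and sidesteps all explicit computation with Deligne--Lusztig varieties and Clifford theory; it also uses condition~(J) directly rather than passing through~(U). Your approach is self-contained and makes the mechanism transparent --- in particular the restriction formula $\Res^{\bL^F}_{\bL_0^F}R_\bT^\bL(\theta)=R_{\bT_0}^{\bL_0}(\Res^{\bT^F}_{\bT_0^F}\theta)$ (which is \cite[Prop.~10.10]{B06}, already used elsewhere in the paper) is the real content --- but it requires more bookkeeping, notably the behaviour of Sylow $e$-tori under almost direct products and the Clifford-theoretic step at the end. Both are fine; the paper's is simply more economical given what is already available in \cite{CE99}.
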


\begin{proof}
Note that $\bL$ is $e$-split in $\bG$ if and only if $\bL_0$ is $e$-split in
$\bG_0$. Let $\iota:\bG\hookrightarrow\tbG$ be a regular embedding. It is
shown in the proof of \cite[Prop.~1.10]{CE99} that condition (J) with respect
to $\bG$ is equivalent to condition (J) with respect to $\tbG$. Since $\iota$
restricts to a regular embedding $[\bG,\bG]\hookrightarrow\tbG$, the
same argument shows that condition (J) with respect to $\tbG$ is
equivalent to that condition with respect to $[\bG,\bG]$.
\end{proof}

\begin{prop}   \label{prop:biject-Jcusp}
 Let $s\in {\bG^*}^F$ be semisimple, and $\bG_1\le\bG$ an $F$-stable Levi
 subgroup with $\bG_1^*$ containing
 $C_{\bG^*}(s)$. For $(\bL_1,\la_1)$ an $e$-Jordan-cuspidal pair of $\bG_1$
 below $\cE(\bG_1^F,s)$ define $\bL:=C_\bG(\Zc(\bL_1)_e)$ and
 $\la:=\eps_\bL\eps_{\bL_1}R_{\bL_1}^\bL(\la_1)$. Then
 $\Zc(\bL_1)_e=\Zc(\bL)_e$, and  $(\bL_1,\la_1)\mapsto(\bL,\la)$ defines a
 bijection $\Psi_{\bG_1}^\bG$ between the set of $e$-Jordan-cuspidal pairs
 of $\bG_1$ below $\cE(\bG_1^F,s)$ and the set of $e$-Jordan-cuspidal pairs
 of $\bG$ below $\cE(\bG^F,s)$.
\end{prop}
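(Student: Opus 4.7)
The plan rests on the Bonnaf\'e--Rouquier correspondence \cite{BR03}: under the hypothesis $C_{\bG^*}(s)\subseteq\bG_1^*$, and for any compatibly placed pair of Levi subgroups, it produces a bijection of Lusztig series realised by sign-corrected Lusztig induction and compatible with Jordan decomposition. The proposition will follow by matching the torus data $\Zc(\bL_1)_e=\Zc(\bL)_e$, applying Bonnaf\'e--Rouquier at the level of Levi subgroups, and transporting condition~(J) across the resulting bijection in both directions.

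First I verify the torus equality. Since $\bL_1=C_{\bG_1}(\Zc(\bL_1)_e)=\bL\cap\bG_1$ by the definition of $\bL$, we have $\bL_1\subseteq\bL$, hence $\Zc(\bL)_e\subseteq\Zc(\bL_1)_e$; conversely $\Zc(\bL_1)_e$ is a connected $e$-torus contained in $Z(\bL)$ by the very definition of $\bL$, so $\Zc(\bL_1)_e\subseteq\Zc(\bL)_e$. Thus $\bL=C_\bG(\Zc(\bL)_e)$ is $e$-split in $\bG$. The hypothesis dualises to $C_{\bL^*}(s)=C_{\bG^*}(s)\cap\bL^*\subseteq\bG_1^*\cap\bL^*=\bL_1^*$, so \cite{BR03} applied at the level of $\bL_1\le\bL$ gives a bijection $\cE(\bL_1^F,s)\to\cE(\bL^F,s)$ sending $\la_1$ to $\la=\eps_\bL\eps_{\bL_1}R_{\bL_1}^\bL(\la_1)\in\Irr(\bL^F)$. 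Condition~(J$_1$) for $(\bL,\la)$ is then immediate from $C_{\bL^*}^\circ(s)=C_{\bL_1^*}^\circ(s)$, (J$_1$) for $(\bL_1,\la_1)$, and the dual of the torus equality. Condition~(J$_2$) uses compatibility of the Bonnaf\'e--Rouquier bijection with Jordan decomposition: $\la$ and $\la_1$ correspond to the same $C_{\bL^*}(s)^F$-orbit of unipotent characters of $C_{\bL^*}^\circ(s)^F$, which is $e$-cuspidal by hypothesis.

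For bijectivity, given $(\bL,\la)$ $e$-Jordan-cuspidal with $\la\in\cE(\bL^F,s)$ (up to conjugating $s$ into $\bL^*$), set $\bL_1:=\bL\cap\bG_1$ and let $\la_1$ be the Bonnaf\'e--Rouquier preimage of $\la$. The crucial observation is the chain of inclusions $\Zc(\bL^*)_e\subseteq\Zc(\bL_1^*)_e\subseteq\Zc(C_{\bL^*}^\circ(s))_e$, where the second containment follows because $\Zc(\bL_1^*)$ is a connected subgroup of $C_{\bL^*}(s)$ commuting with $C_{\bL^*}^\circ(s)\subseteq\bL_1^*$; combined with (J$_1$) for $(\bL,\la)$, which says $\Zc(C_{\bL^*}^\circ(s))_e=\Zc(\bL^*)_e$, this forces all three $e$-tori to coincide, so dually $\Zc(\bL_1)_e=\Zc(\bL)_e$ and $\bL_1$ is $e$-split in $\bG_1$. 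The two assignments are then seen to be mutually inverse, and conservation of~(J) in this direction is the mirror image of the forward step. The main obstacle is the Jordan-decomposition compatibility of Bonnaf\'e--Rouquier invoked in~(J$_2$); this is the non-formal ingredient from \cite{BR03}, while the remainder is a clean book-keeping exercise with $e$-split Levi subgroups and their dual connected centres.
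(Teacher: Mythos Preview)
Your argument is correct in outline but takes a genuinely different route from the paper's proof, and the difference is worth noting.

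The paper does not verify conditions~(J$_1$) and~(J$_2$) directly. Instead it exploits the equivalent \emph{uniform criterion}~(U) established in \cite[Prop.~1.10]{CE99}: a character $\la$ is $e$-Jordan-cuspidal if and only if $\sRTL(\la)=0$ for every $F$-stable maximal torus $\bT\le\bL$ with $\bT_e\not\le Z(\bL)$. This reduces the check to computing $\sRTL R_{\bL_1}^\bL(\la_1)$ via the Mackey formula, which is known unconditionally when one of the two Levi subgroups is a maximal torus (Deligne--Lusztig, see \cite[Thm.~(2)]{BM11}). The same device handles the inverse direction.

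Your approach through~(J$_1$) and~(J$_2$) is conceptually cleaner---once you observe $C_{\bL^*}^\circ(s)=C_{\bL_1^*}^\circ(s)$ and the dual torus equality, the conditions transfer almost tautologically---but the price is the appeal to ``Jordan-decomposition compatibility of Bonnaf\'e--Rouquier'' for~(J$_2$). You flag this honestly as the non-formal ingredient, and it is true, but it is not quite a statement one can simply cite from \cite{BR03}; it requires knowing that $\la_1$ and $\eps_\bL\eps_{\bL_1}R_{\bL_1}^\bL(\la_1)$ have the same Jordan correspondent, which in the disconnected-centre setting (orbits of unipotent characters rather than single characters) needs some care. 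The paper's route via~(U) sidesteps this entirely.

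Two minor points you pass over quickly: in the forward direction, the inclusion $\Zc(\bL)_e\subseteq\Zc(\bL_1)_e$ uses that $\Zc(\bL)$ lies in every maximal torus of $\bL$, hence in $\bL_1$ (which contains a maximal torus of $\bG_1$ and thus of $\bL$); and in the inverse direction you should say why $\bL_1=\bL\cap\bG_1$ is a Levi subgroup at all (the paper does this by exhibiting a common maximal torus via $s\in\bL^*$ and $C_{\bG^*}(s)\le\bG_1^*$). Neither is a real obstacle, but both deserve a sentence.
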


We note that the character $\la $ and hence the bijection $\Psi_{\bG_1}^\bG$
above are independent of the choice of parabolic subgroup. This is explained
in the proof below.

\begin{proof}
We first show that $\Psi_{\bG_1}^\bG$ is well-defined. Let $(\bL_1,\la_1)$ be
$e$-Jordan-cuspidal in $\bG_1$ below $\cE(\bG_1^F,s)$, so $s\in\bL_1^*$. Then
$\bL^*:=C_{\bG^*}(\Zc(\bL_1^*)_e)$ clearly is an $e$-split Levi subgroup of
$\bG^*$. Moreover we have
$$\bL_1^*=C_{\bG_1^*}(\Zc(\bL_1^*)_e)=C_{\bG^*}(\Zc(\bL_1^*)_e)\cap \bG_1^*
  =\bL^*\cap\bG_1^*.$$
Now $s\in\bL_1^*$ by assumption, so
$$\bL_1^*=\bL^*\cap\bG_1^*\ge \bL^*\cap C_{\bG^*}(s)=C_{\bL^*}(s).$$
In particular, $\bL_1^*$ and $\bL^*$ have a maximal torus in common, so
$\bL_1^*$ is a Levi subgroup of $\bL^*$. Thus, passing to duals, $\bL_1$ is
a Levi subgroup of $\bL=C_\bG(\Zc(\bL_1)_e)$.   \par
We clearly have $\Zc(\bL_1)_e\le \Zc(\bL)_e$. For the reverse inclusion note
that $\Zc(\bL)_e\le\bL_1$, as $\bL_1$ is a Levi subgroup in $\bL$, so indeed
$\Zc(\bL)_e\le \Zc(\bL_1)_e$. \par
Hence $\la:=\eps_\bL\eps_{\bL_1}R_{\bL_1}^\bL(\la_1)$ is irreducible since,
as we saw above, $\bL_1^*\ge C_{\bL^*}(s)$. By \cite[Rem.~13.28]{DM91},
$\la$ is independent of the choice of parabolic
subgroup of $\bL$ containing $\bL_1$ as Levi subgroup. Let's argue
that $\la$ is $e$-Jordan-cuspidal. Indeed, for any $F$-stable maximal torus
$\bT\le\bL$ we have by the Mackey-formula (which holds as one of the Levi
subgroups is a maximal torus by a result of Deligne--Lusztig, see
\cite[Thm.(2)]{BM11}) that
$\eps_\bL\eps_{\bL_1}\sRTL(\la)={}\sRTL R_{\bL_1}^\bL(\la_1)$ is a sum of
$\bL^F$-conjugates of ${}^*\!R_{\bT}^{\bL_1}(\la_1)$. As $\la_1$ is
$e$-Jordan-cuspidal, this vanishes if $\bT_e\not\le \Zc(\bL_1)_e=\Zc(\bL)_e$.
So $\la$ satisfies condition~(U), hence is $e$-Jordan-cuspidal, and
$\Psi_{\bG_1}^\bG$ is well-defined.
\par
It is clearly injective, since if
$(\bL,\la)=\Psi_{\bG_1}^\bG(\bL_2,\la_2)$ for some $e$-cuspidal pair
$(\bL_2,\la_2)$ of $\bG_1$, then $\Zc(\bL_1)_e=\Zc(\bL)_e=\Zc(\bL_2)_e$, whence
$\bL_1=C_{\bG_1}(\Zc(\bL_1)_e)=C_{\bG_1}(\Zc(\bL_2)_e)=\bL_2$, and then the
bijectivity of $R_{\bL_1}^\bL$ on $\cE(\bL_1^F,s)$ shows that $\la_1=\la_2$ as
well. \par
We now construct an inverse map. For this let $(\bL,\la)$ be an
$e$-Jordan-cuspidal pair of $\bG$ below $\cE(\bG^F,s)$, and $\bL^*\le\bG^*$
dual to $\bL$. Set
$$\bL_1^*:=C_{\bG_1^*}(\Zc(\bL^*)_e)=C_{\bG^*}(\Zc(\bL^*)_e)\cap\bG_1^*
  =\bL^*\cap\bG_1^*,$$
an $e$-split Levi subgroup of $\bG_1^*$. Note that $s\in\bL^*$, so there
exists some maximal torus $\bT^*$ of $\bG^*$ with
$\bT^*\le C_{\bG^*}(s)\le\bG_1^*$, whence $\bL_1^*$ is a Levi subgroup of
$\bL^*$. Now again
$$\bL_1^*=\bL^*\cap\bG_1^*\ge\bL^*\cap C_{\bG^*}(s)=C_{\bL^*}(s).$$
So the dual $\bL_1:=C_{\bG_1}(\Zc(\bL)_e)$ is a Levi subgroup of $\bL$ such
that $\eps_{\bL_1}\eps_\bL R_{\bL_1}^\bL$ preserves irreducibility on
$\cE(\bL_1^F,s)$. We define $\la_1$ to be the unique constituent of
$^*R_{\bL_1}^\bL(\la)$ in the series $\cE(\bL_1^F,s)$. Then $\la_1$ is
$e$-Jordan-cuspidal. Indeed, for any $F$-stable maximal torus $\bT\le\bL_1$
with $\bT_e\not\le \Zc(\bL)_e=\Zc(\bL_1)_e$ we get that
${}^*\!R_\bT^{\bL_1}(\la_1)$ is a constituent of $\sRTL(\la)=0$ by
$e$-Jordan-cuspidality of $\la$. Here note that the set of constituents of
${}^*\!R_\bT^{\bL_1}(\eta)$, where $\eta$ is a constituent of
${}^*\!R_{\bL_1}^{\bL}(\la)$ different from $\la_1$ is disjoint from the set
of irreducible constituents of ${}^*\!R_\bT^{\bL_1}(\la_1)$.
\par
Thus we have obtained a well-defined map $^*\Psi_{\bG_1}^\bG$ from
$e$-Jordan-cuspidal pairs in $\bG$ to $e$-Jordan-cuspidal pairs in $\bG_1$,
both below the series $s$.
As the map $\Psi_{\bG_1}^\bG$ preserves the $e$-part of the center,
$^*\Psi_{\bG_1}^\bG\circ\Psi_{\bG_1}^\bG$ is the identity. It remains to prove
that $\Psi_{\bG_1}^\bG$ is surjective. For this let $(\bM,\mu)$ be any
$e$-Jordan-cuspidal pair of $\bG$ below $\cE(\bG^F,s)$, let
$(\bL_1,\la_1)={}^*\Psi_{\bG_1}^\bG(\bM,\mu)$ and
$(\bL,\la)=\Psi_{\bG_1}^\bG(\bL_1,\la_1)$. Then we have
$\Zc(\bM)_e\le \Zc(\bL_1)_e=\Zc(\bL)_e$, so
$\bL=C_\bG(\Zc(\bL)_e)\le C_\bG(\Zc(\bM)_e)=\bM$ is an $e$-split Levi subgroup
of $\bM$. As $\bL_1\le\bL\le\bM$ and $\eps_{\bL_1}\eps_\bM R_{\bL_1}^\bM$ is a
bijection from $\cE(\bL_1^F,s)$ to $\cE(\bM^F,s)$, it follows that
$\eps_\bL\eps_\bM R_\bL^\bM$ is a bijection between $\cE(\bL^F,s)$ and
$\cE(\bM^F,s)$. As $\la$ and $\mu$ are $e$-Jordan-cuspidal, (J$_1$) implies
that $\Zc(\bM^*)_e=\Zc(\bL^*)_e$, so $\bM=\bL$, that is,
$(\bM,\mu)$ is in the image of $\Psi_{\bG_1}^\bG$.
The proof is complete.
\end{proof}

The above bijection also preserves relative Weyl groups.

\begin{lem}   \label{lem:biject-weyl}
 In the situation and notation of Proposition~\ref{prop:biject-Jcusp} let
 $(\bL,\la)=\Psi_{\bG_1}^\bG(\bL_1,\la_1)$. Then
 $N_{\bG_1^F}(\bL_1,\la_1)\le N_{\bG^F}(\bL,\la)$ and this
 inclusion induces an isomorphism of relative Weyl groups
 $W_{\bG_1^F}(\bL_1,\la_1)\cong W_{\bG^F}(\bL,\la)$.
\end{lem}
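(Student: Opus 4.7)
First I would verify the inclusion $N_{\bG_1^F}(\bL_1,\la_1)\le N_{\bG^F}(\bL,\la)$. Any $n\in N_{\bG_1^F}(\bL_1)$ normalizes $\Zc(\bL_1)_e$ and hence normalizes $\bL=C_\bG(\Zc(\bL_1)_e)$. Equivariance of Lusztig induction under such $n$ then yields $\tw{n}\la=\eps_\bL\eps_{\bL_1}R_{\bL_1}^\bL(\tw{n}\la_1)$, so $\tw{n}\la_1=\la_1$ forces $\tw{n}\la=\la$. Injectivity of the induced map $W_{\bG_1^F}(\bL_1,\la_1)\to W_{\bG^F}(\bL,\la)$ is then immediate: the proof of Proposition~\ref{prop:biject-Jcusp} shows $\bL_1=C_{\bG_1}(\Zc(\bL_1)_e)=\bL\cap\bG_1$, so $\bL^F\cap\bG_1^F=\bL_1^F$ and hence $N_{\bG_1^F}(\bL_1,\la_1)\cap\bL^F=\bL_1^F$.

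The real content is surjectivity, for which I would pass to the dual group. Let $w=n\bL^F\in W_{\bG^F}(\bL,\la)$. Under the standard isomorphism $W_{\bG^F}(\bL)\cong W_{\bG^{*F}}(\bL^*)$, the action of $W_{\bG^F}(\bL)$ on Lusztig series of $\bL^F$ corresponds to the permutation action of $W_{\bG^{*F}}(\bL^*)$ on $\bL^{*F}$-conjugacy classes of semisimple elements of $\bL^*$. Because $\tw{n}\la=\la\in\cE(\bL^F,s)$, the image $w^*\in W_{\bG^{*F}}(\bL^*)$ of $w$ stabilizes the $\bL^{*F}$-class of $s$, so $w^*$ admits a representative $m^*\in N_{\bG^{*F}}(\bL^*)$ fixing $s$ pointwise. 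Then $m^*\in C_{\bG^*}(s)^F\le\bG_1^{*F}$ by hypothesis, and since $m^*$ normalizes both $\bG_1^*$ and $\bL^*$ it normalizes $\bL^*\cap\bG_1^*=\bL_1^*$; thus $m^*\in N_{\bG_1^{*F}}(\bL_1^*)$. The compatibility of duality with Levi substructures yields a commutative square relating the inclusions $W_{\bG_1^F}(\bL_1)\hookrightarrow W_{\bG^F}(\bL)$ and $W_{\bG_1^{*F}}(\bL_1^*)\hookrightarrow W_{\bG^{*F}}(\bL^*)$, placing $w$ in the image of the primal inclusion. Picking a representative $n_1\in N_{\bG_1^F}(\bL_1)$ of $w$, the identity $R_{\bL_1}^\bL(\tw{n_1}\la_1)=\tw{n_1}\la=\la=R_{\bL_1}^\bL(\la_1)$ combined with injectivity of $R_{\bL_1}^\bL$ on $\cE(\bL_1^F,s)$ forces $\tw{n_1}\la_1=\la_1$, so $n_1\in N_{\bG_1^F}(\bL_1,\la_1)$ maps to $w$.

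The main obstacle is precisely the surjectivity argument: one must convert the abstract hypothesis that some coset of $\bL^F$ stabilizes $\la$ into the existence of a representative that literally normalizes $\bL_1$. The key leverage is the duality between $\bG$ and $\bG^*$ together with the standing assumption $C_{\bG^*}(s)\le\bG_1^*$, which places the relevant dual representative inside $\bG_1^{*F}$ and lets one transfer the conclusion back through the compatibility of dual Levi substructures with relative Weyl groups.
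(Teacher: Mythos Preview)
Your overall strategy coincides with the paper's: both exploit $C_{\bG^*}(s)\le\bG_1^*$ to show that any $w\in W_{\bG^F}(\bL,\la)$ is represented by an element of $\bG_1^F$. The technical realizations differ. The paper stays on the group side: it picks an $F$-stable maximal torus $\bT\le\bL_1$ and $\theta\in\Irr(\bT^F)$ with $\la_1$ a constituent of $R_\bT^{\bL_1}(\theta)$; since $\tw g(\bT,\theta)$ and $(\bT,\theta)$ are geometrically conjugate in $\bL$ (both produce $\la$), a geometric conjugator $l\in\bL$ satisfies $lg\in\bG_1$ by \cite[Lemma~7.5]{KM}, and a Lang--Steinberg argument inside $\bL_1$ replaces $g$ by an $\bL^F$-translate lying in $\bG_1^F$. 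Your route via the dual relative Weyl group avoids the Lang--Steinberg step and is more structural.

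Two points in your argument need more care. First, the ``commutative square'' relating the duality isomorphisms $W_{\bG^F}(\bL)\cong W_{\bG^{*F}}(\bL^*)$ and $W_{\bG_1^F}(\bL_1)\cong W_{\bG_1^{*F}}(\bL_1^*)$ to the two inclusions is true but not automatic; you should justify it, e.g.\ by fixing a common $F$-stable maximal torus $\bT\le\bL_1$ and its dual $\bT^*\le\bL_1^*$, so that all four relative Weyl groups are subquotients of $W_\bG(\bT)^F\cong W_{\bG^*}(\bT^*)^F$ in a compatible way. Second, your final step invokes injectivity of $R_{\bL_1}^\bL$ on $\cE(\bL_1^F,s)$, which presupposes $\tw{n_1}\la_1\in\cE(\bL_1^F,s)$; this does follow from your construction (since $m^*$ centralizes $s$, the coset $m^*\bL_1^{*F}$ fixes the $\bL_1^{*F}$-class of $s$, so via the same duality correspondence $n_1$ stabilizes the series $\cE(\bL_1^F,s)$), but you should say so. The paper confronts the analogous issue and spends its final paragraph verifying that the adjusted $g$ keeps $\tw g\la_1$ in the correct rational series; in your framework this verification is actually shorter, precisely because you arranged $m^*\in C_{\bG^*}(s)$ from the start.
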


\begin{proof}
Let $g\in N_{\bG_1^F}(\bL_1,\la_1)$. Then $g$ normalises $\Zc(\bL_1)_e$ and
hence also  $\bL=C_\bG(\Zc(\bL_1)_e)$. Thus,
$$\,^g\la = \eps_{\bL_1}\eps_\bL R_{\,^g\bL_1}^{\,^g\bL} (\,^g\la_1)
          = \eps_{\bL_1}\eps_\bL R_{\bL_1}^\bL(\la_1) =\la $$
and the first assertion follows.

For the second assertion let $g\in N_{\bG^F} (\bL, \la)$ and let
$\bT$ be an $F$-stable maximal torus of $\bL_1$ and $\theta$ an irreducible
character of $\bT^F$ such that $\la_1$ is a constituent of
$R_{\bT}^{\bL_1}(\theta)$. Since $\la_1\in \cE(\bL_1^F, s)$, $(\bT, \theta)$
corresponds via duality (between $\bL_1$ and $\bL_1^*$) to the
$\bL_1^{*F}$-class of $s$, and all constituents of $R_{\bT}^{\bL_1}(\theta)$
are in $\cE(\bL_1^F, s)$. Consequently, $R_{\bL_1}^\bL$ induces a bijection
between the set of constituents of  $R_{\bT}^{\bL_1} (\theta)$  and the
set of constituents of  $R_{\bT}^\bL (\theta)$. In particular, $\la$ is
a constituent of $R_{\bT}^\bL (\theta)$. Since $g$ stabilises
$\la$, $\la$ is also a constituent of $R_{\,^g\bT}^\bL (\,^g\theta) $.
Hence $(\bT, \theta)$  and $\,^g(\bT, \theta)$  are geometrically conjugate
in $\bL$. Let $l\in\bL$ geometrically conjugate $\,^g(\bT,\theta)$
to $(\bT, \theta)$.
Since $C_{\bG^*}(s) \leq  \bG_1^*$, $lg\in \bG_1$ (see for instance
\cite[Lemma~7.5]{KM}). Hence $F(l)l^{-1}= F(lg)(lg)^{-1}\in\bG_1\cap\bL=\bL_1$.
By the Lang--Steinberg theorem applied to $\bL_1$, there exists
$l_1\in \bL_1$ such that $l_1l \in \bL^F $. Also, since $l_1 \in\bG_1$ and
$g \in \bG^F$, $l_1lg \in \bG_1^F$.
Thus, up to replacing $g$ by $l_1lg$, we may assume that $g\in\bG_1^F$.

Since $\bL_1=C_{\bG_1}(\Zc(\bL)_e)$, it follows that $g\in N_{\bG_1^F}(\bL_1)$,
and thus
$$\eps_{\bL_1}\eps_\bL R_{\bL_1}^\bL(\la_1) =\la = \,^g\la
   =\eps_{\bL_1}\eps_\bL R_{\bL_1}^\bL (\,^g \la_1).$$
Since $R_{\bL_1}^\bL$ induces a bijection between the set of characters in
the geometric Lusztig series of $\bL_1^F$ corresponding to $s$ (the union of
series $\cE (\bL_1^F,t)$, where $t$ runs over the semisimple elements of
$\bL_1^{*F}$ which are $\bL_1$-conjugate to $s$) and the set of characters
in the geometric Lusztig series of $\bL^F$ corresponding to $s$, it suffices
to prove that $\,^g\la_1 \in \cE(\bL_1^F,t)$ for some $t\in \bL_1^{*F}$
which is ${\bL_1^*}^F$-conjugate to $s$.
Let $\bT $, $\theta$ and $l$ be as above. Since $lg\in\bG_1$ and
$g\in\bG_1$, it follows that $l\in\bG_1\cap \bL=\bL_1$. Hence
$\,^g(\bT,\theta)$ and $(\bT,\theta)$ are geometrically conjugate in $\bL_1$.
The claim follows as $\,^g\lambda_1$ is a constituent of
$R_{^g\bT}^{\bL_1}({^g\theta})$.
\end{proof}

\subsection{$e$-Jordan-cuspidality and $\ell$-blocks}

We next investigate the behaviour of $\ell$-blocks with respect to the
map $\Psi_{\bG_1}^\bG$. For this, let $\ell\ne p$ be a prime. We set
$$e_\ell(q):=\text{order of $q$ modulo }\begin{cases} \ell&
    \text{if }\ell\ne2\\   4& \text{if }\ell=2.\end{cases}$$
For a semisimple $\ell'$-element $s$ of $\bG^{*F}$, we denote by
$\cE_\ell(\bG^F,s)$ the union of all Lusztig series $\cE(\bG^F,st)$, where 
$t\in \bG^{*F}$ is an $\ell$-element commuting with $s$. We recall that the
set $\cE_\ell(\bG^F, s)$ is a union of $\ell$-blocks. Further, if
$\bG_1\le\bG$ is an $F$-stable Levi subgroup  such that $\bG_1^*$ contains
$C_{\bG^*}(s)$, then $\epsilon_{\bG_1}\epsilon_\bG R_{\bG_1}^\bG$ induces a
bijection, which we refer to as the Bonnaf\'e--Rouquier correspondence,
between the $\ell$-blocks in $\cE(\bG_1^F,s)$ and the $\ell$-blocks in
$\cE(\bG^F, s)$.

\begin{prop}   \label{prop:biject-Jcusp2}
 Let $\ell\ne p$ be a prime, $s\in {\bG^*}^F$ an $\ell'$-element and
 $\bG_1\le\bG$ an $F$-stable Levi subgroup with
 $\bG_1^*$ containing $C_{\bG^*}(s)$. Assume that $b$ is an $\ell$-block in
 $\cE_\ell(\bG^F,s)$, and $c$ is its Bonnaf\'e--Rouquier correspondent in
 $\cE_\ell(\bG_1^F,s)$. Let $e:=e_\ell(q)$.
 \begin{itemize}
  \item[\rm(a)] Let $(\bL_1,\la_1)$ be $e$-Jordan-cuspidal in $\bG_1$ and set
   $(\bL,\la)=\Psi_{\bG_1}^\bG(\bL_1,\la_1)$. If all constituents of
   $R_{\bL_1}^{\bG_1}(\la_1)$ lie in $c$, then all constituents of $\RLG(\la)$
   lie in $b$.
  \item[\rm(b)] Let $(\bL,\la)$ be $e$-Jordan-cuspidal in $\bG$ and set
   $(\bL_1,\la_1)={}^*\Psi_{\bG_1}^\bG(\bL,\la)$. If all constituents of
   $\RLG(\la)$ lie in $b$, then all constituents of $R_{\bL_1}^{\bG_1}(\la_1)$
   lie in $c$.
 \end{itemize}
\end{prop}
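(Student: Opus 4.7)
The plan is to reduce both parts to a single identity expressing $\RLG(\la)$ in terms of Lusztig induction inside $\bG_1$ (up to sign), and then apply the Bonnaf\'e--Rouquier correspondence character by character. By the construction of $\Psi_{\bG_1}^\bG$ in Proposition~\ref{prop:biject-Jcusp}, $\bL_1$ is a Levi subgroup of $\bL$, one has the two Levi chains $\bL_1\le\bL\le\bG$ and $\bL_1\le\bG_1\le\bG$, and $\la=\eps_\bL\eps_{\bL_1}R_{\bL_1}^\bL(\la_1)$. Choosing compatible parabolic subgroups along both chains, transitivity of Lusztig induction yields
$$\RLG(\la)=\eps_\bL\eps_{\bL_1}R_{\bL_1}^\bG(\la_1)
         =\eps_\bL\eps_{\bL_1}R_{\bG_1}^\bG\bigl(R_{\bL_1}^{\bG_1}(\la_1)\bigr).$$
In part~(b) the same identity applies to the pair $(\bL_1,\la_1)={}^*\Psi_{\bG_1}^\bG(\bL,\la)$, because ${}^*\Psi_{\bG_1}^\bG$ is inverse to $\Psi_{\bG_1}^\bG$.

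Next I would invoke two standard facts. First, Lusztig induction is compatible with rational Lusztig series, so every irreducible constituent of $R_{\bL_1}^{\bG_1}(\la_1)$ lies in $\cE(\bG_1^F,s)$. Second, the Bonnaf\'e--Rouquier theorem provides that $\eps_{\bG_1}\eps_\bG R_{\bG_1}^\bG$ is a sign-corrected bijection of irreducible characters between $\cE(\bG_1^F,s)$ and $\cE(\bG^F,s)$, which restricts on blocks to the given bijection between $\cE_\ell(\bG_1^F,s)$ and $\cE_\ell(\bG^F,s)$, sending $c$ to $b$.

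For (a), expand $R_{\bL_1}^{\bG_1}(\la_1)=\sum_i n_i\chi_i$ with distinct irreducible $\chi_i$; by hypothesis each $\chi_i$ lies in $c\cap\cE(\bG_1^F,s)$. Under Bonnaf\'e--Rouquier each $R_{\bG_1}^\bG(\chi_i)$ is, up to sign, an irreducible character $\chi_i'\in b\cap\cE(\bG^F,s)$. The displayed identity then exhibits $\RLG(\la)$ as an integer combination of the $\chi_i'$, so every irreducible constituent of $\RLG(\la)$ lies in $b$. Part~(b) is the converse: because the $\chi_i'$ are pairwise distinct and $\RLG(\la)=\pm\sum_i n_i\chi_i'$, each $\chi_i'$ with $n_i\ne 0$ is an irreducible constituent of $\RLG(\la)$, hence lies in $b$ by hypothesis; applying the inverse of Bonnaf\'e--Rouquier returns $\chi_i\in c$.

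The only real technical point to nail down is the simultaneous compatibility of parabolic subgroup choices justifying the two transitivities $R_\bL^\bG\circ R_{\bL_1}^\bL=R_{\bL_1}^\bG=R_{\bG_1}^\bG\circ R_{\bL_1}^{\bG_1}$, together with the defining equation for $\la$. This fits naturally into the parabolic-independence framework already flagged in Section~\ref{sec:cusp}, and I expect it to be the place needing most care, but no input beyond the standard Deligne--Lusztig-type transitivity of $R$ should be required.
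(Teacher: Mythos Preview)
Your proposal is correct and follows essentially the same line as the paper's proof: both hinge on the transitivity identity $\eps_\bL\eps_{\bL_1}\RLG(\la)=R_{\bL_1}^\bG(\la_1)=R_{\bG_1}^\bG R_{\bL_1}^{\bG_1}(\la_1)$ and then read off the block membership of constituents via the Bonnaf\'e--Rouquier bijection. The paper phrases~(b) as a contrapositive (take a constituent $\eta$ of $R_{\bL_1}^{\bG_1}(\la_1)$ not in $c$ and push it forward), whereas you expand into irreducible constituents and pull back, but this is the same argument; your explicit remark that the $\chi_i'$ are pairwise distinct (so no cancellation occurs) makes transparent the step that the paper leaves implicit.
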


\begin{proof}
Note that the hypothesis of part~(a) means that for any parabolic subgroup
$\bP$ of $\bG_1$ containing $\bL_1$ as Levi subgroup all constituents of
$R_{\bL_1 \subset P}^{\bG_1}(\la_1)$ lie in $c$. A similar remark applies
to the conclusion, as well as to part~(b).

For (a), note that by the definition of $\Psi_{\bG_1}^\bG$ we have that
all constituents of
$$\eps_\bL\eps_{\bL_1}\RLG(\la)=R_{\bL_1}^\bG(\la_1)
   =R_{\bG_1}^\bG R_{\bL_1}^{\bG_1}(\la_1)$$
are contained in $R_{\bG_1}^\bG(c)$, hence in $b$ by Bonnaf\'e--Rouquier
correspondence. \par
In (b), suppose that $\eta$ is a constituent of $R_{\bL_1}^{\bG_1}(\la_1)$ not
lying in $c$. Then by Bonnaf\'e--Rouquier, $R_{\bG_1}^\bG(\eta)$ does not
belong to $b$, whence $R_{\bL_1}^\bG(\la_1)$ has a constituent not lying in
$b$, contradicting our assumption that all constituents of
$R_{\bL_1}^\bG(\la_1)=\RLG R_{\bL_1}^\bL(\la_1)=\eps_\bL\eps_{\bL_1}\RLG(\la)$
are in~$b$.
\end{proof}

\subsection{$e$-quasi-centrality}
For a prime $\ell$ not dividing $q$, we denote by $\cE(\bG^F,\ell')$ the set
of irreducible characters of $\bG^F$ lying in a Lusztig series $\cE(\bG^F,s)$,
where $s\in\bG^{*F}$ is a semisimple $\ell'$-element. Recall from
\cite[Def.~2.4]{KM} that a character $\chi\in\cE(\bG^F,\ell')$ is said to be of
\emph{central $\ell$-defect} if the $\ell$-block of $\bG^F$ containing $\chi$
has a central defect group and $\chi$ is said to be of
\emph{quasi-central $\ell$-defect} if some (and hence any)
character of $[\bG, \bG]^F$ covered by $\chi$ is of central $\ell$-defect.

\begin{lem}   \label{lem:ext-qcentral}
 Let $\bL$ be an $F$-stable Levi subgroup of $\bG$, and set
 $\bL_0 =\bL\cap[\bG,\bG]$. Let $\ell\ne p$ be a prime.
 \begin{enumerate}
 \item[\rm(a)] If $\bL_0 = C_{[\bG,\bG]}(Z(\bL_0)^F_\ell)$, then
   $\bL=C_\bG(Z(\bL)^F_\ell)$.
 \item[\rm(b)] Let $\la\in\cE(\bL^F,\ell')$ and let $\la_0$ be an irreducible
  constituent of $\Res^{\bL^F}_{\bL_0^F}(\la)$. Then $\la_0$ is of
  quasi-central $\ell$-defect if and only if $\la$ is of quasi-central
  $\ell$-defect.
 \end{enumerate}
\end{lem}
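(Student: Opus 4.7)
The plan rests on the decomposition $\bG = \Zc(\bG) \cdot [\bG,\bG]$, which together with $\bL_0 = \bL \cap [\bG,\bG]$ and $\Zc(\bG) \le \bL$ yields $\bL = \Zc(\bG) \cdot \bL_0$. Two facts will then drive both parts: (i) since $\Zc(\bG) \le Z(\bG)$ centralises $\bL_0$, we have $Z(\bL_0) \le Z(\bL)$, hence $Z(\bL_0)_\ell^F \le Z(\bL)_\ell^F$; (ii) since $\Zc(\bG)$ is central in $\bG$, one has $[\bL,\bL] = [\bL_0,\bL_0]$, and in particular $[\bL,\bL]^F = [\bL_0,\bL_0]^F$.

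For part (a), the hypothesis combined with (i) gives
$$C_{[\bG,\bG]}(Z(\bL)_\ell^F) \le C_{[\bG,\bG]}(Z(\bL_0)_\ell^F) = \bL_0.$$
Writing an arbitrary $g \in C_\bG(Z(\bL)_\ell^F)$ as $g = zh$ with $z \in \Zc(\bG)$ and $h \in [\bG,\bG]$, the element $h$ must still centralise $Z(\bL)_\ell^F$ (as $z$ is central in $\bG$), so $h \in \bL_0$, and hence $g \in \Zc(\bG) \cdot \bL_0 = \bL$. The reverse inclusion is trivial.

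For part (b), fact (ii) means the definitions of quasi-central $\ell$-defect for $\la$ and for $\la_0$ both refer to characters of the same finite group $[\bL,\bL]^F$. Applying Clifford theory to the chain $[\bL,\bL]^F \trianglelefteq \bL_0^F \trianglelefteq \bL^F$, the irreducible constituents of $\la|_{[\bL,\bL]^F}$ form a single $\bL^F$-orbit, and those of $\la_0|_{[\bL,\bL]^F}$ form an $\bL_0^F$-orbit contained in it. Since having central $\ell$-defect is invariant under $\bL^F$-conjugation on $\Irr([\bL,\bL]^F)$ (conjugation sends blocks to blocks together with their defect groups, and preserves the centre of $[\bL,\bL]^F$), the existence of a character of $[\bL_0,\bL_0]^F$ covered by $\la_0$ with central $\ell$-defect is equivalent to the analogous statement for $\la$, proving the equivalence.

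I do not anticipate any serious obstacle; the argument is essentially structural. The only point requiring a moment's care is the Clifford-theoretic comparison in (b), namely the observation that the characters of $[\bL,\bL]^F$ covered by $\la$ and those covered by $\la_0$ lie in a common $\bL^F$-orbit, so that $\bL^F$-invariance of central defect settles the biconditional.
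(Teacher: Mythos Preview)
Your proof of part~(a) is correct and essentially identical to the paper's: both use $\bL=\Zc(\bG)\bL_0$ together with $Z(\bL_0)^F_\ell\le Z(\bL)^F_\ell$ to sandwich $C_\bG(Z(\bL)^F_\ell)$ between $\bL$ and itself.

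For part~(b), your argument is correct but proceeds differently from the paper. The paper exploits the hypothesis $\la\in\cE(\bL^F,\ell')$ to deduce that the stabiliser of $\la_0$ in $\bL^F$ has $\ell'$-index and that $\la_0$ extends to it, yielding $\la(1)_\ell=\la_0(1)_\ell$; it then invokes \cite[Prop.~2.5(a)]{KM}, which characterises quasi-central $\ell$-defect numerically in terms of the $\ell$-part of the degree relative to $[\bL,\bL]^F$. Since $[\bL,\bL]=[\bL_0,\bL_0]$, the degree criterion transfers immediately. Your route bypasses both the degree computation and the external reference: you note that the irreducible constituents of $\la|_{[\bL,\bL]^F}$ and of $\la_0|_{[\bL,\bL]^F}$ lie in a common $\bL^F$-orbit of $\Irr([\bL,\bL]^F)$, and that central $\ell$-defect is preserved under $\bL^F$-conjugation. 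This is more self-contained and, notably, does not actually use that $\la$ lies in an $\ell'$-series, so your argument proves a slightly stronger statement. The paper's approach, on the other hand, extracts the equality $\la(1)_\ell=\la_0(1)_\ell$ as a byproduct, which can be useful elsewhere.
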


\begin{proof}
Since $\bG = \Zc(\bG)[\bG,\bG]$ and $\Zc(\bG) \leq \bL$, we have that
$\bL = \Zc(\bG)\bL_0$. Hence if $\bL_0=C_{[\bG,\bG]}(Z(\bL_0)^F_\ell)$ then
$\bL=C_\bG(Z(\bL_0)^F_\ell)\supseteq C_\bG(Z(\bL)^F_\ell)\supseteq\bL$.
This proves~(a). In~(b), since $\la$ is in an $\ell'$-Lusztig series, the
index in $\bL^F$ of the stabiliser in $\bL^F$ of $\la_0$ is prime to $\ell$
and on the other hand, $\la_0$ extends to a character of the stabiliser in
$\bL^F$ of $\la_0$. Thus, $\la(1)_\ell = \la_0(1)_\ell$. Since
$[\bL_0,\bL_0]=[\bL,\bL]$, the assertion follows by \cite[Prop.~2.5(a)]{KM}.
\end{proof}

\begin{rem}
The converse of assertion~(a) of Lemma~\ref{lem:ext-qcentral} fails in general,
even when we restrict to $e_\ell(q)$-split Levi subgroups: Let $\ell$ be odd
and $\bG=\GL_\ell$ with $F$ such that $\bG^F=\GL_\ell(q)$ with $\ell |(q-1)$.
Let $\bL$ a $1$-split Levi subgroup of type $\GL_{\ell-1} \times \GL_1$. Then
$Z(\bL)_\ell^F\cong C_\ell\times C_\ell$ and $\bL = C_\bG(Z(\bL)^F_\ell)$.
But $Z(\bL_0)^F_\ell \cong C_\ell \cong Z([\bG, \bG])^F_\ell$, hence
$C_{[\bG,\bG]}(Z(\bL_0)^F_\ell)= [\bG, \bG]$.
\end{rem}

One might hope for further good properties of the bijection of
Proposition~\ref{prop:biject-Jcusp2} with respect to (quasi)-centrality. In
this direction, we observe the following:

\begin{lem}   \label{lem:biject-central}
 In the situation of Proposition~\ref{prop:biject-Jcusp}, if $(\bL,\la)$ is of
 central $\ell$-defect for a prime $\ell$ with $e_\ell(q)=e$, then so is
 $(\bL_1,\la_1)={}^*\Psi_{\bG_1}^\bG(\bL,\la)$, and we have
 $Z(\bL)_\ell^F=Z(\bL_1)_\ell^F$.
\end{lem}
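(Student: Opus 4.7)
The plan is to combine the degree formula for Lusztig induction with the degree-theoretic characterization of central $\ell$-defect: for $\chi\in\cE(\bH^F,\ell')$ with $\bH$ a connected reductive group, $\chi$ is of central $\ell$-defect if and only if $\chi(1)_\ell=[\bH^F:Z(\bH)^F]_\ell$, as in \cite[Prop.~2.5(a)]{KM}. Applying this to $\la$, the hypothesis reads $\la(1)_\ell=[\bL^F:Z(\bL)^F]_\ell$, and the goal reduces to establishing the corresponding equality for $\la_1$ together with $Z(\bL)_\ell^F=Z(\bL_1)_\ell^F$.

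By Proposition~\ref{prop:biject-Jcusp} we have $\la=\eps_\bL\eps_{\bL_1}R_{\bL_1}^\bL(\la_1)$, so the standard degree formula for Lusztig induction gives $\la(1)=\bigl(|\bL^F|_{p'}/|\bL_1^F|_{p'}\bigr)\la_1(1)$. Taking $\ell$-parts (using $\ell\ne p$) and combining with the hypothesis yields
\[\la_1(1)_\ell=[\bL_1^F:Z(\bL)^F]_\ell.\]
Next, since $\bL_1$ is a Levi subgroup of $\bL$ (as shown in the proof of Proposition~\ref{prop:biject-Jcusp}), it has the same rank as $\bL$, so any maximal torus $\bT$ of $\bL_1$ is also a maximal torus of $\bL$. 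Since $\bL$ is connected reductive, $Z(\bL)\le C_\bL(\bT)=\bT\le\bL_1$, and being central in $\bL$ these elements are a fortiori central in $\bL_1$. Hence $Z(\bL)\le Z(\bL_1)$, and in particular $Z(\bL)_\ell^F\le Z(\bL_1)_\ell^F$.

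Consequently $\la_1(1)_\ell=[\bL_1^F:Z(\bL)^F]_\ell\ge[\bL_1^F:Z(\bL_1)^F]_\ell$, while by Schur's divisibility theorem $\la_1(1)$ divides $[\bL_1^F:Z(\bL_1)^F]$, so $\la_1(1)_\ell\le[\bL_1^F:Z(\bL_1)^F]_\ell$. The two inequalities force equality: simultaneously $\la_1(1)_\ell=[\bL_1^F:Z(\bL_1)^F]_\ell$, whence $(\bL_1,\la_1)$ is of central $\ell$-defect by the characterization above, and $|Z(\bL)^F|_\ell=|Z(\bL_1)^F|_\ell$, which combined with the inclusion just established yields $Z(\bL)_\ell^F=Z(\bL_1)_\ell^F$.

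The one nontrivial point is the converse direction of the degree-theoretic characterization, i.e.\ that attainment of the maximal value $[G:Z(G)]_\ell$ actually forces the block of the character to have a central defect group. This relies on the character being of height zero in its block, which in the present $\ell'$-series setting is guaranteed by \cite[Prop.~2.5]{KM}; once this is invoked, the remainder of the argument is a short juggling act between the Lusztig-induction degree formula, the inclusion of centers coming from the Levi-of-Levi structure, and Schur's divisibility bound.
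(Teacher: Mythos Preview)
Your proof is correct and follows essentially the same route as the paper's: translate the central $\ell$-defect hypothesis into the degree equality $\la(1)_\ell=|\bL^F:Z(\bL)^F|_\ell$, use the Lusztig-induction degree relation $\la(1)_\ell=\la_1(1)_\ell\,|\bL^F:\bL_1^F|_\ell$, note $Z(\bL)\le Z(\bL_1)$ via a common maximal torus, and squeeze $\la_1(1)_\ell$ between $|\bL_1^F:Z(\bL)^F|_\ell$ and $|\bL_1^F:Z(\bL_1)^F|_\ell$. The paper's write-up is terser (it does not name Schur's theorem or cite \cite[Prop.~2.5]{KM} explicitly for the converse direction), but the logic is identical.
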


\begin{proof}
By assumption, we have that $\la(1)_\ell=|\bL^F:Z(\bL)^F|_\ell$. Now
$Z(\bL)$ lies in every maximal torus of $\bL$, hence in $\bL_1$, so we have
that $Z(\bL)_\ell^F\le Z(\bL_1)_\ell^F$. As
$\la=\eps_{\bL_1}\eps_\bL R_{\bL_1}^\bL(\la_1)$ we
obtain $\la(1)_\ell=\la_1(1)_\ell|\bL^F:\bL_1^F|_\ell$, whence
$$\la_1(1)_\ell=\la(1)_\ell|\bL^F:\bL_1^F|_\ell^{-1}
  =|\bL_1^F|_\ell|Z(\bL)^F|_\ell^{-1}
  \ge|\bL_1^F:Z(\bL_1)^F|_\ell.$$
But clearly $\la_1(1)_\ell\le |\bL_1^F:Z(\bL_1)^F|_\ell$, so we have equality
throughout, as claimed.
\end{proof}

\begin{exmp}
The converse of Lemma~\ref{lem:biject-central} does not hold in general. To see
this, let $\bG=\PGL_\ell$ with $\bG^F=\PGL_\ell(q)$, $\bL=\bG$, and
$\bG_1\le\bG$ an $F$-stable maximal torus such that $\bG_1^F$ is a Coxeter
torus of $\bG^F$, of order $\Phi_\ell$. Assume that $\ell|(q-1)$ (so $e=1$).
Then $\bL_1=\bG_1$. Here, any $\la_1\in\Irr(\bL_1^F)$ is
$e$-(Jordan-)cuspidal, and certainly of central $\ell$-defect, and
$|Z(\bL_1)_\ell^F|=(\Phi_\ell)_\ell=\ell$ for $\ell\ge3$, while clearly
$Z(\bL)_\ell^F=Z(\bG)_\ell^F=1$. Furthermore
$$\la(1)_\ell= \la_1(1)_\ell [\bL^F: \bL_1^F]_\ell =[\bL^F: \bL_1^F]_\ell$$
since $\la_1$ is linear. Since $|Z(\bL^F)|_\ell =1$ and $|\bL_1^F|_\ell>1$,
it follows that
$$\la(1)_\ell|Z(\bL^F)|_\ell<|\bL^F|_\ell $$
hence $\la$ is not of central $\ell$-defect (and even not of quasi-central
$\ell$-defect).
\end{exmp}

\begin{exmp}
We also recall that $e$-(Jordan-)cuspidal characters are not always of central
$\ell$-defect, even when $\ell$ is a good prime: Let $\bG^F=\SL_{\ell^2}(q)$
with $\ell|(q-1)$, so $e=1$. Then for $\bT$ a Coxeter torus and
$\theta\in\Irr(\bT^F)$ in general position, $\RTG(\theta)$ is
$e$-(Jordan-)\ cuspidal but not of quasi-central $\ell$-defect.
\end{exmp}

For the next definition note that the property of being of (quasi)-central
$\ell$-defect is invariant under automorphisms of $\bG^F$.

\begin{defn} \label{defn:ejqcc}
 Let $\ell\ne p$ be a prime and $e=e_\ell(q)$.
 A character $\chi\in\cE(\bG^F,\ell')$ is called \emph{$e$-Jordan
 quasi-central cuspidal} if $\chi $ is $e$-Jordan cuspidal and the
 $C_{\bG^*}(s)^{F}$-orbit of unipotent characters of $C_{\bG^*}^\circ(s)^F$
 which corresponds to $\chi$ under Jordan decomposition consists of characters
 of quasi-central $\ell$-defect, where $s\in\bG^{*F}$ is a semisimple
 $\ell'$-element such that $\chi\in\cE(\bG^F,s)$.
 An \emph{$e$-Jordan quasi-central cuspidal pair of $\bG$ } is a pair
 $(\bL, \la)$ such that $\bL$ is an $e$-split Levi subgroup of $\bG$ and
 $\la\in\cE(\bL^F,\ell')$ is an $e$-Jordan quasi-central cuspidal character
 of $\bL^F$.
\end{defn}

We note that the set of $e$-Jordan quasi-central cuspidal pairs of $\bG$ is
closed under $\bG^F$-conjugation. Also, note that Lemma~\ref{lem:derived-Jcusp}
remains true upon replacing the $e$-Jordan-cuspidal property by the $e$-Jordan
quasi-central cuspidal property. This is because, with the notation of
Lemma~\ref{lem:derived-Jcusp}, the orbit of unipotent characters corresponding
to $\la$ under Jordan decomposition is a subset of the orbit of unipotent
characters corresponding to $\la_0$ under Jordan decomposition.
Finally we note that the bijection $\Psi_{\bG_1}^\bG$ of
Proposition~\ref{prop:biject-Jcusp2} preserves $e$-quasi-centrality since
with the notation of the proposition $\la_1$ and $\la$ correspond to the
same orbit of unipotent characters under Jordan decomposition.

\section{Lusztig induction and $\ell$-blocks}   \label{sec:main}

Here we prove our main results on the parametrisation of $\ell$-blocks in
terms of $e$-Harish-Chandra series, in arbitrary Levi subgroups of simple
groups of simply connected type. As in Section~\ref{sec:cusp}, $\ell\ne p$
will be prime numbers, $q$ a power of $p$ and and $e=e_\ell(q)$.

\subsection {Preservation of $\ell$-blocks by Lusztig induction}   \label{subsec:all e-splits}

We first extend \cite[Thm.~2.5]{CE99}. The proof will require
three auxiliary results:

\begin{lem}   \label{lem:cabanes-new}
 Let $\bG$ be connected reductive with a Frobenius endomorphism $F$ endowing
 $\bG$ with an $\FF_q$-rational structure. Let $\bM$ be an $e$-split Levi of
 $\bG^F$ and $c$ an $\ell$-block of $\bM^F$. Suppose that
 \begin{enumerate}
  \item[\rm(1)] the set $\{d^{1,\bM^F}(\mu)\mid
   \mu\in\Irr(c)\cap\cE(\bM^F,\ell')\}$ is linearly independent; and
  \item[\rm(2)] there exists a subgroup $Z\le Z(\bM)^F_\ell$ and a block
   $d$ of $C_\bG^\circ(Z)^F$ such that all irreducible constituents of
   $R_\bM^{C_\bG^\circ(Z)}(\mu)$, where
   $\mu\in\Irr(c)\cap\cE(\bM^F,\ell')$, lie in the block~$d$.
 \end{enumerate}
 Then there exists a block $b$ of $\bG^F$ such that all irreducible
 constituents of $\RMG(\mu)$, where $\mu\in\Irr(c)\cap\cE(\bM^F,\ell')$, lie
 in the block $b$.
\end{lem}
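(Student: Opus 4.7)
The plan is to exploit the inclusion $\bM\le\bH:=C_\bG^\circ(Z)$, which holds because $Z\le Z(\bM)^F_\ell$, and to factor the Lusztig functor as $\RMG=R_\bH^\bG\circ R_\bM^\bH$. Since only the $\ell$-block distribution of the constituents of $\RMG(\mu)$ is at stake, any ambiguity in the choice of parabolic is harmless (cf.\ the conventions of Section~\ref{sec:cusp}). By hypothesis~(2), each constituent of $R_\bM^\bH(\mu)$ lies in the block $d$ of $\bH^F$ for every $\mu\in\Irr(c)\cap\cE(\bM^F,\ell')$, so it remains to show that $R_\bH^\bG$ sends every constituent of $R_\bM^\bH(\mu)$ into a single block $b$ of $\bG^F$, independent of $\mu$.

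To produce the target block $b$, I would use Brauer's First Main Theorem applied to the central $\ell$-subgroup $Z\le Z(\bH)^F_\ell$: the block $d$ corresponds via Brauer induction to a unique block $b:=d^{\bG^F}$ of $\bG^F$, characterised by agreement of central characters on $Z$. The Deligne--Lusztig two-variable character formula, evaluated on elements of the form $zx$ with $z\in Z$ and $x$ an $\ell'$-element, then shows that the $Z$-central character is preserved under $R_\bH^\bG$; in particular, every irreducible constituent $\chi$ of $R_\bH^\bG(\eta)$ with $\eta\in\Irr(d)$ satisfies $\omega_\chi|_Z=\omega_d|_Z$, and hence lies in $b$.

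The linear independence hypothesis~(1) enters to ensure that this procedure does not degenerate. Combined with the Broué--Michel compatibility of $d^{1}$ with Lusztig induction on $\ell'$-series, (1) implies that $d^{1,\bG^F}(\RMG(\mu))\ne 0$ for every $\mu\in\Irr(c)\cap\cE(\bM^F,\ell')$; thus each $\RMG(\mu)$ really has an $\ell'$-constituent, which by the previous paragraph must lie in $b$. All remaining constituents of $\RMG(\mu)$ then automatically lie in $b$ as well, because they are linked to this $\ell'$-constituent through the decomposition matrix of $c$, which is preserved by $\ell$-modular reduction.

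The hardest step I expect is the verification that the central character on $Z$ is preserved by $R_\bH^\bG$, i.e.\ the Brauer-induction compatibility of Lusztig induction along the central $\ell$-subgroup $Z$. This is essentially a cohomological computation whose cleanest statement requires the two-variable Deligne--Lusztig formula applied to $z$-fixed points on the associated Deligne--Lusztig variety, and some care is needed because $C_\bG(Z)$ may be strictly larger than $\bH=C_\bG^\circ(Z)$, so one must argue at the level of blocks rather than full centralisers. Once this compatibility is in place, the identification $b=d^{\bG^F}$ is a standard Brauer-induction argument and hypothesis~(1) serves only to rule out trivially vanishing cases.
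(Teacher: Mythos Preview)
Your factorisation $\RMG = R_\bH^\bG \circ R_\bM^\bH$ with $\bH = C_\bG^\circ(Z)$ does not make sense: Lusztig induction $R_\bH^\bG$ is only defined when $\bH$ is an $F$-stable Levi subgroup of $\bG$, but the connected centraliser of an $\ell$-subgroup $Z$ is in general only a reductive subgroup of maximal rank, not a Levi subgroup (this is precisely why the block theory here is delicate). So there is no functor $R_\bH^\bG$ to which your central-character argument could apply, and your appeal to Brauer's First Main Theorem---which concerns normalisers of defect groups, not connected centralisers of arbitrary central $\ell$-subgroups---is also misplaced. Your final paragraph compounds the problem: the claim that the remaining constituents of $\RMG(\mu)$ are ``linked through the decomposition matrix of $c$'' conflates blocks of $\bM^F$ with blocks of $\bG^F$; the decomposition matrix of $c$ tells you nothing about how characters of $\bG^F$ distribute into $\ell$-blocks of $\bG^F$.

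The paper's argument is structurally different and attempts no such factorisation. One fixes an $\ell'$-constituent $\chi$ of some $\RMG(\mu)$ and studies $c\cdot\sRMG(\chi)$. Condition~(1) is used in an essential way---not merely to avoid degeneracy---to guarantee that $d^{1,\bM^F}(c\cdot\sRMG(\chi))\ne 0$; an orthogonality argument then produces some $\mu'\in\Irr(c)\cap\cE(\bM^F,\ell')$ with $\langle d^{1,\bM^F}(\mu'),\sRMG(\chi)\rangle\ne 0$. From here one follows the proof of \cite[Prop.~2.12]{KM}, which exploits condition~(2) via generalised decomposition maps and the character formula (where $\bM$ \emph{is} a Levi of $C_\bG^\circ(Z)$, so $R_\bM^{C_\bG^\circ(Z)}$ is legitimate) to pin down the block of $\chi$. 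The point is that $\mu'$ may differ from the original $\mu$, but since both lie in $c$, condition~(2) still applies; this is why the lemma adapts rather than simply quotes \cite[Prop.~2.12]{KM}.
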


\begin{proof}
We adapt the argument of \cite[Prop.~2.16]{KM}. Let $\chi\in\Irr(\bG^F,\ell')$
be such that $\langle\RMG(\mu),\chi\rangle \ne 0$ for some
$\mu\in\Irr(c)\cap\cE(\bM^F,\ell')$. Then $\langle\mu,\sRMG(\chi)\rangle\ne0$.
In particular, $c.\sRMG (\chi) \ne 0$. All constituents of $\sRMG(\chi)$ lie
in $\cE(\bM^F,\ell')$, so by assumption~(1) it follows that
$d^{1,\bM^F}(c.\sRMG (\chi))\ne0$. Since $d^{1,\bM^F}(c.\sRMG(\chi))$ vanishes
on $\ell$-singular elements of $\bM^F$, we have that
$$ \langle d^{1,\bM^F}(c.\sRMG (\chi)), c.\sRMG (\chi) \rangle
  = \langle d^{1,\bM^F}(c.\sRMG (\chi)),d^{1,\bM^F}(c.\sRMG(\chi))\rangle\ne0.$$
If $\varphi$ and $\varphi'$ are irreducible $\ell$-Brauer characters of
$\bM^F$ lying in different $\ell$-blocks of $\bM^F$, then
$\langle \varphi, \varphi' \rangle =0 $ (see for instance
\cite[Ch.~3, Ex.~6.20(ii)]{NT}). Thus,
$$\langle d^{1,\bM^F}(c.\sRMG(\chi)),c'.\sRMG(\chi)\rangle
  = \langle d^{1,\bM^F} (c.\sRMG (\chi)),d^{1,\bM^F}(c'.\sRMG(\chi)\rangle=0$$
for all blocks $c'$ of $\bM^F$ different from $c$.
So, $\langle d^{1,\bM^F}(c.\sRMG(\chi)),\sRMG(\chi)\rangle\ne 0$ from which
it follows that $\langle d^{1,\bM^F}(\mu'),\sRMG(\chi)\rangle\ne 0$
for some $\mu'\in\Irr(c)\cap\cE(\bM^F,\ell')$.

Continuing as in the proof of \cite[Prop.~2.12]{KM} gives the required result.
Note that  Condition~(1) of \cite[Prop.~2.12]{KM} is not necessarily met as
stated, since $\mu'$ may be different from $\mu$. However, $\mu$ and $\mu'$
are in the same block of $\bM^F$ which is sufficient to obtain the conclusion
of the lemma.
\end{proof}

\begin{lem}   \label{lem:product}
 Let $\bG$ be connected reductive with a Frobenius endomorphism $F$.
 Suppose that $\bG$ has connected centre and $[\bG,\bG]$ is simply connected.
 Let $\bG=\bX \bY$ such that either $\bX$ is an $F$-stable product of
 components of $[\bG,\bG]$ and $\bY$ is the product of the remaining components
 with $Z(\bG)$, or vice versa.
 Suppose further that $\bG^F/\bX^F \bY^F$ is an $\ell$-group. Let $\bN$ be an
 $F$-stable Levi subgroup of $\bY$ and set $\bM =\bX \bN$. Let $c$ be
 an $\ell$-block of $\bM^F$ and let $c'$ be an $\ell$-block of $\bN^F$ covered
 by $c$. Suppose that there exists a block $b'$ of $\bY^F$ such that every
 irreducible constituent of $R_\bN^\bY(\tau)$ where
 $\tau\in\Irr(c')\cap\cE(\bN^F,\ell')$ lies in $b'$. Then there exists a
 block $b$ of $\bG^F$ such that every irreducible constituent of $\RMG(\mu)$
 where $\mu\in\Irr(c)\cap\cE(\bM^F,\ell')$ lies in $b$.
\end{lem}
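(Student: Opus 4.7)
The plan is to descend from $\bG^F$ (respectively $\bM^F$) to the normal subgroup $\bX^F\bY^F$ (respectively $\bX^F\bN^F$) of $\ell$-power index, where the Lusztig induction splits as an external tensor product along the decomposition $\bG=\bX\bY$ and the hypothesis on $\bY$ can be applied componentwise. The block-theoretic lift back to $\bG^F$ is then standard for $\ell$-power index extensions.

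\textbf{Preliminary structural observations.} Since $\bX$ consists of components and $\bY$ consists of the remaining components together with the centre (or vice versa), we have $[\bX,\bY]=1$ and $\bX\cap\bY$ is a finite subgroup of $Z([\bG,\bG])\le Z(\bG)$. In particular $\bX\cap\bN\le\bX\cap\bY$, and the Lang cohomology obstruction to the decomposition $\bM^F=\bX^F\bN^F$ (resp.\ $\bG^F=\bX^F\bY^F$) lies in $H^1(F,\bX\cap\bN)$, which is an $\ell$-group by the assumption that $\bG^F/\bX^F\bY^F$ is. Thus $\bX^F\bN^F\trianglelefteq\bM^F$ and $\bX^F\bY^F\trianglelefteq\bG^F$ both have $\ell$-power index, and irreducible characters of $\bX^F\bN^F$ are in bijection with external tensor products $\chi\boxtimes\tau$ with $\chi\in\Irr(\bX^F)$, $\tau\in\Irr(\bN^F)$ having matching central characters on $(\bX\cap\bN)^F$. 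Recall that for a normal subgroup of $\ell$-power index, each block of the subgroup is covered by a unique block of the ambient group (via primitive central idempotents of the group algebra over $\bar\FF_\ell$), and conversely each block of the ambient group covers a single orbit of blocks of the subgroup.

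\textbf{Main reduction.} Fix $\mu\in\Irr(c)\cap\cE(\bM^F,\ell')$. By Clifford theory, the irreducible constituents of $\Res^{\bM^F}_{\bX^F\bN^F}(\mu)$ form a single $\bM^F$-orbit and all lie in one block of $\bX^F\bN^F$, which under the product description decomposes as $c_\bX\otimes c'$, where $c_\bX$ is the block of $\bX^F$ determined by $c$ and $c'$ is the given block of $\bN^F$ covered by $c$ (any such constituent is of the form $\chi\boxtimes\tau$ with $\chi\in\Irr(c_\bX)\cap\cE(\bX^F,\ell')$ and $\tau\in\Irr(c')\cap\cE(\bN^F,\ell')$). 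Choose a parabolic subgroup $\bQ\le\bY$ with Levi complement $\bN$, and set $\bP:=\bX\bQ\le\bG$; then $\bP$ is a parabolic subgroup of $\bG$ with Levi complement $\bM$. The $\ell$-adic cohomology of the associated Deligne–Lusztig variety splits compatibly with the decomposition $\bG=\bX\bY$, yielding
$$R_{\bM\le\bP}^\bG(\chi\boxtimes\tau) \;=\; \chi\,\boxtimes\,R_{\bN\le\bQ}^\bY(\tau).$$
By the hypothesis every constituent of $R_\bN^\bY(\tau)$ lies in $b'$, so every constituent of $R_\bM^\bG(\chi\boxtimes\tau)$ restricted to $\bX^F\bY^F$ lies in the single block $c_\bX\otimes b'$ of $\bX^F\bY^F$. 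Since this block does not depend on the choice of constituent $\chi\boxtimes\tau$ of $\Res(\mu)$ nor on $\mu\in\Irr(c)\cap\cE(\bM^F,\ell')$, all constituents of $R_\bM^\bG(\mu)$ lie above this common block of $\bX^F\bY^F$, hence in the unique block $b$ of $\bG^F$ that covers it.

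\textbf{Main obstacle.} The delicate step is the multiplicativity $R_\bM^\bG(\chi\boxtimes\tau)=\chi\boxtimes R_\bN^\bY(\tau)$: one needs to verify that the Deligne–Lusztig variety for $(\bG,\bM,\bP)$ factors as a product of the trivial variety for $\bX$ and the variety for $(\bY,\bN,\bQ)$ despite the central intersection $\bX\cap\bN$, and to match this product of characters with characters of $\bX^F\bN^F$ via the Clifford-theoretic identification sketched above. Once this bookkeeping is done, the block-covering statement across the $\ell$-power index extensions $\bX^F\bN^F\subseteq\bM^F$ and $\bX^F\bY^F\subseteq\bG^F$ is routine and yields the required block $b$.
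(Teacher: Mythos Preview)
Your approach coincides with the paper's: descend along the $\ell$-power index inclusions $\bX^F\bN^F\le\bM^F$ and $\bX^F\bY^F\le\bG^F$, establish a multiplicativity formula for Lusztig induction on the product subgroup, and lift back via block covering. The paper executes precisely the step you flag as the ``main obstacle'': it interprets the induction on the non-connected group $\bX^F\bY^F=(\bG_0A)^F$ (where $\bG_0=[\bG,\bG]$ and $A$ is a suitable piece of a maximal torus) via the disconnected Lusztig induction of \cite[Sec.~1.1]{CE99}, uses the identity $\Res^{\bG^F}_{\bX^F\bY^F}\circ\RMG=R_{\bM_0A}^{\bG_0A}\circ\Res^{\bM^F}_{\bX^F\bN^F}$, and then computes the Deligne--Lusztig variety $\cL^{-1}(\bU)\cap\bG_0A$ explicitly as $\bX^F(\cL^{-1}(\bU)\cap\bY)$ to obtain $R_{\bM_0A}^{\bG_0A}(\tau_1\tau_2)=\tau_1\,R_\bN^\bY(\tau_2)$. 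Your displayed formula $R_{\bM\le\bP}^\bG(\chi\boxtimes\tau)=\chi\boxtimes R_{\bN\le\bQ}^\bY(\tau)$ is not well-typed as written (the left side feeds a character of $\bX^F\bN^F$ into a functor defined on $\bM^F$), and the disconnected framework is exactly what is needed to give it meaning.

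One further point: rather than appealing to Clifford orbits, the paper observes that for an $\ell$-power index extension the restriction map $\ZZ\cE(\bG^F,\ell')\to\ZZ\cE(\bX^F\bY^F,\ell')$ (and likewise for $\bM$) is actually a \emph{bijection}, so $\mu$ restricts irreducibly and $c$ covers a unique block $c_0=d\,c'$ of $\bX^F\bN^F$. This cleanly justifies your claim that every constituent below has the form $\chi\boxtimes\tau$ with $\tau\in\Irr(c')$, which your Clifford argument alone does not quite give (a priori the $\tau$'s could range over an $\bM^F$-orbit of blocks of $\bN^F$).
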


\begin{proof}
We will use the extension of Lusztig induction to certain disconnected groups
as in \cite[Sec.~1.1]{CE99}. Let $\bG_0=[\bG,\bG] =[\bX,\bX] \times [\bY,\bY]$,
$\bM_0= \bG_0 \cap \bM =[\bX,\bX] \times ([\bY, \bY] \cap \bN)$. Then,
$\bG_0^F\subseteq \bX^F \bY^F$ and $\bM_0^F\subseteq\bX^F\bN^F$. Let $\bT$
be an $F$-stable maximal torus of $\bM$. Since $\bG$ and hence also $\bM$
has connected centre, $\bM = \bM_0^F \bT^F$ and $ \bG^F = \bG_0^F\bT^F$.
Further, $A:=\bX^F \bY^F \cap \bT^F = \bX^F \bN^F \cap \bT ^F$ and
$\bX^F \bY^F = \bG_0^F A = (\bG_0A)^F$, $\bX^F\bN^F =\bM_0^F A = (\bM_0 A)^F$.
As in \cite[Sec.~1.1]{CE99}, we denote by $\cE(\bX^F\bY^F,\ell') $ the set of
irreducible characters of $\bX^F\bY^F $ which appear in the restriction of
elements of $\cE(\bG^F, \ell') $ to $ \bX^F\bY^F $.

Let $\chi\in\cE(\bG^F,\ell')$. Since $\bG^F/\bX^F\bY^F$ is an $\ell$-group,
by \cite[Prop.~1.3(i)]{CE99}, $\Res^{\bG^F}_{\bX^F\bY^F}(\chi)$ is irreducible.
Now if $\chi'\in\Irr(\bG^F)$ has the same restriction to $\bX^F\bY^F$ as
$\chi$, then again since $\bG^F/\bX^F\bY^F$ is an $\ell$-group, either
$\chi' =\chi$ or $\chi'\notin \cE(\bG^F,\ell')$. In other words, restriction
from $\ZZ\cE(\bG^F, \ell')$ to $\ZZ\cE(\bX^F\bY^F,\ell')$ is a bijection.
Similarly, restriction from $\ZZ\cE(\bM^F,\ell')$ to $\ZZ\cE(\bX^F\bN^F,\ell')$
is a bijection.

In particular every block of $\bG^F$ covers a unique block of $\bX^F \bY^F$.
Since $ \bG^F/\bX^F \bY^F$ is an $\ell$-group, there is a bijection (through
covering) between the set of blocks of $\bG^F$ and the set of blocks of
$\bX^F\bY^F$. Hence, by the injectivity of restriction from
$\ZZ\cE(\bG^F,\ell')$ to $\ZZ\cE(\bX^F\bY^F,\ell')$, it suffices to prove that
there is a block $b_0$ of $\bX^F \bY^F$ such that every irreducible constituent
of $\Res^{\bG^F}_{\bX^F\bY^F}\RMG(\mu)$ as $\mu$ ranges over
$\Irr(c)\cap\cE(\bM^F,\ell')$ lies in $b_0$.

Following \cite[Sec.~1.1]{CE99}, we have that
$\Res^{\bG^F}_{\bX^F\bY^F}\RMG= R_{\bM_0 A}^{\bG_0A}\Res^{\bM^F}_{\bX^F\bN^F}$
on $\Irr(\bM^F)$ (where here $R_{\bM_0A}^{\bG_0A}$ is Lusztig induction
in the disconnected setting). Thus, it suffices to prove that there is a block
$b_0$ of $\bX^F \bY^F $ such that every irreducible constituent of
$R_{\bM_0A}^{\bG_0A}\Res^{\bM^F}_{\bX^F\bN^F}(\mu)$ as $\mu$ ranges over
$\Irr(c)\cap\cE(\bM^F,\ell')$ is contained in $b_0$.

By the above arguments applied to $\bM^F$ and $\bX^F\bN^F$, there is a unique
block $c_0$ of $\bX^F \bN^F$ covered by $c$. The surjectivity of restriction
from $\ZZ\cE(\bM^F,\ell')$ to $\ZZ\cE(\bX^F\bN^F,\ell')$ implies that
it suffices to prove that there is a block $b_0$ of $\bX^F\bY^F$ such that
every irreducible constituent of $R_{\bM_0A}^{\bG_0A}(\mu)$ for
$\mu\in\Irr(c_0)\cap\cE(\bX^F\bN^F,\ell')$ is contained in $b_0$.

The group $I:=\{(x,x^{-1})\mid x\in \bX^F \cap \bY^F \} \leq \bX \times \bY$
is the kernel of the multiplication map $\bX^F\times\bY^F\to\bX^F\bY^F$.
Identifying $\bX^F\bY^F$ with $\bX^F\times\bY^F/I$ through multiplication,
$\Irr(\bX^F\bY^F)$ is the subset of $\Irr(\bX^F\times\bY^F)$ consisting of
characters whose kernel contains $I$. Since
$\bX^F \cap \bY^ F \leq \bX \cap \bY \leq Z (\bG) \leq \bM $, $I$ is also
the kernel of the multiplication map $\bX^F\times\bN^F\to\bX^F\bN^F$
and we may identify $\Irr(\bX^F\bY^F)$ with the subset of
$\Irr(\bX^F\times\bN^F)$ consisting of characters whose kernel contains $I$.

Any parabolic subgroup of $\bG_0$ containing $\bM_0$ as Levi subgroup is of
the form $[\bX,\bX]\bP$, where $\bP$ is a parabolic subgroup of $[\bY,\bY]$
containing $\bN\cap[\bY,\bY]$ as Levi subgroup. Let
$\bU:= R_{u}(\bX\bP)=R_u(\bP)\leq[\bY,\bY]$ and denote by $\cL^{-1}(\bU)$
the inverse image of $\bU$ under the Lang map $\bG\to\bG$ given by
$g\mapsto g^{-1}F(g)$.

The Deligne--Lusztig variety associated to $R_{\bM_0 A}^{\bG_0A}$ (with respect
to $\bX\bP$) is $\cL^{-1}(\bU)\cap\bG_0 A$. Since $\bT=(\bT\cap\bM_0)Z(\bG)$,
$\bU$ is normalised by $\bT$ and in particular by $A$. Hence,
$$\begin{aligned}
  \cL^{-1}(\bU)\cap\bG_0A = (\cL^{-1}(\bU)\cap\bG_0)A
    &= [\bX, \bX]^F(\cL^{-1}(\bU)\cap[\bY,\bY]) A\\
  &= [\bX, \bX]^F (A \cap  \bX^F)  (\cL^{-1}(\bU)\cap[\bY,\bY])(A\cap\bY^F).
\end{aligned}$$
For the last equality, note that $A=\bX^F \bY^F\cap\bT
  = (\bX^F \cap \bT)(\bY^F \cap \bT) = (\bX^F \cap A) (\bY^F \cap A)$.
Now, $\cL^{-1}(\bU)\cap\bY = (\cL^{-1}(\bU)\cap[\bY,\bY])\bS^F$ for any
$F$-stable maximal torus $\bS$ of $\bY$. Applying this with $\bS=\bT\cap\bY$,
we have that $(\cL^{-1}(\bU)\cap[\bY,\bY])(A\cap\bY^F) = \cL^{-1}(\bU)\cap\bY$.
Also, $[\bX,\bX]^F(A \cap\bX^F)=\bX^F$. Altogether this gives
$\cL^{-1}(\bU)\cap\bG_0A = \bX^F(\cL^{-1}(\bU)\cap\bY)$.
Further, $\cL^{-1}(\bU)\cap\bY $ is the variety underlying $R_\bN^\bY$
(with respect to the parabolic subgroup $\bP Z(\bG)$). Hence, for any
$\tau_1\in\Irr(\bX^F),\tau_2\in\Irr(\bY^F)$ such that $I$ is in the kernel
of $\tau_1\tau_2$, we have
$$R_{\bM_0A}^{\bG_0A}(\tau_1\tau_2)=\tau_1 R_{\bN}^{\bY}(\tau_2).$$
Further, $\tau_1\tau_2\in\cE(\bX^F\bN^F,\ell')$ if and only if
$\tau_1\in\cE(\bX^F,\ell')$ and $\tau_2\in\cE(\bN^F,\ell')$.

To conclude note that $c'$ is the unique block of $\bN^F$ covered by $c_0$ and
$c_0 = d c'$, where $d$ is a block $\bX^F$. Let $b' $ be the block of
$\bY^F$ in the hypothesis. Then, setting $b_0= d b'$
gives the desired result.
\end{proof}

We will also make use of the following well-known extension of
\cite[Prop.~1.5]{En08}.

\begin{lem}   \label{lem:classical2dis}
 Suppose that $q$ is odd. Let $\bG$ be connected reductive with a Frobenius
 endomorphism $F$.
 Suppose that all components of $\bG $ are of classical type $A$, $B$, $C$ or
 $D$ and that $Z(\bG)/\Zc(\bG) $ is a $2$-group. Let $s\in\bG^{*F}$ be
 semisimple of odd order. Then all elements of $\cE(\bG^F, s)$ lie in the
 same $2$-block of $\bG^F$.
\end{lem}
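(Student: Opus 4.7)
Plan:

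The plan is to extend \cite[Prop.~1.5]{En08} --- which handles the analogous statement under the more restrictive assumption that $\bG$ has connected centre --- by means of a regular embedding. First, I would choose a regular embedding $\iota:\bG\hookrightarrow\tbG$ with $\tbG$ connected reductive, $Z(\tbG)$ connected, and $[\tbG,\tbG]=[\bG,\bG]$, so that $\tbG$ retains the classical-components hypothesis. The exact sequence $1\to Z(\bG)\to\bG\times Z(\tbG)\to\tbG\to 1$ (with $Z(\bG)$ embedded antidiagonally) combined with the vanishing $H^1(F,\Zc(\bG))=0$ from Lang--Steinberg yields an embedding
\begin{equation*}
\tbG^F/\bG^F Z(\tbG)^F \hookrightarrow H^1(F,Z(\bG)/\Zc(\bG)),
\end{equation*}
which is a $2$-group by hypothesis. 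Next, lift $s$ to an odd-order $\tilde s\in\tbG^{*F}$; this is possible since the kernel of $\tbG^*\twoheadrightarrow\bG^*$ is a connected (hence divisible) torus, so any preimage can be adjusted to have the same (odd) order as $s$.

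Then I would apply \cite[Prop.~1.5]{En08} to $\tbG$: as $\tbG$ has connected centre, classical components, and $\tilde s$ is of odd order, the series $\cE(\tbG^F,\tilde s)$ lies in a single $2$-block $\tilde b$ of $\tbG^F$. To descend to $\bG^F$, note that every $\chi\in\cE(\bG^F,s)$ is an irreducible constituent of $\Res^{\tbG^F}_{\bG^F}\tilde\chi$ for some $\tilde\chi\in\cE(\tbG^F,\tilde s')$ with $\tilde s'$ a lift of $s$. Different choices of $\tilde s'$ give Lusztig series obtained from $\cE(\tbG^F,\tilde s)$ by tensoring with linear characters of $\tbG^F$ trivial on $\bG^F$, hence producing the same irreducible constituents after restriction to $\bG^F$. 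Via the intermediate subgroup $\bG^F Z(\tbG)^F$, the $2$-power index $[\tbG^F:\bG^F Z(\tbG)^F]$ ensures that $\tilde b$ covers a unique $2$-block of $\bG^F Z(\tbG)^F$ (standard block correspondence for normal subgroups of $p$-power index), and the centrality of $Z(\tbG)^F$ in this group yields a unique $2$-block of $\bG^F$ containing all characters in $\cE(\bG^F,s)$.

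The hard part will be the $2$-block descent step: ensuring that the various $2$-blocks of $\tbG^F$ arising from different lifts $\tilde s'$ of $s$ all project to a single $2$-block of $\bG^F$. The $2$-power index condition implied by the hypothesis is exactly what makes this work, via the standard Clifford-theoretic block correspondence for normal subgroups of $2$-power index, but the correspondence has to be traced carefully through the intermediate group $\bG^F Z(\tbG)^F$ rather than applied directly.
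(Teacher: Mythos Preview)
Your descent step contains a genuine gap. The ``standard block correspondence for normal subgroups of $\ell$-power index'' runs the other way: each block of the normal subgroup is covered by a \emph{unique} block of the overgroup, but a block $\tilde b$ of $\tbG^F$ may well cover an entire nontrivial $\tbG^F$-orbit of blocks of $\bG^F Z(\tbG)^F$. (Concretely: at $\ell=2$ the defect-zero block of $S_3$ covers two blocks of $C_3$; the non-principal block of $S_4$ covers two blocks of $A_4$, and here the defect is positive.) So the $2$-power index alone does not give you a single block below $\tilde b$.

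What would force that orbit to be a singleton is $\tbG^F$-stability of the covered block of $\bG^F$. In your situation this does hold, but only because $C_{\bG^*}(s)$ is \emph{connected}: since $s$ has odd order and $Z(\bG)/\Zc(\bG)$ is a $2$-group, the component group of $C_{\bG^*}(s)$ is trivial, so each $\tilde\chi\in\cE(\tbG^F,\tilde s)$ restricts irreducibly to $\bG^F$ and every $\chi\in\cE(\bG^F,s)$ is $\tbG^F$-stable. That is exactly the observation with which the paper's proof opens.

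The paper then takes a shorter route that avoids the regular embedding entirely. Having $C_{\bG^*}(s)$ connected, it further observes that $C_{\bG^*}^\circ(s)$ is a Levi subgroup of $\bG^*$ (since in classical types an odd-order semisimple element is never isolated away from a Levi), so Bonnaf\'e--Rouquier applies directly in $\bG$: the $2$-blocks meeting $\cE(\bG^F,s)$ biject with the unipotent $2$-blocks of the dual Levi $\bC^F$, and there is only one of these by \cite[Prop.~1.5(a)]{En08}. Your approach can be repaired by inserting the connectedness argument, but at that point the regular embedding and the Clifford-theoretic descent are an unnecessary detour around a one-line application of Bonnaf\'e--Rouquier. (Also check what \cite[Prop.~1.5]{En08} actually asserts: the paper invokes only part~(a), for unipotent blocks, so even in $\tbG$ you would likely need a Bonnaf\'e--Rouquier step.)
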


\begin{proof}
Since $s $ has odd order and $Z(\bG)/\Zc(\bG)$ is a $2$-group,
$C_{\bG^*}(s)$ is connected. On the other hand, since all components of
$\bG^*$ are of classical type and $s $ has odd order, $C_{\bG^*}^\circ(s)$
is a Levi subgroup of $\bG$. Thus, $C_{\bG^*}(s)$ is a Levi subgroup of
$\bG^*$ and by Bonnaf\'e--Rouquier the set of $2$-blocks of $\bG^F$ which
contain a character of $\cE(\bG^F,s)$ is in bijection with the set of
unipotent $2$-blocks of $\bC^F$, where $\bC$ is a Levi subgroup of $\bG$
in duality with $C_{\bG^*}(s)$. Since all components of $\bC$ are also
of classical type, the claim follows by \cite[Prop.~1.5(a)]{En08}.
\end{proof}

We now have the following extension of \cite[Thm.~2.5]{CE99} to all primes.

\begin{thm}   \label{thm:all e-splits}
 Let $\bH$ be a simple algebraic group of simply connected type with a
 Frobenius endomorphism $F:\bH\rightarrow\bH$ endowing $\bH$ with an
 $\FF_q$-rational structure. Let $\bG$ be an $F$-stable Levi subgroup of $\bH$.
 Let $\ell$ be a prime not dividing $q$ and set $e=e_\ell(q)$. Let $\bM$ be
 an $e$-split Levi subgroup of $\bG$ and let $c$ be a block of $\bM^F$. Then
 there exists a block $b$ of $\bG^F$ such that every irreducible constituent
 of $\RMG(\mu)$ where $\mu\in\Irr(c)\cap\cE(\bM^F,\ell')$ lies in $b$.
\end{thm}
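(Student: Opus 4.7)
The plan is to extend \cite[Thm.~2.5]{CE99} (which required $\ell\ge 7$) to all primes by combining the three preceding lemmas with a Bonnaf\'e--Rouquier reduction and an induction on $\dim\bG$. Since $\bH$ is simply connected, $\bG$ has connected centre, so Jordan decomposition and the Bonnaf\'e--Rouquier correspondence apply cleanly. Fix a semisimple $\ell'$-element $s\in\bM^{*F}$ such that $\Irr(c)\cap\cE(\bM^F,s)\neq\emptyset$; then all relevant characters of $c$ lie in $\cE_\ell(\bM^F,s)$.

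First I would reduce to the case that $s$ is quasi-isolated in $\bG^*$. If it is not, then $C_{\bG^*}(s)$ lies in a proper Levi $\bG_1^*\subsetneq\bG^*$; letting $\bG_1\le\bG$ be dual and $\bM_1=\bM\cap\bG_1$, the Bonnaf\'e--Rouquier correspondence together with Proposition~\ref{prop:biject-Jcusp2} (applied to $(\bM,\bG)$ and $(\bM_1,\bG_1)$) shows that the block-preservation property for $\RMG$ is equivalent to the analogous property for $R_{\bM_1}^{\bG_1}$, which holds by induction on $\dim\bG$.

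With $s$ now quasi-isolated in $\bG^*$ I would split into cases. If $\ell$ is good for $\bG$ and satisfies the hypotheses of \cite[Thm.~2.5]{CE99}, invoke that result. If $\bG$ admits a nontrivial $F$-stable product decomposition compatible with $\bM$, I would use Lemma~\ref{lem:product} to reduce to the factors. For $\ell=2$ with components of classical type, Lemma~\ref{lem:classical2dis} puts all of $\cE(\bM^F,s)$ into a single $2$-block of $\bM^F$ and all of $\cE(\bG^F,s)$ into a single $2$-block of $\bG^F$; since $\RMG$ preserves rational Lusztig series (up to $\bG^{*F}$-conjugacy of the semisimple part), the conclusion is immediate. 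The remaining bad-prime situations --- quasi-isolated $\ell'$-series in exceptional groups at $\ell\in\{2,3,5\}$, and $\ell=3$ in type $A$ --- I would attack via Lemma~\ref{lem:cabanes-new} applied to a suitable subgroup $Z\le Z(\bM)^F_\ell$: hypothesis~(1) on linear independence of decomposition maps comes from the existence of $\ell'$-basic sets for Lusztig $\ell'$-series (Geck--Hiss, cf.\ \cite{CE99}), and hypothesis~(2), asking for a common block $d$ of $C_\bG^\circ(Z)^F$ that catches the constituents of $R_\bM^{C_\bG^\circ(Z)}(\mu)$, is supplied either by the inductive hypothesis (when $C_\bG^\circ(Z)\subsetneq\bG$) or, in the extreme case where $Z$ is already central in $\bG$, by the explicit quasi-isolated block data of \cite{KM}.

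The principal obstacle I anticipate is the delicate case $\ell=3$ in type $A$, where defect groups can be extra-special of order~$27$ and the arguments of \cite{CE99} break down. This is precisely the situation to which Section~\ref{subsec:An} is devoted, and its analysis must be inserted as an independent ingredient before the induction above can close.
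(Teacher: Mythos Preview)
Your overall architecture---induction on $\dim\bG$, Bonnaf\'e--Rouquier reduction to quasi-isolated $s$, then case analysis invoking the three preceding lemmas---matches the paper's. But several points are off.

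First, the claim ``Since $\bH$ is simply connected, $\bG$ has connected centre'' is false: Levi subgroups of simply connected groups need not have connected centre (e.g.\ $\GL_k\times\mathrm{Sp}_{2(n-k)}$ in $\mathrm{Sp}_{2n}$). The paper never asserts this; it argues connectedness of $Z(\bG)$ only in the specific exceptional configurations where it actually holds. Your Bonnaf\'e--Rouquier step also should not invoke Proposition~\ref{prop:biject-Jcusp2}, which is about $e$-Jordan-cuspidal pairs; the paper just uses transitivity $\RMG R_{\bM_1}^\bM=R_{\bG_1}^\bG R_{\bM_1}^{\bG_1}$ directly, with $\bM_1^*=C_{\bG_1^*}(\Zc(\bM^*)_e)$ rather than $\bM\cap\bG_1$.

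More substantively, you have misidentified the hard case. Once $s$ is quasi-isolated, \cite[Thm.~2.5]{CE99} already covers every situation where $\ell$ is good for $\bG$---in particular, $\ell=3$ in type~$A$ is \emph{not} a problem here, since $3$ is good for type~$A$. The extra-special defect group analysis of Section~\ref{subsec:An} is \emph{not} used in this proof at all; it enters only later, in the injectivity part of Theorem~\ref{thm:ecusp-blocks}(e). What actually remains after \cite{CE99} is: $\ell$ bad for $\bG$, hence $\bH$ exceptional (classical $\bH$ with $\ell=2$ falls to Lemma~\ref{lem:classical2dis}, and $s=1$ or $\bG=\bH$ to \cite{BMM,En00,KM}). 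The genuine work is then a rather intricate case split over $\bH\in\{E_7,E_8\}$, $\ell\in\{2,3\}$, and the type of $[\bG,\bG]$, where one must check \emph{when} hypothesis~(1) of Lemma~\ref{lem:cabanes-new} can fail (via \cite[Thm.~1.7]{CE99} this forces $\bM$ to contain an $E_6$ factor), and then dispose of the residual configurations using Lemma~\ref{lem:product} or ad~hoc arguments (e.g.\ Borel--de Siebenthal to exclude certain subsystem types). Your sketch does not anticipate this structure.
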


\begin{proof}
Suppose that $\dim(\bG)$ is minimal such that the claim of the Theorem does
not hold. Let $s\in\bM^{*F}$ be a semisimple $\ell'$-element such that
$\Irr(c)\cap\cE(\bM^F,\ell')\subseteq \cE(\bM^F,s)$. Then all irreducible
constituents of $\RMG(\mu)$ where $\mu\in\Irr(c)\cap\cE(\bM^F,\ell')$ are
in $\cE(\bG^F,s)$.

First suppose that $s$ is not quasi-isolated and let $\bG_1$ be a proper
$F$-stable Levi subgroup of $\bG$ whose dual contains $C_{\bG^*}(s)$. Let
$\bM^*$ be a Levi subgroup of $\bG^*$ in duality with $\bM$ and set
$\bM_1^* = C_{\bG_1^*}(\Zc (\bM^*)_e)$. Then, as in the proof of
Proposition~\ref{prop:biject-Jcusp}, $\bM_1^* $ is an $e$-split Levi subgroup
of $\bG_1^*$ and letting $\bM_1 $ be the dual of $\bM_1^*$ in $\bG$, $\bM_1$
is an $e$-split Levi subgroup of $\bG_1$. Further, $\bM_1^*\geq C_{\bM^*}(s)$.
Hence there exists a unique block say $c_1$ of $\bM_1^F$ such that
$\Irr(c_1)\cap\cE(\bM_1^F,\ell')\subseteq\cE(\bM_1^F,s)$ and such that
$c_1$ and $c$ are Bonnaf\'e--Rouquier correspondents.

By induction our claim holds for $\bG_1$ and the block $c_1$ of $\bM_1$.
Let $b_1$ be the block of $\bG_1^F$ such that every irreducible constituent
of $R_{\bM_1}^{\bG_1}(\mu)$ where $\mu\in\Irr(c_1)\cap\cE(\bM_1^F,\ell')$ lies
in $b_1$ and let $b$ be the Bonnaf\'e--Rouquier correspondent of $b_1$ in
$\bG^F$.

Now let $\mu\in\Irr(c)\cap\cE(\bM^F, s)$ and let $\chi $ be an irreducible
constituent of $\RMG(\mu)$. Let $\mu_1$ be the unique character in
$\Irr(\bM_1^F, s)$ such that $\mu=\pm R_{\bM_1} ^\bM(\mu_1)$. Then,
$\mu_1\in\Irr(c_1)$ and
$$\RMG(\mu)=\RMG(R_{\bM_1}^\bM(\mu_1))
  =R_{\bG_1}^\bG(R_{\bM_1}^{\bG_1}(\mu_1)).$$
All irreducible constituents of $R_{\bM_1}^{\bG_1}(\mu_1)$ lie in $b_1$.
Hence, by the above equation and by the Bonnaf\'e--Rouquier theorem,
$\chi$ lies in $b$, a contradiction.

So, we may assume from now on that $s$ is quasi-isolated in $\bG^*$. By
\cite[Thm.~2.5]{CE99}, we may assume that $\ell$ is bad for $\bG$ and hence
for $\bH$. So $\bH $ is not of type $A$. If $\bH$ is of type $B$, $C$ or $D$,
then $\ell=2$ and we have a contradiction by Lemma~\ref{lem:classical2dis}.

Thus $\bH$ is of exceptional type. Suppose that $s=1$. By \cite[Thm.~3.2]{BMM}
$\bG^F$ satisfies an $e$-Harish-Chandra theory above each unipotent $e$-cuspidal
pair $(\bL,\la)$ and by \cite[Thms.~A,A.bis]{En00}, all irreducible constituents
of $R_{\bL} ^\bG(\la)$ lie in the same $\ell$-block of $\bG^F$.

So we may assume that $s \ne 1$. We consider the case that $\bG=\bH $. Then
by \cite[Thm.~1.4]{KM}, $\bG^F$ satisfies an $e$-Harish-Chandra theory above
each $e$-cuspidal pair $(\bL,\la)$ below $\cE(\bG^F,s)$ and by
\cite[Thm.~1.2]{KM}, all irreducible constituents of $R_{\bL}^\bG(\la)$ lie in
the same $\ell$-block of $\bG^F$.

So, we may assume that $\bG$ is proper in $\bH$. If $\bH $ is of type $G_2$,
$F_4$ or $E_6$, then $\ell=2$, all components of $\bG$ are of classical type.
For $G_2$ and $F_4$ we have that $Z(\bH)$ and therefore $Z(\bG)$ is connected.
If $\bH$ is of type $E_6$, since $2$ is bad for $\bG$, $\bG$ has a component
of type $D_n$, $n\geq 4$. By rank considerations, $[\bG,\bG]$ is of type
$D_4$ or $D_5$. Since $|Z(\bH)/\Zc(\bH)|=3$ it follows again that $Z(\bG)$ is
connected. In either case we get a contradiction by
Lemma~\ref{lem:classical2dis}.

So, $\bH$ is of type $E_7$ or $E_8$. Since $\bG$ is proper in $\bH$,
$5$ is good for $\bG$, hence $\ell=3$ or $2$. Also, we may assume that at
least one of the two assumptions of Lemma~\ref{lem:cabanes-new} fails to hold
for $\bG$, $\bM $ and $c$.

Suppose that $\ell=3$. Since $\bG$ is proper in $\bH$ and $3$ is bad for
$\bG$, either $[\bG,\bG]$ is of type
$E_6$, or $\bH$ is of type $E_8$ and $[\bG,\bG]$ is of type $E_6+A_1$ or of
type $E_7$. In all cases, $Z(\bG)$ is connected (note that if $\bH$ is of
type $E_7$, then $[\bG, \bG]$ is of type $E_6$, whence the order of
$Z(\bG)/\Zc(\bG)$ divides both $2$ and $3$). If $\bG =\bM$, there is
nothing to prove, so we may assume that $\bM$ is proper in $\bG$.
Let $\bC:=C_\bG^\circ(Z(\bM)^F_3)\ge\bM$.

We claim that there is a block, say $d$ of $\bC^F$ such that for all
$\mu\in\Irr(c)\cap\cE(\bM^F,\ell')$, every irreducible constituent of
$R_\bM^\bC(\mu)$ lies in $d$.  Indeed, since $\bM $ is proper in $\bG$ and
since $Z(\bG)$ is connected, by \cite[Prop.~2.1]{CE93} $\bC$ is proper in
$\bG$. Also, by direct calculation either $\bC$ is a Levi subgroup of $\bG$
or $3$ is good for $\bC$. In the first case, the claim follows by the
inductive hypothesis since $\bM$ is also $e$-split in $\bC$.
In the second case, we are done by \cite[Thm.~2.5]{CE99}.

Thus, we may assume that assumption~(1) of Lemma~\ref{lem:cabanes-new} does
not hold. Hence, by \cite[Thm.~1.7]{CE99}, $3$ is bad for $\bM$. Consequently,
$\bM$ has a component of non-classical type. Since $\bM$ is proper in $\bG$,
this means that $[\bG,\bG]$ is of type $E_6+A_1$ or of type $E_7$ and
$[\bM,\bM]$ is of type $E_6$.
Suppose that $[\bG,\bG]$ is of type $E_6+A_1$. Since $[\bM, \bM]$ is of type
$E_6$, and since $3$ is good for groups of type $A$, the result follows from
Lemma~\ref{lem:product}, applied with $\bX$ being the component of $\bG$ of
type $E_6$, and \cite[Thm.~ 2.5]{CE99}.

So we have $[\bG,\bG]$ of type $E_7$ and $[\bM,\bM]$ of type $E_6$. Suppose
that $s$ is not quasi-isolated in $\bM^*$. Then $c$ is in
Bonnaf\'e--Rouquier correspondence with a block, say $c'$ of a proper
$F$-stable Levi subgroup, say $\bM'$ of $\bM$. The prime $3$ is good for any
proper Levi subgroup of $\bM$, hence by \cite [Thm.~1.7]{CE99}
condition~(1) of Lemma~\ref{lem:cabanes-new} holds for the group $\bM'$ and the
block $c'$. By Bonnaf\'e--Rouquier, this condition also holds for $\bM$ and
$c$, a contradiction. So, $s$ is quasi-isolated in $\bM^*$. Since as pointed
out above, $\bG$ has connected center, so does $\bM$ whence $s$ is isolated
in $\bM^*$. Also, note that since $s$ is also quasi-isolated in $\bG^*$, by
the same reasoning $s$ is isolated in $\bG^*$. Inspection shows that the
only possible case for this is when $s$ has order
three with $C_{\bG^*}(s)$ of type $A_5+A_2$, $C_{\bM^*}(s)$ of type $3A_2$.
Since $s$ is supposed to be a $3'$-element, this case does not arise here.

Now suppose that $\ell=2$. Since $Z(\bH)/\Zc(\bH)$ has order dividing~$2$, by
Lemma~\ref{lem:classical2dis} we may assume that $\bG$ has at least one
non-classical component, that is we are in one of the cases $[\bG,\bG] = E_6$,
or $\bH=E_8$ and $[\bG,\bG]=E_6 +A_1$ or $E_7$.  Again, in all cases, $Z(\bG)$
is connected and consequently $C_{\bG^*}(s)$ is connected and $s$ is isolated.

Suppose first that $[\bG,\bG]=E_7$. We claim that all elements of
$\cE(\bG^F,s)$ lie in the same $2$-block. Indeed, let $\bar s$ be the image
of $s$ under the surjective  map $\bG^*\to[\bG,\bG]^*$ induced by the regular
embedding of $[\bG,\bG]$ in $\bG$.
By \cite[Table~4]{KM}, all elements of $\cE([\bG,\bG]^F,\bar s)$ lie in the
same $2$-block, say $d$ of $[\bG,\bG]^F$. So, any block of $\bG^F$ which
contains a character in $\cE(\bG^F, s)$ covers $d$. By general block
theoretical reasons, there are at most $|\bG^F/[\bG,\bG]^F|_{2'}$ $2$-blocks
of $\bG^F$ covering a given $d$. Now since $s$ is a $2'$-element,
$C_{[\bG,\bG]^*}(\bar s)$ is connected.
Thus, if $\mu \in \cE([\bG, \bG]^F,\bar s)$, then there are
$|\bG^F/[\bG,\bG]^F|_{2'}$ different $2'$-Lusztig series of $\bG^F$ containing
an irreducible character covering $\mu$. Since characters in different
$2'$-Lusztig  series lie in different $2$-blocks, the claim follows.

By the claim above, we may assume that either $[\bG,\bG]=E_6$ or
$[\bG,\bG]=E_6+A_1$. Since $s $ is isolated of odd order in
$\bG^*$, by \cite[Table~1]{KM} all components of $C_{\bG^*}(s)$ are of type
$A_2$ or $A_1$. Consequently, all components of $C_{\bM^*}(s)$ are of type $A$.
Suppose first that $\bM$ has a non-classical component. Then $[\bM,\bM]$ is of
type $E_6 $, and $[\bG,\bG]= E_6 +A_1$. This may be ruled out by
Lemma~\ref{lem:product}, applied with $\bX$ equal to the product of the
component of type $E_6$ with $Z(\bG)$ and $\bY$ equal to the component
of type $A_1$.
\par
So finally suppose that all components of $\bM$ are of classical type. Then,
$C_{\bM^*}(s) = C_{\bM^*}^\circ(s)$ is a Levi subgroup of $\bM$ with all
components of type $A$. Hence,  the first hypothesis of
Lemma~\ref{lem:cabanes-new} holds by the Bonnaf\'e--Rouquier theorem and
\cite[Thm.~1.7]{CE99}.
So, we may assume that the second hypothesis of Lemma~\ref{lem:cabanes-new}
does not hold. Let $\bC:=C_\bG^\circ(Z (\bM^F)_2)$. Since $\bM$ is a proper
$e$-split Levi subgroup of $\bG$, and since $Z(\bG)$ is connected, by
\cite[Prop.~2.1]{CE93} $\bC$ is proper in $\bG$. By induction, we may assume
that $\bC$ is not a Levi subgroup of $\bG$. In particular, the intersection
of $\bC$ with the component of type $E_6$ of $\bG$ is proper in that component
and hence all components of $\bC$ are of type $A$
or $D$. If all components of $\bC$ are of type $A$, then $2$ is good for
$\bC$ and the second hypothesis of Lemma~\ref{lem:cabanes-new} holds by
\cite[Thm.~2.5]{CE99}. Thus we may assume that $\bC$ has a component of
type $D$. Since all components of $\bC$ are classical, by
Lemma~\ref{lem:classical2dis}, we may assume that $Z(\bC)/\Zc(\bC) $
is not a $2$-group and consequently $\bC$ has a component of type $A_n$,
with $n\equiv2\pmod3$. But by the Borel--de Siebenthal algorithm, a group
of type $E_6$ has no subsystem subgroup of type $D_m+ A_n$ with $n\geq1$ and
$m\geq 4$.
\end{proof}

\subsection{Characters in $\ell$-blocks } \label{subsec:allcharacters}
Using the results collected so far, it is now easy to characterise all
characters in $\ell'$-series inside a given $\ell$-block in terms of Lusztig
induction.

\begin{defn}
 As in \cite[1.11]{CE99} (see also \cite[Def.~3.1]{BMM}) for $e$-split Levi
 subgroups $\bM_1,\bM_2$ of $\bG$ and $\mu_i\in\Irr(\bM_i^F)$ we write
 $(\bM_1,\mu_1)\le_e(\bM_2,\mu_2)$ if $\bM_1\le\bM_2$ and $\mu_2$ is a
 constituent of $R_{\bM_1}^{\bM_2}(\mu_1)$ (with respect to some parabolic
 subgroup of $\bM_2$ with Levi subgroup $\bM_1$). We let $\ll_e$ denote the
 transitive closure of the relation $\le_e$.
\end{defn}

As pointed out in \cite[1.11]{CE99} it seems reasonable to expect that the
relations $\le_e$ and $\ll_e$ coincide. While this is known to hold for
unipotent characters (see \cite[Thm.~3.11]{BMM}), it is open in general.

We put ourselves in the situation and notation of Theorem~A.

\begin{thm}   \label{thm:all covered}
 Let $b$ be an $\ell$-block of $\bG^F$ and denote by $\cL(b)$ the set of
 $e$-Jordan-cuspidal pairs $(\bL,\la)$ of $\bG$ such that
 $\Irr(b)\cap\RLG(\la)\ne\emptyset$. Then
 $$\Irr(b)\cap\cE(\bG^F,\ell')
   =\{\chi\in\cE(\bG^F,\ell')\mid \exists\,(\bL,\la)\in\cL(b)\text{ with }
    (\bL,\la)\ll_e(\bG,\chi)\}.$$
\end{thm}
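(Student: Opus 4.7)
The plan is to prove the two inclusions separately, using Theorem~\ref{thm:all e-splits} and the transitivity of Lusztig induction (for compatibly chosen parabolic subgroups) as the principal tools.

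For the direction $\supseteq$, I would let $\chi\in\cE(\bG^F,\ell')$ and suppose $(\bL,\la)\in\cL(b)$ satisfies $(\bL,\la)\ll_e(\bG,\chi)$. Fix a chain $(\bL,\la)=(\bM_0,\mu_0)\le_e\cdots\le_e(\bM_n,\mu_n)=(\bG,\chi)$ realising the relation. Writing $\chi\in\cE(\bG^F,s)$ for some $\ell'$-semisimple $s\in\bG^{*F}$, compatibility of Lusztig induction with Lusztig series forces $\mu_i\in\cE(\bM_i^F,\ell')$ for every $i$, and in particular $\la\in\cE(\bL^F,\ell')$. Choosing the parabolic subgroups along the chain compatibly, the standard transitivity of Lusztig induction realises $\chi$ as an irreducible constituent of $\RLG(\la)$. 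Theorem~\ref{thm:all e-splits} applied to $\bL\le\bG$ and the $\ell$-block $c$ of $\bL^F$ containing $\la$ yields a unique $\ell$-block of $\bG^F$ containing every $\ell'$-constituent of $\RLG(\la')$ with $\la'\in\Irr(c)\cap\cE(\bL^F,\ell')$; by the defining property of $\cL(b)$ this block is $b$, whence $\chi\in b$.

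For the direction $\subseteq$, I would take $\chi\in\Irr(b)\cap\cE(\bG^F,\ell')$, say $\chi\in\cE(\bG^F,s)$ with $s$ an $\ell'$-element of $\bG^{*F}$, and descend along $\le_e$: if $\chi$ is $e$-cuspidal I set $(\bL,\la):=(\bG,\chi)$; otherwise, by definition there exists a proper $e$-split Levi $\bM<\bG$ with $\sRMG(\chi)\ne 0$, and consequently some $\mu\in\Irr(\bM^F)$ with $(\bM,\mu)\le_e(\bG,\chi)$, and I repeat the construction inside $\bM$. Termination by dimension produces an $e$-cuspidal pair $(\bL,\la)$ with $(\bL,\la)\ll_e(\bG,\chi)$ and $\la\in\cE(\bL^F,s)$. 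Since an $e$-cuspidal character is $e$-Jordan-cuspidal by \cite[Prop.~1.10(ii)]{CE99} (recalled in Remark~\ref{rem:KM}), the pair $(\bL,\la)$ is $e$-Jordan-cuspidal. Applying transitivity of Lusztig induction as in the $\supseteq$ direction, $\chi$ appears as a constituent of $\RLG(\la)$; since $\chi\in b$, this forces $(\bL,\la)\in\cL(b)$.

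The most delicate ingredient, in my view, is the coherent use of transitivity of Lusztig induction along a $\ll_e$-chain. Although $\RLG$ is not yet known to be parabolic-independent in full generality, the composition formula $R_{\bL\subseteq\bP_1}^{\bG}=R_{\bM\subseteq\bP_2}^{\bG}\circ R_{\bL\subseteq\bP_1\cap\bM}^{\bM}$ holds for any compatible choice of parabolics, and such compatible choices along the chain are always available. Since the block assignment produced by Theorem~\ref{thm:all e-splits} is itself independent of the parabolic (by the convention adopted in Section~\ref{sec:cusp}), the residual ambiguity is absorbed into that theorem and causes no genuine difficulty.
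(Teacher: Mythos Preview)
Your overall architecture (descend to an $e$-cuspidal pair for $\subseteq$, climb the chain for $\supseteq$) matches the paper's, but there is a genuine gap in both directions at the same step: the assertion that transitivity of Lusztig induction along a $\ll_e$-chain forces $\chi$ to be an irreducible constituent of $\RLG(\la)$. Transitivity only gives $\RLG(\la)=R_{\bM_{n-1}}^\bG\big(R_{\bM_{n-2}}^{\bM_{n-1}}(\cdots R_\bL^{\bM_1}(\la)\cdots)\big)$ as an equality of \emph{virtual} characters. At each step $\mu_{i+1}$ is merely one constituent of $R_{\bM_i}^{\bM_{i+1}}(\mu_i)$, and the other constituents may, after further induction, produce $\pm\chi$ with cancellation. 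That $(\bL,\la)\ll_e(\bG,\chi)$ should imply $(\bL,\la)\le_e(\bG,\chi)$ is exactly the open question mentioned just before the theorem (and in \cite[1.11]{CE99}); you are effectively assuming it.

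The paper avoids this by not collapsing the chain. Instead it applies Theorem~\ref{thm:all e-splits} at each link $\bM_i\le\bM_{i-1}$: starting from the block $b_r$ of $\la=\mu_r$, one obtains blocks $b_{r-1},\ldots,b_0$ of $\bM_{r-1}^F,\ldots,\bG^F$ such that every constituent of $R_{\bM_i}^{\bM_{i-1}}(\zeta)$ for $\zeta\in\Irr(b_i)\cap\cE(\bM_i^F,\ell')$ lies in $b_{i-1}$; in particular $\mu_{i-1}\in b_{i-1}$. Thus $\chi=\mu_0\in b_0$. Composing the induction maps along the chain (here transitivity \emph{is} used, but only to see that every constituent of $\RLG(\la)$ lies in $b_0$), and using either $\chi\in b$ or $(\bL,\la)\in\cL(b)$ together with $\RLG(\la)\ne0$, one identifies $b_0=b$. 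This stepwise propagation of blocks replaces your unjustified constituent claim and is the missing ingredient.
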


\begin{proof}
Let $b$ be as in the statement and first assume that
$\chi\in\Irr(b)\cap\cE(\bG^F,\ell')$. If $\chi$ is not $e$-Jordan-cuspidal,
then it is not $e$-cuspidal, so there exists a proper $e$-split
Levi subgroup $\bM_1$ such that $\chi$ occurs in $R_{\bM_1}^\bG(\mu_1)$ for
some $\mu_1\in\cE(\bM_1^F,\ell')$. Thus inductively we obtain a chain of
$e$-split Levi subgroups $\bM_r\lneq\ldots\lneq\bM_1\lneq\bM_0:=\bG$ and
characters $\mu_i\in\cE(\bM_i^F,\ell')$ (with $\mu_0:=\chi$) such that
$(\bM_r,\mu_r)$ is $e$-Jordan cuspidal and such that
$(\bM_i,\mu_i)\le_e(\bM_{i-1},\mu_{i-1})$ for $i=1,\ldots,r$, whence
$(\bM_r,\mu_r)\ll_e(\bG,\chi)$. Let $b_r$ be the $\ell$-block of $\bM_r^F$
containing $\mu_r$. Now Theorem~\ref{thm:all e-splits} yields that for each
$i$ there exists a block, say $b_i$, of $\bM_i^F$ such that all constituents of
$R_{\bM_i}^{\bM_{i-1}}(\zeta_i)$ lie in $b_{i-1}$ for all
$\zeta_i\in\Irr(b_i)\cap\cE(\bM_i^F,\ell')$. In particular, $\chi$ lies in
$b_0$, so $b_0=b$, and thus $(\bM_r,\mu_r)\in\cL(b)$.
\par
For the reverse inclusion, let $(\bL,\la)\in\cL(b)$ and
$\chi\in\Irr(\bG^F,\ell')$ such that $(\bL,\la)\ll_e(\bG,\chi)$. Thus there
exists a chain of $e$-split Levi subgroups $\bL=\bM_r\lneq\ldots\lneq\bM_0=\bG$
and characters $\mu_i\in\Irr(\bM_i^F)$ with
$(\bM_i,\mu_i)\le_e(\bM_{i-1},\mu_{i-1})$.
Again, application of Theorem~\ref{thm:all e-splits} allows to conclude that
$\chi\in \Irr(b)$.
\end{proof}

\subsection{$\ell$-blocks and derived subgroups}

In the following two results, which will be used in showing that the map
$\Xi$ in Theorem~A is surjective, $\bG$ is connected reductive with Frobenius
endomorphism $F$, and $\bG_0:=[\bG,\bG]$. Here, in the cases that the Mackey
formula is not known to hold we assume that $R_{\bL_0}^{\bG_0}$ and $\RLG$
are with respect to a choice of parabolic subgroups $\bP_0\ge\bL_0$ and
$\bP\ge\bL$ such that $\bP_0 = \bG_0\cap\bP$.

\begin{lem}   \label{lem:commutingR}
 Let $b$ be an $\ell$-block of $\bG^F$ and let $b_0$ be an $\ell$-block of
 $\bG_0^F$ covered by $b$. Let $\bL$ be an $F$-stable Levi subgroup of $\bG$,
 $\bL_0 =\bL\cap \bG_0$
 and let $\la_0\in\Irr(\bL_0^F)$. Suppose that every irreducible constituent
 of $R_{\bL_0}^{\bG_0}(\la_0)$ is contained in $b_0$. Then there exists
 $\la\in\Irr(\bL^F)$ and $\chi\in\Irr(b)$ such that $\la_0$ is an
 irreducible constituent of $\Res^{\bL^F}_{\bL_0^F}(\la)$ and $\chi$ is an
 irreducible constituent of $\RLG(\la)$.
\end{lem}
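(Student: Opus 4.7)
The approach combines two standard ingredients: Clifford theory for the normal inclusions $\bL_0^F\trianglelefteq\bL^F$ and $\bG_0^F\trianglelefteq\bG^F$, and the compatibility of Lusztig induction with ordinary Harish--Chandra induction and restriction relative to the derived subgroup. Under the matching parabolic choice in the hypothesis ($\bP_0=\bG_0\cap\bP$), both
$$\Res^{\bG^F}_{\bG_0^F}\circ\RLG=R_{\bL_0}^{\bG_0}\circ\Res^{\bL^F}_{\bL_0^F}\qquad\text{and}\qquad\RLG\circ\Ind^{\bL^F}_{\bL_0^F}=\Ind^{\bG^F}_{\bG_0^F}\circ R_{\bL_0}^{\bG_0}$$
hold as identities of maps on virtual characters (in the Harish--Chandra case this is Mackey; the general case reduces to a compatible decomposition of the Deligne--Lusztig variety $\cL^{-1}(\bU)$, in the spirit of manipulations already used in the proof of Lemma~\ref{lem:product}). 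The second identity is the Frobenius adjoint of the first together with the analogous compatibility for $\sRLG$.

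Clifford theory gives $\Ind^{\bL^F}_{\bL_0^F}(\la_0)=\sum_{\la'\text{ covering }\la_0}m_{\la'}\la'$ with positive multiplicities $m_{\la'}>0$. Applying $\RLG$ and the second identity above yields
$$\sum_{\la'}m_{\la'}\RLG(\la')=\Ind^{\bG^F}_{\bG_0^F}\bigl(R_{\bL_0}^{\bG_0}(\la_0)\bigr),$$
and pairing with any $\chi\in\Irr(\bG^F)$ by Frobenius reciprocity gives
$$\sum_{\la'}m_{\la'}\langle\RLG(\la'),\chi\rangle_{\bG^F}=\langle R_{\bL_0}^{\bG_0}(\la_0),\Res^{\bG^F}_{\bG_0^F}\chi\rangle_{\bG_0^F}.$$
The plan is to exhibit some $\chi\in\Irr(b)$ making the right-hand side positive: once this is done, positivity of the $m_{\la'}$ forces some $\la'$ covering $\la_0$ to satisfy $\langle\RLG(\la'),\chi\rangle>0$, and then the pair $(\la,\chi):=(\la',\chi)$ meets both requirements of the lemma.

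To produce such a $\chi$, choose any constituent $\eta$ of the (nonzero) virtual character $R_{\bL_0}^{\bG_0}(\la_0)$ with positive coefficient; by hypothesis $\eta\in\Irr(b_0)$. Since $b$ covers $b_0$, there exists $\chi\in\Irr(b)$ with $\eta$ appearing in $\Res^{\bG^F}_{\bG_0^F}\chi$. By Clifford, $\Res^{\bG^F}_{\bG_0^F}\chi$ is a positive sum of $\bG^F$-conjugates $\eta^g$ of $\eta$, and the block hypothesis ensures that only those conjugates with $\eta^g\in b_0$ (equivalently $g$ in the stabiliser $T$ of $b_0$ in $\bG^F$) can contribute to the inner product. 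The main obstacle is ruling out complete cancellation among these signed contributions. The cleanest route I see is a Fong--Reynolds reduction to $T$: there $b$ has a unique Fong--Reynolds correspondent $\tilde b$ over $T$ covering $b_0$, and since $b_0$ is $T$-stable the orbit-summation collapses to a positive single term carrying the coefficient $a_\eta>0$. The Morita bijection $\Ind^{\bG^F}_T\colon\Irr(\tilde b)\to\Irr(b)$ then transports the conclusion back to $\bG^F$, yielding the required $\la\in\Irr(\bL^F)$ and $\chi\in\Irr(b)$.
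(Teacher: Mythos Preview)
Your overall strategy---using the identity $\RLG\circ\Ind^{\bL^F}_{\bL_0^F}=\Ind^{\bG^F}_{\bG_0^F}\circ R_{\bL_0}^{\bG_0}$ together with Frobenius reciprocity---is exactly the paper's starting point. The gap is in your final step. After the Fong--Reynolds reduction to the stabiliser $T$ of $b_0$, you claim that ``the orbit-summation collapses to a positive single term carrying the coefficient $a_\eta>0$''. This is not justified: $T$-stability of $b_0$ does \emph{not} force $T$-stability of $\eta$. For $\tilde\chi\in\Irr(\tilde b)$ covering $\eta$ one has $\Res^{T}_{\bG_0^F}\tilde\chi=e\sum_{t}\eta^t$ with the sum over the full $T$-orbit of $\eta$, so the inner product with $R_{\bL_0}^{\bG_0}(\la_0)$ is $e\sum_t a_{\eta^t}$, and nothing you have said prevents these signed coefficients from cancelling.

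The paper avoids this issue by not aiming for $b$ directly. Since $\Ind^{\bG^F}_{\bG_0^F}R_{\bL_0}^{\bG_0}(\la_0)$ is nonzero, pick any constituent $\chi'$; by the identity above, $\chi'$ appears in $\RLG(\la')$ for some $\la'$ covering $\la_0$, and $\chi'$ also appears in $\Ind^{\bG^F}_{\bG_0^F}(\chi_0)$ for some $\chi_0\in\Irr(b_0)$, so $\chi'$ lies in \emph{some} block $b'$ covering $b_0$. Now use that $\bG^F/\bG_0^F$ is abelian: there is a linear character $\theta$ of $\bG^F/\bG_0^F$ with $b=\theta\otimes b'$, and since Lusztig induction commutes with such twists, $\chi:=\theta\otimes\chi'\in\Irr(b)$ is a constituent of $\RLG(\theta\otimes\la')$, while $\la:=\theta\otimes\la'$ still restricts to $\la_0$ on $\bL_0^F$. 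This twist is the missing idea; equivalently, one can repair your argument by observing that $\Ind^{\bG^F}_{\bG_0^F}R_{\bL_0}^{\bG_0}(\la_0)$ is itself invariant under twisting by $\Irr(\bG^F/\bG_0^F)$ and that twisting acts transitively on the blocks over $b_0$, so nonvanishing of its $b'$-component forces nonvanishing of its $b$-component.
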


\begin{proof}
Since $\bG = \Zc(\bG)\bG_0$, by \cite[Prop.~10.10]{B06} we have that
$$\RLG\Ind_{\bL_0^F}^{\bL^F}(\la_0)
  = \Ind_{\bG_0^F}^{\bG^F}R_{\bL_0}^{\bG_0}(\la_0).$$
Note that the result in \cite{B06} is only stated for the case that $\bG$ has
connected centre but the proof does not use this hypothesis.
The right hand side of the above equality evaluated at $1$ is non-zero. Let
$\chi'\in\Irr(\bG^F)$ be a constituent of the left hand side of the equality.
There exists $\la\in\Irr(\bL^F)$ and $\chi_0$ in $\Irr(\bG_0^F)$ such
that $\la$ is an irreducible constituent of $\Ind^{\bL^F}_{\bL_0^F}(\la_0)$,
$\chi'$ is an irreducible constituent of $\RLG(\la)$, $\chi_0$ is an
irreducible constituent of $R_{\bL_0}^{\bG_0}(\la_0)$ and $\chi'$ is
an irreducible constituent of $\Ind_{\bG_0^F}^{\bG^F}(\chi_0)$.
Since $\chi_0\in\Irr(b_0)$, $\chi'$ lies in a block, say $b'$, of $\bG^F$
which covers $b_0$. Since $b$ also covers $b_0$ and since $\bG^F/\bG_0^F$
is abelian, there exists a linear character, say $\theta$ of $\bG^F/\bG_0^F$
such that $b= b'\otimes\theta$ (see \cite[Lemma~2.2]{KM}). Now the result
follows from \cite[Prop.~10.11]{B06} with $\chi =\chi' \otimes \theta$.
\end{proof}

\begin{lem}   \label{lem:commutingRreverse}
 Let $b$ be an $\ell$-block of $\bG^F$ and let $\bL$ be an $F$-stable Levi
 subgroup of $\bG$ and $\la \in \Irr(\bL^F) $ such that every irreducible
 constituent of $R_{\bL}^{\bG}(\la)$ is contained in $b$. Let
 $\bL_0 =\bL\cap \bG_0$ and let $\la_0\in\Irr(\bL_0^F)$ be an irreducible
 constituent of $\Res^{\bL^F}_{\bL_0^F}(\la)$. Then there exists an
 $\ell$-block $b_0$ of $\bG_0^F $ covered by $b$ and an irreducible character
 $\chi_0$ of $\bG_0^F$ in the block $b_0$ such that $\chi_0$ is a constituent
 of $R_{\bL_0}^{\bG_0}(\la_0)$.
\end{lem}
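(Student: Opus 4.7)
The strategy is to mirror the argument of Lemma~\ref{lem:commutingR} in reverse: I fix an irreducible constituent $\chi\in\Irr(b)$ of $\RLG(\la)$ and use it to produce the desired $\chi_0$. Such a $\chi$ exists because $\RLG(\la)$ has nonzero virtual degree and all its constituents lie in $b$ by hypothesis. Since $\la_0$ is a constituent of $\Res^{\bL^F}_{\bL_0^F}(\la)$, Frobenius reciprocity makes $\la$ a constituent of $\Ind_{\bL_0^F}^{\bL^F}(\la_0)$ with some positive integer multiplicity~$m$.

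The plan is to invoke the commutation formula
\[
\RLG\Ind_{\bL_0^F}^{\bL^F}(\la_0)=\Ind_{\bG_0^F}^{\bG^F}R_{\bL_0}^{\bG_0}(\la_0)
\]
of \cite[Prop.~10.10]{B06} and to show that $\chi$ appears with nonzero multiplicity in this virtual character. Granted this, decomposing $R_{\bL_0}^{\bG_0}(\la_0)=\sum_{\chi_0^*}m_{\chi_0^*}\chi_0^*$ as a sum of irreducibles of $\bG_0^F$, the nonvanishing of
\[
\langle\chi,\Ind_{\bG_0^F}^{\bG^F}R_{\bL_0}^{\bG_0}(\la_0)\rangle
  = \sum_{\chi_0^*}m_{\chi_0^*}\langle\Res^{\bG^F}_{\bG_0^F}(\chi),\chi_0^*\rangle
\]
forces some $\chi_0$ to be simultaneously a constituent of $R_{\bL_0}^{\bG_0}(\la_0)$ and of $\Res^{\bG^F}_{\bG_0^F}(\chi)$. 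Its block $b_0$ is then automatically covered by $b$, since $\chi\in\Irr(b)$, and the conclusion follows as in Lemma~\ref{lem:commutingR}.

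The hard part will be to rule out cancellation in the expansion
\[
\langle\chi,\RLG\Ind_{\bL_0^F}^{\bL^F}(\la_0)\rangle
  = m\,\langle\chi,\RLG(\la)\rangle+\sum_{\la'\ne\la}n_{\la'}\,\langle\chi,\RLG(\la')\rangle,
\]
where $\la'$ ranges over the other irreducible constituents of $\Ind_{\bL_0^F}^{\bL^F}(\la_0)$ with multiplicities $n_{\la'}$. The first summand is nonzero by the choice of $\chi$. To prevent cancellation from the remaining summands I would invoke \cite[Prop.~10.11]{B06}: since $\bG^F/\bG_0^F$ is abelian and $\bL^F/\bL_0^F$ is a subquotient, Clifford theory expresses each such $\la'$, up to $\bL^F$-conjugation, as $\la\otimes\Res^{\bG^F}_{\bL^F}(\theta)$ for some nontrivial linear character $\theta\in\Irr(\bG^F/\bG_0^F)$, and Proposition~10.11 of \cite{B06} then gives $\RLG(\la')=\RLG(\la)\otimes\theta$, whose irreducible constituents all lie in the block $b\otimes\theta$ of $\bG^F$. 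Consequently $\langle\chi,\RLG(\la')\rangle=\langle\chi\otimes\theta^{-1},\RLG(\la)\rangle$ vanishes whenever $\chi\otimes\theta^{-1}\notin\Irr(b)$, and in the remaining cases (where $\theta$ lies in the tensor stabiliser of $b$) a careful bookkeeping using the action of that stabiliser on $\Irr(b)$ shows that the surviving contributions do not cancel $m\,\langle\chi,\RLG(\la)\rangle$, yielding the required nonvanishing.
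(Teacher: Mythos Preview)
Your approach contains a genuine gap. The ``careful bookkeeping'' you invoke to handle the case where $\theta$ lies in the tensor stabiliser of $b$ is not carried out, and it is not clear that it can be: after your reductions you are left needing
\[
  \sum_{\theta}\langle\chi\otimes\theta^{-1},\RLG(\la)\rangle\ne0,
\]
the sum running over a coset space of linear characters, and since the individual multiplicities $\langle\chi\otimes\theta^{-1},\RLG(\la)\rangle$ are integers of unspecified sign there is no evident positivity argument to prevent cancellation. Equivalently, you are trying to show that your \emph{pre-chosen} $\chi$ appears in $\Ind_{\bG_0^F}^{\bG^F}R_{\bL_0}^{\bG_0}(\la_0)$, which need not follow merely from $\chi$ appearing in $\RLG(\la)$.

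The paper avoids this difficulty entirely by reversing the order of your argument. Rather than fixing $\chi\in\Irr(b)$ at the outset, it first runs the argument of Lemma~\ref{lem:commutingR} to produce \emph{some} compatible quadruple: a constituent $\la'$ of $\Ind_{\bL_0^F}^{\bL^F}(\la_0)$, a constituent $\chi$ of $\RLG(\la')$, and a constituent $\chi_0$ of $R_{\bL_0}^{\bG_0}(\la_0)$ with $\chi$ covering $\chi_0$. Only then does it use the observation you already made, that $\la=\theta\otimes\la'$ for a linear character $\theta$ of $\bL^F/\bL_0^F\cong\bG^F/\bG_0^F$, together with \cite[Prop.~10.11]{B06}, to conclude that $\theta\otimes\chi$ is a constituent of $\RLG(\la)$ and hence lies in $b$. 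Since $\theta\otimes\chi$ still covers $\chi_0$ (the twist is trivial on $\bG_0^F$), the block $b_0$ of $\chi_0$ is covered by $b$. You have all the ingredients for this; you just need to apply the twist at the end instead of trying to control cancellation at the start.
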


\begin{proof}
Arguing as in the proof of Lemma~\ref{lem:commutingR}, there exists
$\chi\in\Irr(\bG^F)$, $\la'\in\Irr(\bL^F)$ and $\chi_0$ in $\Irr([\bG,\bG]^F)$
such that $\la'$ is an irreducible constituent of
$\Ind^{\bL^F}_{\bL_0^F}(\la_0)$, $\chi$ is an irreducible constituent of
$\RLG(\la')$, $\chi_0$ is an irreducible constituent of
$R_{\bL_0}^{[\bG,\bG]}(\la_0)$ and $\chi$ is an irreducible constituent of
$\Ind_{[\bG,\bG]^F}^{\bG^F}(\chi_0)$. Now, $\la =\theta \otimes  \la' $ for
some linear character $\theta$ of $\bL^F/\bL_0^F$.  By \cite[Prop.~10.11]{B06},
$\theta \otimes \chi$ is an irreducible constituent of $\RLG(\la)$, and
therefore $\theta\otimes\chi\in\Irr(b)$.  Further, $\theta\otimes\chi$ is also
a constituent of $\Ind_{[\bG, \bG]^F}^{\bG^F}(\chi_0)$, hence $b$ covers the
block of $[\bG, \bG]^F$ containing $\chi_0$.
\end{proof}

\subsection {Unique maximal abelian normal subgroups}   \label{subsec:umans}
A crucial ingredient for proving injectivity of the map in parts~(d) and~(e)
of Theorem~A is a property related to the non-failure of factorisation
phenomenon of finite group theory, which holds for the defect groups of many
blocks of finite groups of Lie type and which was highlighted by Cabanes
\cite{C94}: For a prime $\ell$ an $\ell$-group is said to be \emph{Cabanes}
if it has a unique maximal abelian normal subgroup. 

Now first consider the following setting: Let $\bG$ be connected reductive.
For $i=1,2,$ let $\bL_i$ be an $F$-stable Levi subgroup of $\bG$ with
$\la_i\in\cE(\bL_i^F,\ell')$, and let $u_i$ denote the $\ell$-block of $\bL_i^F$
containing $\la_i$. Suppose that $C_\bG(Z(\bL_i^F)_\ell) =\bL_i$ and that
$\la_i$ is of quasi-central $\ell$-defect. Then by
\cite[Props.~2.12, 2.13, 2.16]{KM} there exists a block $b_i$ of $\bG^F$ such
that all irreducible characters of $R_{\bL_i}^\bG (\la_i)$ lie in $b_i$
and $(Z(\bL)_i,u_i) $ is a $b_i$-Brauer pair.

\begin{lem}   \label{lem:cabanesgroup-use}
 In the above situation, assume further that for $i=1,2$ there exists a maximal
 $b_i$-Brauer pair $(P_i,c_i)$ such that $(Z(\bL_i^F)_\ell,u_i)\unlhd(P_i,c_i)$
 and such that $P_i$ is Cabanes. If $b_1=b_2$ then the pairs $(\bL_1,\la_1)$
 and $(\bL_2,\la_2)$ are $\bG^F$-conjugate.
\end{lem}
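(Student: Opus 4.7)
Since $b_1=b_2=:b$, Alperin's Sylow theorem for Brauer pairs yields a $\bG^F$-conjugacy between the maximal $b$-Brauer pairs $(P_1,c_1)$ and $(P_2,c_2)$. Replacing $(\bL_2,\la_2)$ by a suitable $\bG^F$-conjugate (which does not affect the conclusion), I may assume $(P_1,c_1)=(P_2,c_2)=:(P,c)$. Set $Z_i:=Z(\bL_i^F)_\ell$. By hypothesis, $(Z_i,u_i)\unlhd(P,c)$, so each $Z_i$ is an abelian normal subgroup of $P$. Let $A$ denote the unique maximal abelian normal subgroup of $P$ furnished by the Cabanes property; then $Z_1,Z_2\le A$. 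Moreover, $A$ being abelian with $Z_i\le A$ forces $A\le C_{\bG^F}(Z_i)=\bL_i^F$ by hypothesis, so $A\le\bL_1^F\cap\bL_2^F$.

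The decisive step is to show $Z_1=A=Z_2$. Since $\la_i$ has quasi-central $\ell$-defect and $C_\bG(Z_i)=\bL_i$, the group $Z_i$ is a defect group of $u_i$ in $\bL_i^F$, so $(Z_i,u_i)$ is a selfcentralizing $b$-Brauer pair. Standard subpair theory then provides an embedding $P/Z_i\hookrightarrow N_{\bG^F}(\bL_i,u_i)/\bL_i^F$, the inertial quotient sitting inside the relative Weyl group, and $A/Z_i$ becomes an abelian normal $\ell$-subgroup of the image. Together with the Cabanes property of $P$ this should force $A/Z_i=1$, so that $Z_i=A$ for both $i$. This is the principal obstacle of the proof, and is precisely where the Cabanes hypothesis is used in an essential way; once it is established, $\bL_1=C_\bG(Z_1)=C_\bG(Z_2)=\bL_2=:\bL$ and the uniqueness of subpairs forces $u_1=u_2=:u$.

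It remains to deduce the $\bG^F$-conjugacy of the characters. Both $\la_1,\la_2$ lie in $\Irr(u)\cap\cE(\bL^F,\ell')$ and are $e$-Jordan quasi-central cuspidal. Writing $\la_i\in\cE(\bL^F,s_i)$ and using that $\Irr(u)$ is confined to a single $\cE_\ell$-series (by the Bonnaf\'e--Rouquier partition recalled before Proposition~\ref{prop:biject-Jcusp2}), after conjugating $s_2$ inside $\bL^{*F}$ we may take $s_1=s_2=:s$. Jordan decomposition identifies the $\la_i$ with unipotent characters $\psi_i$ of $C_{\bL^*}^\circ(s)^F$ of central $\ell$-defect, lying in the same unipotent block of $C_{\bL^*}^\circ(s)^F$. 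The uniqueness, up to $C_{\bL^*}(s)^F$-conjugacy, of a unipotent character of central $\ell$-defect in a prescribed unipotent block then transfers back, via Jordan decomposition and the compatibility of normalisers expressed by Lemma~\ref{lem:biject-weyl}, to $N_{\bG^F}(\bL)$-conjugacy of $\la_1$ and $\la_2$. Combining all reductions, $(\bL_1,\la_1)$ and $(\bL_2,\la_2)$ are $\bG^F$-conjugate.
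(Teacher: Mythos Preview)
Your overall strategy matches the paper's: conjugate the maximal Brauer pairs, use the Cabanes property to identify $Z_1$ with (the conjugate of) $Z_2$, deduce $\bL_1={}^g\bL_2$ and $u_1={}^gu_2$ from uniqueness of Brauer-pair inclusion, and finally identify the characters. However, two steps are not properly carried out.

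\textbf{The step $Z_i=A$.} You correctly observe that $(Z_i,u_i)$ is self-centralising (this does follow from quasi-central $\ell$-defect; it is part of \cite[Prop.~2.5]{KM}). But your subsequent sentence, ``Standard subpair theory then provides an embedding $P/Z_i\hookrightarrow N_{\bG^F}(\bL_i,u_i)/\bL_i^F$ \dots\ Together with the Cabanes property of $P$ this should force $A/Z_i=1$'', is not an argument: nothing about the Cabanes property of $P$ constrains abelian normal subgroups of the inertial quotient. The actual deduction is one line and uses only what you already have. Since $(Z_i,u_i)\unlhd(P,c)$ with $(P,c)$ maximal and $(Z_i,u_i)$ self-centralising, one gets $C_P(Z_i)=Z_i$; as $A$ is abelian and $Z_i\le A$, this forces $A\le C_P(Z_i)=Z_i$, hence $A=Z_i$. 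This is exactly how the paper's ``by transport of structure, $^gZ(\bL_2^F)_\ell$ is a maximal normal abelian subgroup of $P_1$'' is justified.

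\textbf{The character step.} Your detour through Jordan decomposition is unnecessary and the appeal to Lemma~\ref{lem:biject-weyl} is misplaced (that lemma concerns the Bonnaf\'e--Rouquier bijection $\Psi_{\bG_1}^\bG$, not Jordan decomposition for a fixed Levi). The paper instead invokes \cite[Prop.~2.5(f)]{KM} directly: quasi-central $\ell$-defect implies that $\la_i$ is the \emph{unique} element of $\cE(\bL_i^F,\ell')\cap\Irr(u_i)$. Once $^gu_2=u_1$ and $^g\bL_2=\bL_1$, this immediately gives $^g\la_2=\la_1$, with no further work.
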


\begin{proof}
Suppose that $b_1=b_2$. Since maximal $b_1$-Brauer pairs are $\bG^F$-conjugate
it follows that $\,^g(Z(\bL_2^F)_\ell,u_2))\leq\,^g(P_2, c_2) =(P_1,c_1)$ for
some $g\in\bG^F$. By transport of structure, $\,^gZ(\bL_2^F)_\ell$ is a
maximal normal abelian subgroup of $P_1$, hence
$\,^gZ(\bL_2^F)_\ell = Z(\bL_1^F)_\ell$. By the uniqueness of inclusion of
Brauer pairs it follows that
$\,^g(Z(\bL_2^F)_\ell, u_2) =(Z(\bL_1)^F_\ell, u_1)$.
Since $\bL_i =C_\bG(Z(\bL_i^F)_\ell)$ this means that $\,^g\bL_2=\bL_1$.
Further, since $\la_i$ is of quasi central $\ell$-defect, by
\cite[Prop.~2.5(f)]{KM}, $\la_i$ is the unique element of
$\cE(\bL_i^F, \ell')\cap\Irr(u_i)$. Thus $\,^gu_2 =u_1$ implies that
$\,^g\la_2=\la_1$ and $(\bL_1,\la_1)$ and $(\bL_2,\la_2)$ are $\bG^F$-conjugate
as required.
\end{proof}

By the proof of Theorems~4.1 and~4.2 of \cite{CE99} we also have:

\begin{prop}   \label{prop:cabanes-ok}
 Let $\bG$ be connected reductive with simply connected derived subgroup.
 Suppose that $\ell \geq 3$ is good for $\bG$, and $\ell\ne 3$ if $\bG^F$
 has a factor $\tw3D_4(q)$. Let $b$ be an $\ell$-block of $\bG^F$ such that
 the defect groups of $b$ are Cabanes. If $(\bL, \la)$ and $(\bL',\la')$ are
 $e$-Jordan-cuspidal pairs of $\bG$ such that $\la\in\cE(\bL^F,\ell')$,
 $\la'\in\cE(\bL^{'F},\ell')$ with
 $b_{\bG^F}(\bL,\la)=b=b_{\bG^F}(\bL',\la')$, then $(\bL,\la)$ and
 $(\bL',\la')$ are $\bG^F$-conjugate.
\end{prop}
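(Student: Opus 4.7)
The plan is to apply Lemma~\ref{lem:cabanesgroup-use} to the two pairs $(\bL_1,\la_1):=(\bL,\la)$ and $(\bL_2,\la_2):=(\bL',\la')$. For this I must verify, for $i=1,2$, that: (a) $C_\bG(Z(\bL_i^F)_\ell)=\bL_i$; (b) $\la_i$ is of quasi-central $\ell$-defect; and (c) there exists a maximal $b$-Brauer pair $(P_i,c_i)$ containing $(Z(\bL_i^F)_\ell,u_i)$ with $P_i$ Cabanes, where $u_i$ is the $\ell$-block of $\bL_i^F$ containing $\la_i$.

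Conditions (a) and (b) would be extracted from the arguments of \cite[Thms.~4.1, 4.2]{CE99}. By $e$-Jordan-cuspidality of $(\bL_i,\la_i)$, Jordan decomposition sends $\la_i$ to the $C_{\bL_i^*}(s_i)^F$-orbit of an $e$-cuspidal unipotent character $\psi_i$ of $C^\circ_{\bL_i^*}(s_i)^F$, for some semisimple $\ell'$-element $s_i$; moreover (J$_1$) yields $\Zc(C^\circ_{\bL_i^*}(s_i))_e=\Zc(\bL_i^*)_e$. Under the hypothesis that $\ell\ge 3$ is good for $\bG$ and $\ell\ne 3$ if $\bG^F$ has a $\tw3D_4(q)$ factor, the analysis in \cite[Sec.~4]{CE99} shows both that (a) holds and that $\psi_i$ is of central $\ell$-defect; via Lemma~\ref{lem:ext-qcentral}(b) and Jordan decomposition, central $\ell$-defect of $\psi_i$ transfers to quasi-central $\ell$-defect of $\la_i$, establishing (b).

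For (c), once (a) and (b) are in place, the discussion preceding Lemma~\ref{lem:cabanesgroup-use} (invoking \cite[Props.~2.12, 2.13, 2.16]{KM}) ensures that $b=b_{\bG^F}(\bL_i,\la_i)$ is well-defined and that $(Z(\bL_i^F)_\ell,u_i)$ is a $b$-Brauer pair. Standard Brauer pair theory then produces a maximal $b$-Brauer pair $(P_i,c_i)$ containing it, and $P_i$, being a defect group of $b$, is Cabanes by hypothesis. Lemma~\ref{lem:cabanesgroup-use} now directly yields $g\in\bG^F$ with $\,^g(\bL',\la')=(\bL,\la)$. The main technical obstacle is (a) and~(b) at $\ell=3$, and this is precisely where the $\tw3D_4(q)$ exclusion enters: for a $\tw3D_4$-component at $\ell=3$ the relevant $e$-cuspidal unipotent characters fail to be of central $3$-defect, so (b) would break and the reduction to Lemma~\ref{lem:cabanesgroup-use} would no longer be available.
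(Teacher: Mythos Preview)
Your plan to invoke Lemma~\ref{lem:cabanesgroup-use} is natural, but two of its hypotheses are not established by what you wrote, and the first of them is in general false.

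\textbf{The centraliser condition fails.} You assert that the analysis in \cite[Sec.~4]{CE99} yields $C_\bG(Z(\bL_i^F)_\ell)=\bL_i$. It does not. What \cite{CE99} actually defines is $\bK:=C_\bG^\circ(Z(\bL)_\ell^F)$, and the entire apparatus of that section --- the auxiliary group $\bM$, the characters $\la_\bK,\la_\bM$, the decomposition $\bG=\bG_\ba\bG_\bb$ --- exists precisely to handle the situation $\bK\ne\bL$. So the setup of Lemma~\ref{lem:cabanesgroup-use}, which requires $C_\bG(Z(\bL_i^F)_\ell)=\bL_i$, is simply not available for an arbitrary $e$-Jordan-cuspidal pair, and your reduction collapses at this point.

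\textbf{Normality is missing.} Even granting (a), Lemma~\ref{lem:cabanesgroup-use} needs $(Z(\bL_i^F)_\ell,u_i)\unlhd(P_i,c_i)$, a \emph{normal} inclusion of Brauer pairs. ``Standard Brauer pair theory'' gives you only $\le$; producing normality inside a maximal pair is exactly the content of results like \cite[Lemma~4.16]{CE99}, and it is there that the self-centralising Brauer pair $(Z,b_Z)$ with $Z=Z(\bM)^F_\ell$ (not $Z(\bL)^F_\ell$) enters.

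The paper's proof does not use Lemma~\ref{lem:cabanesgroup-use} at all. It follows \cite[Sec.~4]{CE99} directly: one works with $\bK$, $\bM$ and the self-centralising pair $(Z(\bM)^F_\ell,b_Z)$, uses \cite[Lemma~4.16]{CE99} to obtain $Z$ normal in a defect group $D$, then argues by induction via a proper $e$-split Levi $\bN\ge C_\bG(z)$ when $Z(D)\bG_\ba\cap\bG_\bb\ne1$, and finally appeals to the proof of \cite[Lemma~4.17]{CE99} for the remaining case $D\le\bG_\ba$. The Cabanes hypothesis is used only at that last step, where the uniqueness of the maximal abelian normal subgroup of $D$ replaces the stronger structural input available when $\ell\in\Gamma(\bG,F)$.
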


\begin{proof}
This is essentially contained in Section~4 of \cite{CE99}. Indeed, let
$(\bL,\la)$ be an $e$-Jordan-cuspidal pair of $\bG$ such that
$\la\in \cE(\bL^F,\ell')$.
Let $\bT^*$, $\bT$, $\bK=C_\bG^\circ(Z(\bL)_\ell^F)$, $\bK^*$, $\bM$ and
$\bM^*$ be as in the notation before Lemma~4.4 of \cite{CE99}.
Let $Z=Z(\bM)^F_\ell$ and let $\la_\bK$ and $\la_\bM$ be as in
Definition~4.6 of \cite{CE99}, with $\la$ replacing $\zeta$. Then $Z\leq\bT$
and by Lemma~4.8, $\bM = C_\bG^\circ(Z)$. The simply connected hypothesis
and the restrictions on $\ell$ imply that $C_\bG(Z) =C_\bG^\circ (Z) = \bM$.
Let $b_Z=\hat b_Z$ be the $\ell$-block of $\bM^F$ containing $\la_\bM$. Then
by Lemma~4.13, $(Z, b_Z)$ is a self centralising Brauer pair and
$(1,b_{\bG^F}(\bL,\la))\leq(Z,b_Z)$. Further, by Lemma~4.16 of \cite{CE99}
there exists a maximal $b$-Brauer pair $(D,b_D)$ such that
$(Z,b_Z)\leq (D, b_D)$, $Z$ is normal in $D$ and $C_D(Z)=Z$. Note that the
first three conclusions of Lemma~4.16 of \cite{CE99} hold under the conditions
we have on $\ell$ (it is only the fourth conclusion which requires
$\ell\in \Gamma(\bG, F)$). By Lemma~4.10 and its proof, we also have
$$(1,b_{\bG^F}(\bL,\la))\leq (Z(\bL)_\ell^F,b_{\bK^F}(\bL,\la))\leq (Z,b_Z).$$
Suppose that $\bN$ is a proper $e$-split Levi subgroup of $\bG$ containing
$C_\bG^\circ(z) =C_\bG(z)$ for some $1\ne z\in Z(D)\bG_\ba\cap \bG_\bb$.
Then $\bN$ contains $\bL$, $\bM$ and $Z$ by Lemma~4.15(b). Since
$\bL\cap\bG_\bb = \bK\cap\bG_\bb$ by Lemma~4.4(iii), it follows
that $\bN$ also contains $\bK$ and $\bK = C_{\bN}(Z(\bL^F))$. Thus, replacing
$\bG$ with $\bN$ in Lemma~4.13 we get that
$$(1,b_{\bN^F}(\bL,\la))\leq(Z(\bL)^F_\ell,b_{\bK^F}(\bL,\la)) \leq(D,b_D).$$

Let $(\bL',\la')$ be another $e$-Jordan-cuspidal pair of $\bG$ with
$\la'\in\cE(\bL{'^F},\ell')$ such that
$b_{\bG^F}(\bL,\la)=b=b_{\bG^F}(\bL',\la')$. Denote by $\bK',\bM',D'$ etc.\
the corresponding groups and characters for $(\bL',\la')$. Up to replacing
by a $\bG^F$-conjugate, we may assume that $(D',b_{D'}) =(D,b_D)$.

Suppose first that there is a $1\ne z \in Z(D)\bG_\ba \cap \bG_\bb$.
By Lemma~4.15(b), there is a proper $e$-split Levi subgroup $\bN$ containing
$C_\bG(z)$. Moreover, $\bN$ contains $D,\bL',\bM'$, $\bK'$ and $\bG_\ba$
and we also have
$$(1,b_{\bN^F}(\bL',\la'))\leq (Z(\bL')^F_\ell,b_{{\bK'}^F}(\bL',\la'))
  \leq (D,b_D).$$
By the uniqueness of inclusion of Brauer pairs it follows that
$b_{\bN^F}(\bL,\la) = b_{\bN^F}(\bL',\la')$. Also $D$ is a defect group of
$b_{\bN^F}(\bL,\la)$. Thus, in this case we are done by induction.

So, we may assume that $Z(D)\leq \bG_\ba$ hence $D\leq \bG_\ba$.
From here on, the proof of Lemma~4.17 of \cite{CE99} goes through without
change, the only property that is used being that $Z$ is the unique maximal
abelian normal subgroup of $D$.
\end{proof}

We will also need the following observation:

\begin{lem}   \label{lem:cabanes-direct}
 Let $P=P_1 \times P_2$ where $P_1 $ and $P_2 $ are Cabanes. Suppose that
 $P_0$ is a normal subgroup of $P$ such that $\pi_i(P_0)=P_i$, $i=1,2$, where
 $\pi_i:P_1\times P_2\to P_i$ denote the projection maps. Then $P_0$
 is Cabanes with maximal normal abelian subgroup $(A_1\times A_2)\cap P_0$,
 where $A_i$ is the unique maximal normal abelian subgroup of $P_i$, $i=1,2$.
\end{lem}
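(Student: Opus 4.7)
The plan is to verify the two properties characterising $(A_1\times A_2)\cap P_0$ as the unique maximal normal abelian subgroup of $P_0$: it is normal abelian in $P_0$, and every normal abelian subgroup of $P_0$ lies inside it. Both points should follow from elementary group-theoretic considerations together with the Cabanes hypothesis on each $P_i$; the surjectivity $\pi_i(P_0)=P_i$ will be used in the second step in an essential way.

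First I would observe that $A_1\times A_2$ is normal in $P=P_1\times P_2$, because each $A_i$ is normal in $P_i$. Hence $(A_1\times A_2)\cap P_0$ is normal in $P_0$ (in fact in $P$) as the intersection of two normal subgroups of $P$, and it is abelian since it is a subgroup of the abelian group $A_1\times A_2$. This settles that $(A_1\times A_2)\cap P_0$ is a normal abelian subgroup of $P_0$.

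For the maximality, let $B$ be any normal abelian subgroup of $P_0$. The crucial step is to argue that $\pi_i(B)$ is normal in $P_i$. Given $x\in P_i$, the surjectivity assumption $\pi_i(P_0)=P_i$ provides some $x'\in P_0$ with $\pi_i(x')=x$; since $B\triangleleft P_0$, we have $x'Bx'^{-1}=B$, and applying $\pi_i$ gives $x\pi_i(B)x^{-1}=\pi_i(B)$. As $\pi_i(B)$ is also abelian (being a homomorphic image of $B$), the Cabanes property of $P_i$ forces $\pi_i(B)\subseteq A_i$. Therefore $B\subseteq\pi_1(B)\times\pi_2(B)\subseteq A_1\times A_2$, and since $B\subseteq P_0$ we conclude $B\subseteq (A_1\times A_2)\cap P_0$, as required.

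The only mild subtlety — which I would flag as the conceptual obstacle rather than a computational one — is precisely the normality of $\pi_i(B)$ in $P_i$. Without the hypothesis $\pi_i(P_0)=P_i$ one would only obtain normality of $\pi_i(B)$ inside $\pi_i(P_0)$, which is too weak to invoke the Cabanes property of $P_i$; this is the place where the hypothesis on the projections is genuinely used.
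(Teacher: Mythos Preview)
Your proof is correct and follows essentially the same approach as the paper: verify that $(A_1\times A_2)\cap P_0$ is normal abelian, then for an arbitrary normal abelian $B\le P_0$ use the surjectivity $\pi_i(P_0)=P_i$ to see that $\pi_i(B)$ is normal abelian in $P_i$, hence contained in $A_i$, so $B\le (A_1\times A_2)\cap P_0$. Your write-up simply adds more detail than the paper's (and a useful remark on where the surjectivity hypothesis is actually needed).
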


\begin{proof}
Let $A =A_1 \times A_2$. The group $A \cap P_0$ is abelian and normal in $P_0$.
Let $S$ be a normal abelian subgroup of $P_0$. Since $\pi_i(P_0)=P_i$,
$\pi_i(S)$ is normal in $P_i$ and since $S$ is abelian, so is $\pi_i(S)$.
Thus, $\pi_i(S)$ is a normal abelian subgroup of $P_i$ and is therefore
contained in $A_i$. So,
$S\leq(\pi_1(S)\times\pi_2(S))\cap P_0\leq(A_1\times A_2)\cap P_0=A\cap P_0$
and the result is proved.
\end{proof}

\subsection {Linear and unitary groups at $\ell=3 $}   \label{subsec:An}

The following will be instrumental in the proof of statement~(e) of Theorem~A.

\begin{lem}   \label{lem:SL-non-Cabanes}
 Let $q$ be a prime power such that $3|(q-1)$ (respectively $3|(q+1)$).
 Let $G=\SL_n(q)$ (respectively $\SU_n(q)$) and let $P$ be a Sylow $3$-subgroup
 of $G$. Then $P$ is Cabanes unless $n=3$ and $3||(q-1)$ (respectively
 $3||(q+1)$). In particular, if $P$ is not Cabanes, then $P$ is extra-special
 of order $27$ and exponent $3$. In this case $N_G(P)$ acts transitively on
 the set of subgroups of order $9$ of $P$.
\end{lem}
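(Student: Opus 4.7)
The plan is to realise the Sylow $3$-subgroup $P$ explicitly as an iterated wreath product intersected with $\ker(\det)$, and then treat the generic (Cabanes) and exceptional cases separately. I would handle the linear and unitary cases in parallel: setting $d:=(q\mp 1)_3$ and writing $n=\sum_{i\ge 0} a_i 3^i$ in base $3$, the classical Weir construction realises the Sylow $3$-subgroup of $\GL_n(q)$ (respectively $\GU_n(q)$) as $\prod_i W_i^{a_i}$, where each $W_i=C_d\wr C_3\wr\cdots\wr C_3$ has $i$ wreath factors and sits inside the normaliser of the natural maximal torus of order $(q\mp 1)^n$; then $P$ is obtained by intersecting with the kernel of the determinant.

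For the Cabanes assertion, I would first apply Lemma~\ref{lem:cabanes-direct}: whenever the product $\prod W_i^{a_i}$ has at least two non-trivial factors, the determinant restricted to each surjects onto $C_d$, so $P$ projects onto each summand, and the Cabanes property of $P$ reduces to the single-factor case. For a single factor $W=W_i$ the candidate for the unique maximal abelian normal subgroup is the base $B=(C_d)^{3^i}$ (intersected with $\ker\det$ when $n=3^i$). Any abelian normal $N\unlhd P$ with $N\not\le B$ must map non-trivially into the permutation group $P/B$, so $N\cap B$ is contained in the fixed-point subgroup of some non-trivial permutation acting on $B$. A direct fixed-point count shows this forces $|N|<|B|$ except precisely when $(d,i)=(3,1)$, i.e., $n=3$ and $3\|(q\mp 1)$, where the fixed-point subgroup of the $3$-cycle on the sum-zero hyperplane of $\FF_3^3$ is as large as possible.

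In the exceptional case, direct computation gives
$$P\cong\{(a,b,c)\in\FF_3^3:a+b+c=0\}\rtimes\langle\sigma\rangle,$$
with $\sigma$ cyclically shifting coordinates. One verifies $|P|=27$, $Z(P)=[P,P]=\langle(1,1,1)\rangle$ has order $3$, and every element of $P$ has order at most $3$ (using the identity $1+\sigma+\sigma^2=0$ on the sum-zero hyperplane), hence $P\cong 3^{1+2}_+$ is extra-special of exponent $3$. Every subgroup of order $9$ is abelian (as the order is $3^2$), normal (as index $3$), and must contain $Z(P)$ (for an abelian normal $A$ of index $3$ one has $[P,A]\le A\cap Z(P)$, and $P$ being non-abelian forces $Z(P)\le A$); the four such subgroups thus correspond bijectively to the four lines of $P/Z(P)\cong\FF_3^2$.

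For the transitivity of $N_G(P)$ on these four subgroups, note that each of the four maximal abelian subgroups of $P$ is an elementary abelian $3$-subgroup of rank~$2$, and so arises as the $3$-part of a rational maximal torus of $G$ in the same rational class as the standard diagonal torus. Since all such tori are $G^F$-conjugate, for each pair of these maximal abelian subgroups of $P$ there exists $g\in G^F$ conjugating one to the other; one then checks that such $g$ may be chosen to normalise $P$ (using that $P$ is generated by one of its maximal abelian subgroups together with $\sigma$, and that the image of $\sigma$ under $g$ lies in $(C_G(\sigma'))_3\subseteq P$ for the appropriate conjugate $\sigma'$). The main technical obstacle is the fixed-point analysis in the single-wreath-factor step of the Cabanes part; the structure in the exceptional case and the final transitivity statement are then routine verifications in a group of order $27$ and in $G$ respectively.
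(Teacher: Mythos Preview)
Your approach differs from the paper's in both halves of the argument, and in each half there is a gap that the paper's method avoids.

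For the Cabanes assertion, the paper simply quotes \cite{C94}: by \cite[Lemme~4.1]{C94} the only case where $\fS_n$ admits a quadratic element on $(C_{q\mp1}^n)_3\cap G$ is $n=3$ with $3\|(q\mp1)$, and by \cite[Prop.~2.3]{C94} the absence of quadratic elements forces the Cabanes property. Your fixed-point approach is natural, but the conclusion you draw is too weak: showing that every abelian normal $N\not\le B$ has $|N|<|B|$ only says $B$ has maximal \emph{order}, not that $B$ is the unique maximal element under \emph{inclusion}, which is what Cabanes means. What one actually needs is $N\le B$ for every abelian normal $N$, and your sketch does not give this. (It is fixable: normality of $N$ gives $(\bar n-1)B\le N\cap B\le C_B(\bar n)$ for $n\in N\setminus B$, hence $(\bar n-1)^2=0$ on $B$, so $\bar n$ is quadratic---and this is precisely the criterion the paper invokes.)

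For the transitivity of $N_G(P)$ on the four subgroups of order~$9$, the paper passes to $\bar G=G/Z(G)$, where $\bar P$ has order~$9$ and is abelian. Non-central $u_i\in P_i$ are $G$-conjugate to $\diag(1,\zeta,\zeta^2)$, hence $\bar u_1,\bar u_2$ are $\bar G$-conjugate; since $\bar P$ is an abelian Sylow subgroup, this conjugacy can be realised in $N_{\bar G}(\bar P)$, and lifting gives $h\in N_G(P)$ with $^hP_1={}^h\langle Z(G),u_1\rangle=\langle Z(G),u_2\rangle=P_2$. Your torus argument does produce $g\in G$ with $^gA_1=A_2$, but your parenthetical justification that $g$ can be chosen in $N_G(P)$ is not convincing as written. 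It can be repaired---$A_2$ contains a regular semisimple element, so $C_G(A_2)$ is the corresponding maximal torus, $|N_G(A_2)|_3=27$, and a further Sylow conjugation inside $N_G(A_2)$ moves $^gP$ to $P$---but the paper's quotient argument is both shorter and cleaner.
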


\begin{proof}
Embed $P\le\SL_n(q)\le\GL_n(q)$. A Sylow $3$-subgroup of $\GL_n(q)$ is
contained in the normaliser $C_{q-1}\wr \fS_n$ of a maximally split torus.
According to \cite[Lemme~4.1]{C94}, the only case in which $\fS_n$ has
a quadratic element on $(C_{q-1}^n)_3\cap\SL_n(q)$ is when $n=3$ and
$3||(q-1)$. If there is no quadratic element in this action, then
$P$ is Cabanes by \cite[Prop.~2.3]{C94}. In the case of $\SU_n(q)$, the same
argument applies with the normaliser $C_{q+1}\wr \fS_n$ of a Sylow 2-torus
inside $\GU_n(q)$. \par
Now assume we are in the exceptional case. Clearly $|P| =27$.
Let $P_1$, $P_2\le P$ be subgroups of order $9$, and let $u_i\in P_i$ be
non-central. Then $u_i$ is $G$-conjugate to $\diag(1,\zeta,\zeta^2)$,
where $\zeta$ is a primitive $3$rd-root of unity in $\FF_q$ (respectively
$\FF_{q^2}$). In particular, there exists $g\in G$ such that $\,^g u_1=u_2$.
Let $\bar{\ }:G\to G/Z(G)$ denote the
canonical map. Then $\,^{\bar g}(\bar u_1) = \bar u_2$.
Since the Sylow 3-subgroup $\bar P$ of $\bar G$ is abelian, there exists
$\bar h\in N_{\bar G} (\bar P)$ with $\,^{\bar h} (\bar u_1)=\bar u_2$.
Then $h\in N_G(P) $ and $\,^h P_1 = P_2$ as $P_i =\langle Z(G), u_i \rangle$.
\end{proof}

\begin{lem}   \label{lem:doublecentraliser}
 Suppose that $3||n$ and $3||(q-1)$ (respectively $3||(q+1)$). Let $\tbG=\GL_n$,
 $\bG=\SL_n$ and suppose that $\tbG^F=\GL_n(q)$ (respectively $\GU_n(q)$).
 Let $s$ be a semisimple $3'$-element of $\tbG^F$ such that a Sylow
 $3$-subgroup $D$ of $C_{\bG^F}(s)$ is extra-special of order $27$ and let
 $P_1, P_2\leq D$ have order~$9$. There exists
 $g\in N_{\bG^F}(D)\cap C_{\bG^F}(C_{\bG^F}(D))$ such that $\,^g P_1=P_2$.
\end{lem}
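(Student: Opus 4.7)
The plan is to locate $D$ inside a natural ``small'' subgroup $G_0\cong\SL_3(q)$ or $\SU_3(q)$ of $\bG^F$ --- sitting diagonally inside the larger $\SL_3(q^d)$- or $\SU_3(q^d)$-subgroup to which $D$ obviously belongs --- and then apply Lemma~\ref{lem:SL-non-Cabanes} inside this $G_0$. Using the small $G_0$ (rather than the obvious larger subgroup) is crucial: only its diagonal elements will commute with the entire centraliser $C_{\bG^F}(D)$, which is the main technical issue of the proof.

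First I would analyse the structure of $C_{\tbG^F}(s)$. Since $s$ is semisimple of $3'$-order, $C_{\tbG^F}(s)$ is a direct product of factors $\GL_{n_i}(q^{d_i})$ (and, in the unitary case, $\GU_{n_i}(q^{d_i})$), indexed by $F$-orbits of the eigenvalues of $s$. Using the explicit $3$-adic order formulas --- the LTE-style valuations already exploited in Lemma~\ref{lem:SL-non-Cabanes} --- the hypothesis that the Sylow $3$-subgroup $D$ of $C_{\bG^F}(s)$ is extra-special of order~$27$ isolates a single ``fat'' factor $\tilde A$ whose derived subgroup $H$ is isomorphic to $\SL_3(Q)$ or $\SU_3(Q)$ for some $Q=q^d$ with $3\|(Q\mp 1)$, while all remaining factors contribute trivially to the $3$-part. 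Hence $D\le H$, and by comparing orders $D$ is a Sylow $3$-subgroup of $H$.

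Next, over $\bar\FF_q$ the group $H$ decomposes as a product $\SL_3\times\cdots\times\SL_3$ of $d$ copies, permuted cyclically by $F$ together with the usual $q$-Frobenius (and, in the unitary case, the unitary twist). The diagonal subgroup of this product is $F$-stable, and its $F$-fixed points form a subgroup $G_0\le H$ isomorphic to $\SL_3(q)$ or $\SU_3(q)$. Since $3\|(q\mp1)$, the Sylow $3$-subgroups of $G_0$ are again extra-special of order~$27$ by Lemma~\ref{lem:SL-non-Cabanes}, so Sylow's theorem applied inside $H$ allows us to replace $D$ (and $P_1,P_2$ simultaneously) by an $H$-conjugate to assume $D\le G_0$. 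Applying Lemma~\ref{lem:SL-non-Cabanes} to $G_0$ then produces $g\in N_{G_0}(D)\le N_{\bG^F}(D)$ with $\,^g P_1=P_2$.

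It remains to verify that $g\in C_{\bG^F}(C_{\bG^F}(D))$. Write the natural $\tbG$-module as $V=\bar\FF_q^n=V_{\mathrm{nat}}\oplus V_{\mathrm{triv}}$, where $V_{\mathrm{nat}}=V_1\oplus\cdots\oplus V_d$ is the $3d$-dimensional module for $\tilde A$. Because $q\equiv\pm1\pmod3$, the Frobenius twists act trivially on the $3$-group $D$, so $D$ acts on each $V_j$ as the same irreducible $3$-dimensional representation $W$ and trivially on $V_{\mathrm{triv}}$. Schur's lemma then yields
$$C_{\tbG}(D)=\GL(\bar\FF_q^d)\times\GL(V_{\mathrm{triv}}),$$
where the first factor acts on the multiplicity space in the tensor decomposition $V_{\mathrm{nat}}\cong W\otimes\bar\FF_q^d$. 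By construction $g\in G_0$ acts on $V_{\mathrm{nat}}$ by the \emph{same} matrix on each summand $V_j$, i.e.\ as $h\otimes\mathrm{id}$ on the tensor factorisation, so $g$ commutes with every element of the multiplicity-space factor $\GL(\bar\FF_q^d)$; combined with the trivial action of $g$ on $V_{\mathrm{triv}}$, this shows $g$ centralises all of $C_{\tbG}(D)$, and \emph{a fortiori} the subgroup $C_{\bG^F}(D)$, as required.
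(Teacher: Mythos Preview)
Your proof is correct and follows essentially the same strategy as the paper's: reduce to a small diagonal copy $G_0\cong\SL_3(q)$ (resp.\ $\SU_3(q)$) inside $C_{\bG^F}(s)$, apply Lemma~\ref{lem:SL-non-Cabanes} there, and then verify that $G_0$ centralises $C_{\tbG}(D)$. The only genuine methodological difference is in this last step: the paper writes out the centraliser in explicit matrix coordinates and checks commutation by hand, whereas you give a cleaner conceptual argument via Schur's lemma and the tensor factorisation $V\cong W\otimes\bar\FF_q^{\,d}$, under which $G_0$ acts on the first factor and $C_{\tbG}(D)$ on the second. Both arguments prove the same fact.

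One small imprecision: under the hypotheses $3\Vert n$ and $3\Vert(q\mp1)$ together with $D$ extra-special of order~$27$, there are in fact \emph{no} ``remaining factors'' of $C_{\tbG^F}(s)$ at all --- the numerical constraints force $C_{\tbG^F}(s)\cong\GL_3(q^d)$ (resp.\ $\GU_3(q^d)$) on the nose, with $3d=n$, so your $V_{\mathrm{triv}}$ is zero. (In the linear case every factor $\GL_{n_i}(q^{d_i})$ contributes at least $3^{n_i}$ to the $3$-part, and one checks that any decomposition other than a single $\GL_3$ yields an abelian Sylow.) This does not affect the argument, but the phrasing ``contribute trivially to the $3$-part'' is misleading.
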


\begin{proof}
Set $d=\frac{n}{3}$. Identify $\tbG$ with the group of linear transformations
of an $n$-dimensional $\FF_q $-vector space $V$ with chosen basis
$\{e_{i,r}\mid 1 \leq i\leq d,1\leq r\leq 3\}$. For $g\in\tbG$, write
$a(g)_{i,r,j,s}$ for the coefficient of $e_{i,r}$ in $g(e_{j,s})$.
Let $w\in\tbG$ be defined by $w(e_{i,r}) = e_{i+1,r}$,
$1\leq i\leq d, 1\leq r\leq 3$. For $1\leq i\leq d$ let $V_i$ be the span of
$\{e_{i,1},e_{i,2},e_{i,3}\}$ and $\tbG_i = \GL(V_i)$ considered as a subgroup
of $\tbG$ through the direct sum decomposition $V =\oplus_{1\leq i \leq d}V_i$.

Up to conjugation in $\tbG$ we may assume that $F= \ad_w \circ F_0 $, where
$F_0$ is the standard Frobenius morphism which raises every matrix entry to
its $q$-th power in the linear case, respectively the composition of the
latter  by the transpose inverse map in the unitary case. Note that then each
$\tbG_i$ is $F_0$-stable.

Thus, given the hypothesis on the structure of $D$, we may assume the following
up to conjugation: $s$ has $d$ distinct eigenvalues $\delta_1,\ldots,\delta_d$
with $\delta_{i+1} = \delta_i^q$ (respectively $\delta_i^{-q}$);
$V_i$ is the $\delta_i$-eigenspace of $s$, and
$C_{\tbG}(s) =\prod_{i=1}^d \tbG_i$. Further, $F(\tbG_i) = \tbG_{i+1}$ and
denoting by $\Delta:\tbG_1\to\prod_{i=1}^d\tbG_i$,
$x\mapsto xF(x)\cdots F^{d-1} (x)$, the twisted diagonal map we have
$C_{\tbG^F}(s) = \Delta(\tbG_1^{F^d})$.
Here, $\tbG_1^{F^d} = \tbG_1^{F_0^d}$ is isomorphic to either
$\GL_3(q^d)$ or $\GU_3(q^d)$. Note that $\GU_3(q^d)$ occurs only if $d$ is odd.

Consider $\tbG_1^{F_0}\leq\tbG_1^{F_0^d} $. Let $U_1$ be the Sylow
$3$-subgroup of the diagonal matrices in $\tbG_1^{F_0}$ of determinant~$1$ and
let $\sigma_1\in\tbG_1^{F_0}$ be defined by $\sigma_1(e_{1,r}) = e_{1,r+1}$,
$1\leq r \leq 3$. Then $D_1:= \langle U_1,\sigma_1\rangle$ is a Sylow
$3$-subgroup of $\tbG_1^{F_0}$. Since by hypothesis the Sylow $3$-subgroups of
$C_{\bG^F}(s)$ have order $27$, $D:= \Delta(D_1)$ is a Sylow $3$-subgroup of
$C_{\bG^F}(s)$ with $\Delta(U_1)\cong U_1$ elementary abelian of
order $9$. Note that $\Delta(\sigma_1)(e_{i,r}) = e_{i,r+1}$ for
$1\leq i\leq d$ and $ 1\leq r \leq 3$.

Let $\zeta\in \overline{\FF}_q$ be a primitive $3$rd-root of unity.
Let $u_1\in U_1$ be such that $u_1(e_{1,r}) =\zeta^re_{1,r}$, $1\leq r\leq 3$.
For $ 1\leq r \leq 3$, let $W_r$ be the span of $\{e_{1,r},\ldots,e_{d,r}\}$.
Then $W_r$ is the $\zeta^r$-eigenspace of $\Delta(u_1)$, whence
$$C_{\tbG} (D) \leq C_{\tbG} (\Delta (U_1)) =C_ {\tbG }(\Delta(u_1))
   = \prod_{1\leq r\leq 3} \GL(W_r).$$
Since $\Delta(\sigma_1)(W_r) = W_{r+1}$, and $\Delta(\sigma_1)$ acts on
$C_{\tbG}(\Delta (U_1))$, it follows that $C_{\tbG }(D) = \Delta'(\GL(W_1))$,
where $\Delta':\GL(W_1) \to\prod_{1\leq r\leq 3} \GL(W_r)$,
$x\mapsto x \,^\sigma x \,^{\sigma^2} x$, is the twisted diagonal.

We claim that $\Delta(\tbG_1^{F_0})$ centralises $C_{\tbG }(D)$. Indeed, note
that $g\in\Delta(\tbG_1^{F_0})$ if and only if $a(g)_{i, r, j,s } = 0$ if
$i\ne j$ and $a(g)_{i,r,i,s} = a (F_0^{i-1}(g))_{1,r,1,s} =a(g)_{1,r,1,s}$
for all $i$ and all $r,s$. Also, $h\in C_{\tbG}(D)$ if and only if
$a(h)_{i,r,j,s} = 0$ if $r\ne s$ and $a(h)_{i,r,j,r} =a(h)_{i,1,j,1}$ for
all $i,j$ and all $r$. The claim follows from an easy matrix multiplication.

Let $H= [\tbG_1^{F_0},\tbG_1^{F_0}]$ and note that $D_1\leq H$. By
Lemma~\ref{lem:SL-non-Cabanes} applied to $H$ any two subgroups of $D_1$
of order $9$ are conjugate by an element of $N_H(D_1)$.
The lemma follows from the claim above.
\end{proof}

\subsection{Parametrising $\ell$-blocks}

We can now prove our main Theorem~A, which we restate. Recall the  definition
of $e$-Jordan (quasi-central) cuspidal pairs from the previous section.

\begin{thm}   \label{thm:ecusp-blocks}
 Let $\bH$ be a simple algebraic group of simply connected type with a
 Frobenius endomorphism $F:\bH\rightarrow\bH$ endowing $\bH$ with an
 $\FF_q$-rational structure. Let $\bG$ be an $F$-stable Levi subgroup of $\bH$.
 Let $\ell$ be a prime not dividing $q$ and set $e=e_\ell(q)$.
 \begin{enumerate}[\rm(a)]
  \item For any $e$-Jordan-cuspidal pair $(\bL,\la)$ of $\bG$ such that
   $\la\in\cE(\bL^F,\ell')$, there exists a unique $\ell$-block
   $b_{\bG^F}(\bL,\la)$ of $\bG^F$ such that all irreducible constituents
   of $\RLG(\la)$ lie in $b_{\bG^F}(\bL,\la)$.
  \item The map $\Xi:(\bL,\la)\mapsto b_{\bG^F}(\bL,\la)$ is a
   surjection from the set of $\bG^F$-conjugacy classes of $e$-Jordan-cuspidal
   pairs $(\bL,\la)$ of $\bG$ with $\la \in \cE(\bL^F,\ell')$ to the set
   of $\ell$-blocks of~$\bG^F$.
  \item The map $\Xi$ restricts to a surjection from the set of
   $\bG^F$-conjugacy classes of $e$-Jordan quasi-central cuspidal pairs
   $(\bL,\la)$ of $\bG$ with $\la \in \cE(\bL^F,\ell')$ to the set
   of $\ell$-blocks of~$\bG^F$.
  \item For $\ell\ge3$ the map $\Xi$ restricts to a bijection between the set
   of $\bG^F$-conjugacy classes of $e$-Jordan quasi-central cuspidal pairs
   $(\bL,\la)$ of $\bG$ with $\la \in \cE(\bL^F,\ell')$ and the set of
   $\ell$-blocks of $\bG^F$.
  \item The map $\Xi$ itself is bijective if $\ell\geq 3$ is good for $\bG$,
   and $\ell\ne 3$ if $\bG^F$ has a factor $\tw3D_4(q)$.
 \end{enumerate}
\end{thm}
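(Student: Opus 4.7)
Parts (a), (b), (c), and (e) can be derived from the machinery developed in the preceding sections. For (a), Theorem~\ref{thm:all e-splits} applied to $\bM:=\bL$ and to the $\ell$-block $c$ of $\bL^F$ containing $\la$ produces the required block $b_{\bG^F}(\bL,\la)$ of $\bG^F$; uniqueness is automatic since all constituents of $\RLG(\la)$ then lie in one block. For (b), every $\ell$-block $b$ of $\bG^F$ meets $\cE(\bG^F,\ell')$, so Theorem~\ref{thm:all covered} applied to any $\chi\in\Irr(b)\cap\cE(\bG^F,\ell')$ yields a pair $(\bL,\la)\in\cL(b)$ with $\Xi(\bL,\la)=b$.

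Part~(c) refines this. Starting from the pair produced in~(b), say with $\la\in\cE(\bL^F,s)$, the Bonnaf\'e--Rouquier reduction (Propositions~\ref{prop:biject-Jcusp} and~\ref{prop:biject-Jcusp2}) transports the question to the Levi subgroup $\bG_1\le\bG$ dual to one containing $C_{\bG^*}(s)$. Since $\Psi_{\bG_1}^\bG$ preserves the $C_{\bL^*}^\circ(s)^F$-orbit of unipotent characters arising under Jordan decomposition, and hence preserves quasi-central defect, the problem reduces to producing a quasi-central cuspidal pair in each block of the quasi-isolated situation; this is available from \cite{CE99,En00} for good primes and from \cite{KM} for quasi-isolated series of exceptional groups at bad primes. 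Part~(e) is then Proposition~\ref{prop:cabanes-ok}: under the hypothesis that $\ell\geq 3$ is good for $\bG$ and $\ell\ne 3$ whenever $\tw3D_4(q)$ is a factor of $\bG^F$, defect groups of $\ell$-blocks of $\bG^F$ are Cabanes, and Lemma~\ref{lem:cabanesgroup-use} forces $\bG^F$-conjugacy of any two $e$-Jordan-cuspidal pairs mapping to the same block.

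The main obstacle is part~(d), where bad primes are allowed but the domain is restricted to $e$-Jordan quasi-central cuspidal pairs. By Lemma~\ref{lem:SL-non-Cabanes}, at $\ell=3$ the only obstruction to the Cabanes property within linear and unitary factors is an extra-special defect factor of order~$27$, and Lemma~\ref{lem:cabanes-direct} localises the injectivity question to such a factor. Here the quasi-central cuspidal assumption is essential: although the maximal normal abelian subgroups of order~$9$ in an extra-special group of order~$27$ need not be fused inside $N_{\bG^F}(D)$, Lemma~\ref{lem:doublecentraliser} supplies a conjugating element lying in $N_{\bG^F}(D)\cap C_{\bG^F}(C_{\bG^F}(D))$, and this extra centraliser-of-centraliser condition ensures that the Brauer pair data inherited from the quasi-central cuspidal character remain undisturbed. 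Feeding this substitute conjugation into the Brauer pair argument of Lemma~\ref{lem:cabanesgroup-use} then yields injectivity on the quasi-central domain for all $\ell\ge3$. The most delicate step I anticipate is tracking Brauer pair inclusions through this non-standard conjugation, i.e.\ verifying that the replaced pair $(\bL',\la')$ inherits exactly the block-theoretic labelling of $(\bL,\la)$ rather than a mere $\ell$-local approximation of it.
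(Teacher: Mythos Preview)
Your outline for (a) and (b) matches the paper's argument. But there are two genuine gaps in your treatment of (d) and (e), and you have essentially swapped the hard work between them.

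\textbf{Gap in (e).} Your claim that ``defect groups of $\ell$-blocks of $\bG^F$ are Cabanes'' under the hypotheses of~(e) is false. Take $\bG^F=\SL_3(q)$ with $\ell=3$ and $3\|(q-1)$: here $\ell$ is good for $\bG$, yet the principal $3$-block has extra-special defect group of order~$27$, which is \emph{not} Cabanes. So Proposition~\ref{prop:cabanes-ok} does not settle~(e). The paper uses \cite[Thm.~4.1, Rem.~5.2]{CE99} to reduce to $\ell=3$ with $\bG$ of type $A$ and $3\notin\Gamma(\bG,F)$, and then invokes Lemmas~\ref{lem:SL-non-Cabanes} and~\ref{lem:doublecentraliser} precisely to handle this non-Cabanes situation. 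The Brauer-pair conjugation argument you describe (finding $g\in N_{\bG^F}(D)\cap C_{\bG^F}(C_{\bG^F}(D))$ and tracking Brauer pairs) is correct in spirit, but it belongs to~(e), not to~(d), and no quasi-central hypothesis is needed or used there: one shows directly that \emph{any} two $e$-Jordan-cuspidal pairs for $b$ are $\bG^F$-conjugate.

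\textbf{Gap in (d).} Once~(e) is proved, (d) follows immediately for $\ell\ge3$ good. The genuinely new content of~(d) is the case where $\ell\ge3$ is \emph{bad} for $\bG$ (or $\ell=3$ with a $\tw3D_4(q)$ factor), hence $\bG$ has an exceptional component. Your proposal does not address this at all. The paper's argument here is of a completely different flavour: reduce via Bonnaf\'e--Rouquier to quasi-isolated blocks, pass to $[\bG,\bG]$ via Lemmas~\ref{lem:commutingR}, \ref{lem:commutingRreverse} and~\ref{lem:ext-qcentral}, and then invoke the explicit classifications in \cite{En00} and \cite{KM}. Injectivity for $\bG\ne\bH$ requires a further case analysis (the possibilities are $[\bG,\bG]$ of type $E_6$, $E_6{+}A_1$, or $E_7$ inside $E_7$ or $E_8$), where one checks from the tables in \cite{KM} that the relevant maximal Brauer pairs still have a unique maximal abelian normal subgroup, and combines this with a counting argument over $\bG^F/[\bG,\bG]^F$. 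None of the type-$A$ machinery (Lemmas~\ref{lem:SL-non-Cabanes}, \ref{lem:doublecentraliser}) is relevant here.

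Finally, your sketch of~(c) is too optimistic for $\ell=2$: the paper does not reduce to known quasi-isolated results but instead constructs an explicit $e$-Jordan quasi-central cuspidal pair via Lemma~\ref{lem:maxtor} (centralisers of Sylow $e$-tori are tori for $e\in\{1,2\}$) in the classical case.
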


\begin{rem}   \label{rem:part(e)}
Note that (e) is best possible. See \cite{En00}, \cite{KM} for
counter-examples to the conclusion for bad primes, and \cite[p.~348]{En00} for
a counter-example in the case $\ell= 3$ and $\bG^F=\tw3D_4(q)$.
Counter-examples in the case
$\ell=2$ and $\bG$ of type $A_n$ occur in the following situation. Let
$\bG^F =\SL_n(q)$ with $4|(q+1)$. Then $e=2$ and the unipotent
$2$-(Jordan-) cuspidal pairs of $\bG^F$ correspond to $2$-cores of partitions
of $n-1$ (see \cite[\S3A]{BMM}). On the other hand, by \cite[Thm.~13]{CE93},
$\bG^F$ has a unique unipotent $2$-block. \par
Also, part (d) is best possible as the next example shows.
\end{rem}

\begin{exmp}   \label{exmp:part(d)}
Consider $\bG=\SL_n$ with $n>1$ odd, $\tilde \bG = \GL_n$, and let
$\bG^F=\SL_n(q)$ be such that $q\equiv1\pmod n$ and $4|(q+1)$. Then for
$\ell=2$ we have $e=e_2(q)=2$, and $\FF_q$ contains a primitive $n$-th root
of unity, say $\zeta$. Let
$\tilde s=\diag(1,\zeta,\ldots,\zeta^{n-1})\in\tbG^{*F}$ and let $s$ be its
image in $\bG^*=\PGL_n$. Then $C_{\bG^*}^\circ(s)$ is the maximal $1$-torus
consisting of the image of the diagonal torus of $\tilde \bG^*$.
Thus, $(C_{\bG^*}^\circ(s))_2= 1 = \Zc(\bG^*)_2$.

As $|C_{\bG^*}(s)^F:C_{\bG^*}^\circ(s)^F|=n$ we have $|\cE(\bG^F,s)| = n$,
and all of these characters are $2$-Jordan quasi-central cuspidal.
We claim that all elements of $\cE(\bG^F,s)$ lie in the same $2$-block of
$\bG^F$, so do not satisfy the conclusion of Theorem~\ref{thm:ecusp-blocks}(d).

Let $\tilde \bT$ be a maximal torus of $\tbG$ in duality with
$C_{\tbG^*}(s)$ and let $\tilde\theta\in\Irr(\tilde\bT^F)$ in duality
with $\tilde s$. Let $\bT=\tilde\bT\cap\bG$, and let
$\theta=\tilde\theta|_{\bT^F}$. Since $\tilde s$ is regular,
$\tilde\la:= R_{\tilde \bT}^{\tilde \bG}(\theta)\in \Irr(\tbG^F)$, and
$\cE(\tbG^F,\tilde s)=\{\tilde\la\}$. Further, $\tilde\la$ covers every
element of $\cE(\bG^F,s)$. By \cite[ Prop.~10.10(b*)]{B05},
$$\RTG(\theta) =\Res_{\bG^F}^{\tbG^F}R_{\tilde \bT}^{\tbG}(\tilde\theta)
               =\Res_{\bG^F}^{\tbG^F}(\tilde\la).$$
Thus, every element of $\cE(\bG^F,s)$ is a constituent of $\RTG(\theta)$. On
the other hand, since $\tilde \bT$ is the torus of diagonal matrices, we have
$\bT = C_\bG(\bT_2^F)$ by explicit computation. Hence by
\cite[Props.~2.12, 2.13(1), 2.16(1)]{KM}, all constituents of $\RTG(\theta)$
lie in a single $2$-block of~$\bG^F$.
\end{exmp}

\begin{proof}
Parts~(a) and~(b) are immediate from Theorem~\ref{thm:all e-splits} and
the proof of Theorem~\ref{thm:all covered}.
We next consider Part~(e), where it remains to show injectivity under the
given assumptions. By \cite[Thm.~4.1 and Rem.~5.2]{CE99} only $\ell=3$ and
$\bG$ of (possibly twisted) type $A_n$ remains to be considered. Note that the
claim holds if $3\in\Gamma(\bG,F)$ by \cite[Sec.~5.2]{CE99}. Thus we may
assume that the ambient simple algebraic group $\bH$ of simply connected type
is either $\SL_m$ or $E_6$, and $3\not\in\Gamma(\bG,F)$.
By Proposition~\ref{prop:cabanes-ok} the claim holds for all blocks
whose defect groups are Cabanes.

Let first $\bH=\SL_m$ and $\bG\le\bH$ be an $F$-stable Levi subgroup. As
$3\not\in\Gamma(\bG,F)$ we have $3|(q-1)$ when $F$ is untwisted. We postpone
the twisted case for a moment. Embed $\bH\hookrightarrow\tilde\bH=\GL_m$. Then
$\tbG=\bG Z(\bH)$ is an $F$-stable Levi subgroup of $\tilde\bH$, so has
connected center. Moreover, as $\tilde\bH$ is self-dual, so is its Levi subgroup
$\tbG$. In particular, $3\in\Gamma(\tbG,F)$. Now let $b$ be a 3-block of
$\bG^F$ in $\cE_3(\bG^F,s)$, with $s\in{\bG^*}^F$ a semisimple $3'$-element.
Let $\tilde b$ be a block of $\tbG$ covering $b$, contained in
$\cE_3(\tbG^F,\tilde s)$, where $\tilde s$ is a preimage of $s$ under the
induced map $\tbG^*\rightarrow\bG^*$. Since $3|(q-1)$, $C_{\tbG}(\tilde s)^F$
has a single unipotent 3-block, and so by \cite[Prop.~5.1]{CE99} a Sylow
3-subgroup $\tilde D$ of $C_{\tbG}(\tilde s)^F$ is a defect group of
$\tilde b$. Thus, $D:=\tilde D\cap\bG=\tilde D\cap\bH$ is a defect group
of $b$.  \par
Now $C_{\tbG}(\tilde s)$ is an $F$-stable Levi subgroup of $\tbG$, so also
an $F$-stable Levi subgroup of $\tilde\bH=\GL_m$. As such, it is a direct
product of factors $\GL_{m_i}$ with $\sum_i m_i=m$. Assume that there is more
than one $F$-orbit on the set of factors. Then by Lemma~\ref{lem:cabanes-direct}
the Sylow 3-subgroup $\tilde D$ of $C_{\tbG}(\tilde s)^F$ has the property
that $D=\tilde D\cap\bH$ is 'Cabanes' and we are done. Hence, we may assume
that $F$ has just one orbit on the set of factors of $C_{\tbG}(\tilde s)$.
But this is only possible if $F$ has only one orbit on the set of factors
of $\tbG$. This implies that $\tbG^F\cong\GL_n(q^{m/n})$ and
$\bG^F\cong\SL_n(q^{m/n})$ for some $n|m$.
\par
Exactly the same arguments apply when $F$ is twisted, except that now $3|(q+1)$.
So replacing $q$ by $q^{m/n}$ we may now suppose that $\bG=\SL_n$ with
$3\not\in\Gamma(\bG,F)$. Assume that the defect groups of $b$ are not Cabanes.
Let $(\bL,\la)$ be an $e$-Jordan-cuspidal pair for $b$ with $\la\in\cE(\bL^F,s)$
and let $\tbL = \Zc(\tbG)\bL$. There exists an irreducible character
$\tilde\la$ of $\tbL^F$ covering $\la$, an irreducible constituent
$\tilde\chi$ of $R_{\tbL}^{\tbG}(\tilde\la)$ and an irreducible constituent,
say $\chi$ of $\RLG(\la)$ such that $\tilde\chi$ covers $\chi$.
By Lemma~\ref{lem:derived-Jcusp}, $(\tbL, \tilde\la)$ is $e$-Jordan-cuspidal.
Let $\tilde b$ be the block of $\tbG^F$ associated to $(\tbL,\tilde\la)$,
contained in $\cE_3(\tbG^F,\tilde s)$. So, $\tilde b$ covers $b$.

As seen above $C_{\tbG}(\tilde s)^F$ has a single unipotent 3-block and a Sylow
3-subgroup $\tilde D$ of $C_{\tbG}(\tilde s)^F$ is a defect group of
$\tilde b$ and $D:=\tilde D\cap\bG$ is a defect group of~$b$. Moreover $F$ has
a single orbit on the set of factors of $C_{\tbG}(\tilde s)$. By
Lemma~\ref{lem:SL-non-Cabanes},
$C_{\tbG}(\tilde s)^F = \GL_3(q^{\frac{n}{3}})$  or $\GU_3(q^{\frac{n}{3}})$,
$3$ does not divide $\frac{n}{3}$ and $D$ is extra-special of
order~$27$ and exponent~$3$. Also, $\tbL$ is an $e$-split Levi subgroup
isomorphic to a direct product of $3$ copies of $\GL_{\frac{n}{3}}$.

Let $U= Z(\bL)^F_3$ and let $c$ be the $3$-block of $\bL^F$ containing $\la$.
From the structure of $\tbL$ given above, $|U|=9$ and $\bL=C_\bG(U)$.
Thus, by \cite[Thm.~2.5]{CE99} $(U,c)$ is a $b$-Brauer pair. Let $(D,f)$ be
a maximal $b$-Brauer pair such that $(U,c)\leq(D,f)$.

Let $(\bL',\la')$ be another $e$-Jordan-cuspidal pair for $b$ with
$\la'\in\cE({\bL'}^F, s)$. Let $U' = Z (\bL')^F_3$ and let $c'$ be the
$3$-block of ${\bL'}^F$ containing $\la'$, so $|U'|=9$ and $(U',c')$ is also
a $b$-Brauer pair. Since all maximal $b$-Brauer pairs are $\bG^F$-conjugate,
there exists $h\in\bG^F$ such that $\,^h(U',c')\leq(D,f)$. Thus, $U$ and
$\,^hU'$ are subgroups of order~9 of $D$. By Lemma~\ref{lem:doublecentraliser},
there exists $g\in N_{\bG^F}(D)\cap C_{\bG^F}(C_{\bG^F}(D))$ such that
$\,^{gh}U' = U$. Since $g$ centralises $C_{\bG^F}(D)$, $\,^gf = f$ and since
$g$ normalises $D$, $\,^gD=D$. Hence
$$(U,\,^{gh}c')=\,^{gh}(U',c')\leq\,^g(D,f)=(D,f).$$
By the uniqueness of inclusion of Brauer pairs we get that
$ \,^{gh}(U',c') = (U,c)$. Thus $\,^{gh}\bL'=\bL$ and $\,^{gh}c'=c$.
Since $U$ is abelian of maximal order in $D$, $(U,c)$ is a self-centralising
Brauer pair. In particular, there is a unique irreducible character in $c$
with $U$ in its kernel. Since $\la \in \cE(\bL^F, \ell')$, $U$ is contained
in the kernel of $\la$. Hence $\,^{gh}\la'= \la$ and injectivity is proved
for type $A$.

Finally suppose that $\bH$ is of type $E_6$. By our preliminary reductions we
may assume that $\bG$ has only factors of type $A$ and $3\notin\Gamma(\bG,F)$.
Thus $\bG$ must have at least one factor of type $A_2$ or $A_5$. The remaining
possibilities hence are: $\bG$ is of type $A_5$, $2A_2+A_1$, or $2A_2$.
Note that for $\bG$ of type $2A_2+A_1$, the $A_1$-factor of the derived
subgroup $[\bG,\bG]$ splits off, and that $2A_2$ is a Levi subgroup of $A_5$.
So it suffices to show the claim for Levi subgroups of this particular
Levi subgroup $\bG$ of type $A_5$. Since $\bH$ is simply connected,
$[\bG,\bG]\cong\SL_6$ and thus virtually the same arguments as for the case of
$\bG=\SL_n$ apply. This completes the proof of~(e).

Part~(d) follows whenever $\ell\ge3$ is good for $\bG$, and $\ell\ne3$ if
$\bG^F$ has a factor $\tw3D_4(q)$, since then by~(e) there is a unique
$e$-Jordan-cuspidal pair for any $\ell$-block, and its (unipotent) Jordan
correspondent has quasi-central $\ell$-defect by \cite[Prop.~4.3]{CE94} and
Remark~\ref{rem:KM}. So now assume that either $\ell\ge3$ is bad for $\bG$,
or that $\ell=3$ and $\bG^F$ has a factor $\tw3D_4(q)$.

Note that it suffices to prove the statement for quasi-isolated blocks, since
then it follows tautologically for all others using the Bonnaf\'e--Rouquier
Morita equivalences, Proposition~\ref{prop:biject-Jcusp} and the remarks after
Definition~\ref{defn:ejqcc}. Here note that by Lemma~\ref{lem:biject-weyl} the
bijections of Proposition~\ref{prop:biject-Jcusp} extend to conjugacy classes
of pairs. We first prove surjectivity. For this, by Lemma~\ref{lem:commutingR},
Lemma~\ref{lem:ext-qcentral} and by parts~(a) and (b), we may assume that
$\bG =[\bG,\bG]$. Further, since $[\bG,\bG]$ is simply connected, hence a
direct product of its
components, we may assume that $\bG$ is simple. Then surjectivity for unipotent
blocks follows from \cite[Thms.~A, A.bis]{En00}, while for all other
quasi-isolated blocks it is shown in \cite[Thm.~1.2]{KM} (these also include
the case that $\bG^F=\tw3D_4(q)$).

Now we prove injectivity. If $\bG=\bH$, then the claim for unipotent blocks
follows from \cite[Thms.~A, A.bis]{En00}, while for all other quasi-isolated
blocks it is shown in \cite[Thm.~1.2]{KM} (these also include the case that
$\bG^F=\tw3D_4(q)$). Note that in Table~4 of \cite{KM}, each of the
lines 6, 7, 10, 11, 14 and~20 give rise to two $e$-cuspidal pairs and
so to two $e$-Harish-Chandra series, but each $e$-Jordan cuspidal pair
$(\bL,\la)$ which corresponds to these lines has the Cabanes property of
Lemma~\ref{lem:cabanesgroup-use}, so they give rise to different blocks.

So, we may assume that $\bG \ne \bH$, and thus $\ell=3$. Suppose first that
$\bG^F$ has a factor $\tw3D_4(q)$. Then $\bH$ is of type $E_6$,
$E_7$ or $E_8$, there is one component of $[\bG,\bG]$ of type $D_4$ and all
other components are of type $A$. Denote  by $\bG_2$
the component of type $D_4 $, and by $\bG_1$ the product of the remaining
components with $Z^\circ(\bG)$. We note that $Z(\bG_1)/Z^\circ(\bG_1)$ is a
$3'$-group. Indeed, if $\bH $ is of type $E_7$ or $E_8$, then
$Z(\bG)/Z^\circ(\bG)$ is of order prime to $3$, hence the same is true of
$Z(\bG_1)/Z^\circ(\bG_1)$ and if $\bH$ is of type $E_6$, then
$\bG_1=Z^\circ(\bG)$.

Now, $\bG^F = \bG_1^F\times\bG_2^F$. So, the map
$((\bL_1,\la_1),(\bL_2,\la_2)) \to (\bL_1\bL_2, \la_1\la_2) $ is a bijection
between pairs of $e$-Jordan cuspidal pairs for $\bG_1^F $ and
$\bG_2^F$ and $e$-Jordan cuspidal pairs for $ \bG^F$. The bijection preserves
conjugacy and quasi-centrality. All components of $\bG_1 $ are of type $A$ and
as noted above $3$ does not divide the order of $Z(\bG_1)/Z^\circ(\bG_1) $,
hence by \cite[Sec.~5.2]{CE99} we may assume that $\bG =\bG_2 $,
in which case we are done by \cite{En00} and \cite{KM}.

Thus, $\bG^F$ has no factor $\tw3D_4(q)$. Set
$\bG_0:= [\bG, \bG]$. Since $3$ is bad for $\bG$, and $\bG$ is proper in
$\bH$, we are in one of the following cases: $\bH$ is of type $E_7$ and
$\bG_0$ is simple of type $E_6$, or $\bG$ is of type $E_8$ and $\bG_0 $ is of
type $E_6$, $E_6+A_1$ or $E_7$. In all cases note that $Z(\bG)$ is connected,

Let $s\in \bG^{*F}$ be a quasi-isolated semisimple $3'$-element. Let $\bar s$
be the image of $s$ under the surjection $\bG^*\rightarrow\bG_0^*$. Since
$Z(\bG)$ is connected, $s$ is
isolated in $\bG^*$ and consequently $\bar s$ is isolated in $\bG_0^* $.
In particular, if $\bG_0$ has a component of type $A_1 $, then the projection
of $\bar s $ into that factor is the identity. Since $s$ has
order prime to $3$, this means that if $\bG_0$ has a component of type $E_6$,
then $C_{\bG_0^*}(\bar s)$ is connected. We will use this fact later. Also,
we note here that $\bar s\ne 1$ as otherwise the result would follow from
\cite{En00} and the standard correspondence between unipotent
blocks and blocks lying in central Lusztig series. Finally, we note that by
\cite[Thm.~1.2]{KM} the conclusion of Parts~(a) and~(d) of the theorem
holds for $\bG_0^F$ as all components of $\bG_0$ are of different type
(so $e$ is the same for the factors of $\bG_0^F$ as for $\bG^F$).

Let $b$ be a $3$-block of $\bG^F$ in the series $s$ and $(\bL,\la)$ be an
$e$-Jordan quasi-central cuspidal pair for $b$ such that $s\in\bL^{*F}$ and
$\la\in\cE(\bL^F,s)$. Let $\bL_0= \bL \cap\bG_0$ and let $\la_0$ be an
irreducible constituent of the restriction of $\la$ to $\bL_0^F$. By
Lemma~\ref{lem:commutingRreverse} there exists a  block $b_0$ of $\bG_0^F$
covered by $b$, and such that all irreducible constituents of
$R_{\bL_0}^{\bG_0}(\la_0)$ belong to $b$. By Lemma~\ref{lem:derived-Jcusp}
and the remarks following Definition~\ref{defn:ejqcc}, $(\bL_0,\la_0)$ is an
$e$-Jordan quasi-central cuspidal pair of $\bG_0^F$ for $b_0$.

First suppose that $C_{\bG_0}(\bar s)$ is connected. Then all elements of
$\cE(\bG_0^F,\bar s)$ are $\bG^F$-stable and in particular, $b_0$ is
$\bG^F$-stable. Now let $(\bL',\la')$ be another $e$-Jordan quasi-central
cuspidal pair for $b$. Let $\bL_0'=\bL'\cap \bG_0 $ and $\la_0'$ be an
irreducible constituent of the restriction of $\la'$ to $\bL_0^{'F}$. Then,
as above $(\bL_0', \la_0')$ is an $e$-Jordan quasi-central cuspidal pair for
$b_0$. But there is a unique $e$-Jordan quasi-central cuspidal pair for $b_0$
up to $\bG_0^F$-conjugacy. So, up to replacing by a suitable $\bG_0^F$-conjugate
we may assume that $(\bL_0,\la_0) =(\bL_0',\la_0')$, hence $\bL=\bL'$, and
$\la$ and $\la'$ cover the same character $\la_0=\la_0'$ of
$\bL_0^F= \bL_0{'^F}$.

If $\mu\in\cE(\bG_0^F,\bar s)$, then there are $|\bG^F/\bG_0^F|_{3'}$
different $3'$-Lusztig series of $\bG^F$ containing an irreducible character
covering~$\mu$. Since characters in different $3'$-Lusztig series lie in
different $3$-blocks, there are at least $|\bG^F/\bG_0^F|_{3'}$ different
blocks of $\bG^F$ covering $b_0 $. Moreover, if $b'$ is a block of $\bG^F$
covering $b_0$, then there exists a linear character, say $\theta$ of
$\bG^F/\bG_0^F\cong\bL^F/\bL_0^F$ of $3'$-degree such that
$(\bL,\theta\otimes\la)$ is an $e$-Jordan quasi-central cuspidal pair for
$b'$ and
$\la_0$ appears in the restriction of $\theta \otimes \la$ to $\bL_0^F$. Since
there are at most $| \bL^F/ \bL_0^F|_{3'}= |\bG^F/\bG_0^F|_{3'}$ irreducible
characters of $\bL^F$ in $3'$-series covering $\la_0$, it follows that
$\la =\la'$.

Thus, we may assume that $C_{\bG_0}(\bar s)$ is not connected. Hence, by the
remarks above $\bG_0$ is simple of type $E_7$. Further $\bar s$ corresponds to
one of the lines~5, 6, 7, 12, 13, or~14 of Table~4 of \cite{KM} (note that
$\bar s$ is isolated and that $e$-Jordan (quasi)-central cuspidality in this
case is the same as $e$-(quasi)-central cuspidality).

By \cite[Lemma~5.2]{KM}, $\bL_0=C_{\bG_0} (Z(\bL_0^F)_3)$. In other words,
$(\bL_0,\la_0)$ is a good pair for $b_0$ in the sense of \cite[Def.~7.10]{KM}.
In particular, there is a maximal $b_0$-Brauer pair $(P_0,c_0)$ such that
$(Z(\bL_0^F)_3,b_{\bL_0^F}(\la_0))\unlhd(P_0,c_0)$. Here for a finite group
$X$ and an irreducible character $\eta$ of $X$, we denote by $b_X(\eta)$ the
$\ell$-block of $X$ containing $\eta$. By inspection of  the relevant lines
of Table~4 of \cite{KM} (and the proof of \cite[Thm.~1.2]{KM}), one
sees that the maximal Brauer pair $(P_0,c_0)$ can be chosen so that
$Z(\bL_0^F)_3$ is the unique maximal abelian normal subgroup of $P_0$.

By \cite[Thm.~7.11]{KM} there exists a maximal $b$-Brauer pair $(P,c)$ and
$\nu\in\cE(\bL^F,\ell')$ such that $\nu$ covers $\la_0$, $P_0\leq P$
and we have an inclusion of $b$-Brauer pairs
$(Z(\bL^F)_3,b_{\bL^F}(\nu))\unlhd (P,c)$. Since $\la$ also covers
$\la_0$, $\la=\tau \otimes \nu $ for some linear character $\tau$ of
$\bL^F/\bL_0^F \cong \bG^F/\bG_0^F $. Since tensoring with linear characters
preserves block distribution and commutes with Brauer pair inclusion, replacing
$c$ with the block of $C_{\bG^F} (P_0) $ whose irreducible characters are
of the form $\tau \otimes\varphi$, $\varphi\in\Irr(c)$, we get that there
exists a maximal $b$-Brauer pair $(P,c)$ such that
$P_0\leq P$ and $(Z(\bL^F)_3,b_{\bL^F}(\la))\unlhd (P,c)$.

Being normal in $\bG^F$, $Z(\bG^F)_3$ is contained in the defect groups of
every block of $\bG^F$, and in particular $Z(\bG^F)_3\leq P$. On the other
hand, since $\bG_0$ has centre of order $2$, $P_0Z(\bG^F)_3$ is a defect group
of $b$ whence $P$ is a direct product of $P_0$ and $Z(\bG^F)_3$. Now,
$Z(\bL_0^F)_3$ is the unique maximal abelian normal subgroup of $P_0$. Hence,
$Z(\bL^F)_3= Z(\bG^F)_3\times Z(\bL_0^F)_3$ is the unique maximal normal
abelian subgroup of $P$ (see Lemma~\ref{lem:cabanes-direct}). Finally note
that by Lemma~\ref{lem:ext-qcentral}, $\la$ is also of quasi-central
$\ell$-defect. By Lemma~\ref{lem:cabanesgroup-use} it follows that $(\bL,\la)$
is the unique $e$-Jordan quasi-central cuspidal pair of $\bG^F$ for $b$.

Finally, we show~(c). In view of the part~(d) just proved above, it remains
to consider the prime $\ell=2$ only. Suppose first that all components
of $\bG$ are of classical type. Let $s\in\bG^{*F}$ be semisimple of odd order
and let $b$ be a $2$-block of $\bG^F$ in series $s$. By Lemma~\ref{lem:maxtor}
below there is an $e$-torus, say $\bS$ of $C_{\bG^*}^\circ(s)$ such that
$\bT^*:= C_{C_{\bG^*}^\circ(s)}(\bS)$ is a maximal torus of
$C_{\bG^*}^\circ(s)$. Let $\bL^* = C_{\bG^*}(\bS)$ and let $\bL$ be a Levi
subgroup of $\bG$ in duality with $\bL^*$. Then $\bL$ is an $e$-split subgroup
of $\bG$ and $\bT^* = C_{\bL^*} ^\circ(s)$. Let $\lambda \in\Irr(\bL^F,s)$
correspond via Jordan decomposition to the trivial character of $\bT^{*F}$.
Then $(\bL,\la)$ is an $e$-Jordan quasi-central cuspidal pair of $\bG$.

Let $\bG\hookrightarrow\tbG$ be a regular embedding. By part~(a),
Lemmas~\ref{lem:classical2dis} and~\ref{lem:commutingRreverse}, there exists
$g\in\tbG^F$ such that $b= b_{\bG^F}(\,^g\bL,\,^g\la)$.
Now since $(\bL,\la)$ is $e$-Jordan quasi-central cuspidal, so is
$(\,^g\bL,\,^g\la)$. In order to see this, first note that, up to
multiplication by a suitable element of $\bG^F$  and by an application of the
Lang--Steinberg theorem, we may assume that $g$ is in some $F$-stable maximal
torus of $\Zc (\tbG) \bL$. Thus $\,^g \bL=\bL$, and $\la$ and
$\,^g \la$ correspond to the same $C_{\bL^*}(s)^F$ orbit of unipotent
characters of $C_{\bL^*}^\circ(s)^{F}$.

Now suppose that $\bG$ has a component of exceptional type. Then we can argue
just as in the proof of surjectivity for bad $\ell$ in Part~(d).
\end{proof}

\begin{lem}   \label{lem:maxtor}
 Let $\bG$ be connected reductive with a Frobenius morphism
 $F:\bG\rightarrow\bG$. Let $e\in\{1,2\}$ and let $\bS$ be a Sylow $e$-torus
 of $\bG$. Then $C_\bG(\bS)$ is a torus.
\end{lem}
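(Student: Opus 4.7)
The plan is to show that $\bL := C_\bG(\bS)$ coincides with its own connected centre by arguing that its derived subgroup $\bL' := [\bL,\bL]$ is trivial. Since $\bS$ is an $F$-stable subtorus of $\bG$, the centraliser $\bL$ is an $F$-stable Levi subgroup and $\bS \subseteq Z(\bL)^\circ$; the maximality of $\bS$ among $\Ph{e}$-subtori of $\bG$ then forces $\bS = Z(\bL)^\circ_e$, so that $\bS$ is itself a Sylow $e$-torus of $\bL$.

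The key step is the observation that $\bL'$ cannot admit any non-trivial $\Ph{e}$-subtorus. Indeed, if $\bS'$ were such, the intersection $\bS \cap \bS' \subseteq \bS \cap Z(\bL')$ would be finite (as $\bS \subseteq Z(\bL)^\circ$ meets the semisimple group $\bL'$ only in its finite centre), and the commuting product $\bS \cdot \bS'$ would be an $F$-stable $\Ph{e}$-subtorus of $\bG$ of dimension $\dim\bS + \dim\bS' > \dim\bS$, contradicting the Sylow property of $\bS$. It therefore suffices to prove that for $e \in \{1,2\}$, every non-trivial connected semisimple group over $\FF_q$ carries a non-trivial $\Ph{e}$-subtorus.

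To this end I would fix an $F$-stable maximal torus $\bT_0$ of $\bL'$, so that $F = q\phi_0$ on $X(\bT_0)\otimes\mathbb{Q}$ for a finite-order $\phi_0$ coming from a (possibly trivial) diagram automorphism, and exploit the standard fact that every $F$-stable maximal torus of $\bL'$ arises as a twist $\bT_w$ on which $F$ acts as $q\cdot w\phi_0$. For $e = 1$, any $\phi_0$-orbit-sum of simple roots gives a non-zero $\phi_0$-fixed vector in $X(\bT_0)\otimes\mathbb{Q}$ and cuts out a non-trivial split subtorus of $\bT_0$, recovering the classical isotropy of semisimple groups over finite fields. For $e = 2$, I would pick a $\phi_0$-fixed root $\alpha$ of $\bL'$ and take $w := s_\alpha$: then $s_\alpha\phi_0(\alpha) = s_\alpha(\alpha) = -\alpha$, so that $-1$ is an eigenvalue of $s_\alpha\phi_0$ and $\bT_{s_\alpha}$ carries a non-trivial $\Ph{2}$-subtorus. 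The main obstacle is thus the existence of a $\phi_0$-fixed root of $\bL'$ in every non-trivial irreducible type; for $\phi_0 = 1$ any simple root serves, and for the remaining cases (swap on $A_n$, $D_n$, $E_6$, and triality on $D_4$) an explicit inspection of the Dynkin diagram folding supplies such an $\alpha$ --- a fixed simple root when one exists, and otherwise a symmetric sum of simple roots such as the highest root of $A_{2k}$ --- thereby closing the argument.
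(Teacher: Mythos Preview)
Your overall strategy matches the paper's: both reduce to showing that the derived group $\bL' = [\bL,\bL]$ of $\bL = C_\bG(\bS)$ is trivial by arguing that any non-trivial semisimple $F$-group carries a non-trivial $\Phi_e$-subtorus, which together with $\bS$ would contradict the Sylow maximality of $\bS$.

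The difference lies in the $e=2$ step. The paper argues via Ennola duality: replacing the root-datum automorphism $\phi_0$ by its negative yields a new complete root datum $\bC'$ which again has positive $\FF_q$-split rank (by the same orbit-sum reasoning you use for $e=1$), and $\Phi_1$-tori of $\bC'$ correspond exactly to $\Phi_2$-tori of the original $\bL'$. This is a one-line conceptual reduction of $e=2$ to $e=1$ and works uniformly with no case analysis.

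Your route via a $\phi_0$-fixed root is more explicit but has a gap: you only produce such a root when the root system of $\bL'$ is irreducible, whereas $\bL'$ need not be simple. If $\phi_0$ permutes several simple components of $\bL'$ without fixing any one of them --- for instance $\bL'$ of type $A_1 \times A_1$ with $\phi_0$ the swap --- then no root of $\bL'$ is $\phi_0$-fixed and your reflection construction does not apply. The repair is routine: for an $F$-orbit of length $k$ on the simple factors, either $k$ is even and the permutation already contributes $-1$ as an eigenvalue of $\phi_0$ (so $\bT_0$ itself has a $\Phi_2$-part), or $k$ is odd and one reduces to a single simple factor under $F^k$, where your irreducible case analysis applies and $\Phi_2(q)\mid\Phi_2(q^k)$. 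But this reduction should be stated; alternatively, the paper's Ennola argument avoids the issue entirely.
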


\begin{proof}
Let $\bC:=[C_\bG(\bS),C_\bG(\bS)]$ and assume that $\bC$ has semisimple rank
at least one. Let $\bT$ be a maximally split torus of $\bC$. Then the Sylow
1-torus of $\bT$, hence of $\bC$ is non-trivial. Similarly, the reductive group
$\bC'$ with complete root datum obtained from that of $\bC$ by replacing the
automorphism on the Weyl group by its negative, again has a non-trivial Sylow
$1$-torus. But then $\bC$ also has a non-trivial Sylow $2$-torus. Thus in any
case $\bC$ has a non-central $e$-torus, which is a contradiction to its
definition.
\end{proof}


\section{Jordan decomposition of blocks } \label{sec:Jordan}
Lusztig induction induces Morita equivalences between Bonnaf\'e--Rouquier
corresponding blocks. We show that this also behaves nicely with respect to
$e$-cuspidal pairs and their corresponding $e$-Harish-Chandra series.

\subsection{Jordan decomposition and $e$-cuspidal pairs}
Throughout this subsection, $\bG$ is a connected reductive algebraic group
with a Frobenius endomorphism $F:\bG\rightarrow\bG$ endowing $\bG$ with an
$\FF_q$-structure for some power $q$ of~$p$. Our results here are valid for
all groups $\bG^F$ satisfying the Mackey-formula for Lusztig induction. At
present this is know to hold unless $\bG$ has a component $\bH$ of type $E_6$,
$E_7$ or $E_8$ with $\bH^F\in\{\tw2E_6(2), E_7(2),E_8(2)\}$, see
Bonnaf\'e--Michel \cite{BM11}. The following is in complete analogy with
Proposition~\ref{prop:biject-Jcusp}:

\begin{prop}   \label{prop:biject-ecusp}
 Assume that $\bG^F$ has no factor $\tw2E_6(2)$, $E_7(2)$ or $E_8(2)$.
 Let $s\in {\bG^*}^F$, and $\bG_1\le\bG$ an $F$-stable Levi subgroup with
 $\bG_1^*$ containing
 $C_{\bG^*}(s)$. For $(\bL_1,\la_1)$ an $e$-cuspidal pair of $\bG_1$
 below $\cE(\bG_1^F,s)$ define $\bL:=C_\bG(\Zc(\bL_1)_e)$ and
 $\la:=\eps_\bL\eps_{\bL_1}R_{\bL_1}^\bL(\la_1)$. Then
 $(\bL_1,\la_1)\mapsto(\bL,\la)$ defines a bijection $\Psi_{\bG_1}^\bG$
 between the set of $e$-cuspidal pairs of $\bG_1$ below $\cE(\bG_1^F,s)$ and
 the set of $e$-cuspidal pairs of $\bG$ below $\cE(\bG^F,s)$.
\end{prop}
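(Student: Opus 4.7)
The plan is to follow the proof of Proposition~\ref{prop:biject-Jcusp} step by step, exploiting that the full Mackey formula (and not merely its specialisation to the case where one Levi is a maximal torus) is now available under our running hypothesis on $\bG^F$. All structural computations are identical; only the cuspidality condition changes, and the uniform criterion argument is upgraded to a direct Mackey computation. In particular, the construction of $\bL^*:=C_{\bG^*}(\Zc(\bL_1^*)_e)$ as an $e$-split Levi of $\bG^*$, the identification $\bL_1^*=\bL^*\cap\bG_1^*$ containing $C_{\bL^*}(s)$, the fact that $\bL_1$ is a Levi subgroup of $\bL$, the equality $\Zc(\bL_1)_e=\Zc(\bL)_e$, the parabolic-independence of $R_{\bL_1}^\bL$ via \cite[Rem.~13.28]{DM91}, and the irreducibility of $\la$ via the Bonnaf\'e--Rouquier bijection on $\cE(\bL_1^F,s)$, all go through verbatim.

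The new step is to verify that $\la$ is $e$-cuspidal. Let $\bM<\bL$ be a proper $e$-split Levi subgroup. By the Mackey formula, which holds since $\bG^F$ has no factor $\tw2E_6(2)$, $E_7(2)$ or $E_8(2)$,
$${}^*\!R_\bM^\bL R_{\bL_1}^\bL(\la_1)=\sum_{x}R_{\bM\cap{}^x\bL_1}^\bM\circ\ad(x)\circ{}^*\!R_{{}^{x^{-1}}\bM\cap\bL_1}^{\bL_1}(\la_1),$$
the sum running over suitable representatives in $\bL^F$. For each $x$, the Levi subgroup ${}^{x^{-1}}\bM\cap\bL_1$ of $\bL_1$ is itself $e$-split in $\bL_1$: since ${}^{x^{-1}}\bM\cap\bL_1$ is a Levi of ${}^{x^{-1}}\bM$ it contains $\Zc({}^{x^{-1}}\bM)_e$, and one then checks the identity $C_{\bL_1}(\Zc({}^{x^{-1}}\bM\cap\bL_1)_e)={}^{x^{-1}}\bM\cap\bL_1$. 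It is also proper: an equality ${}^{x^{-1}}\bM\cap\bL_1=\bL_1$ would force $\bL_1\le{}^{x^{-1}}\bM$, hence $Z({}^{x^{-1}}\bM)\le Z(\bL_1)$ and $\Zc({}^{x^{-1}}\bM)_e\le\Zc(\bL_1)_e=\Zc(\bL)_e$, which combined with the reverse inclusion coming from ${}^{x^{-1}}\bM\le\bL$ forces ${}^{x^{-1}}\bM=\bL$, contradicting $\bM\subsetneq\bL$. Thus every summand vanishes by $e$-cuspidality of $\la_1$, and $\la$ is $e$-cuspidal. This Mackey verification, together with the tracking of $e$-split structure under intersection, is the main obstacle; everything else is bookkeeping.

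Injectivity is immediate from $\Zc(\bL_1)_e=\Zc(\bL)_e$ and the bijectivity of $R_{\bL_1}^\bL$ on $\cE(\bL_1^F,s)$, as in Proposition~\ref{prop:biject-Jcusp}. For the inverse map ${}^*\Psi_{\bG_1}^\bG$, set $\bL_1:=C_{\bG_1}(\Zc(\bL)_e)$ and let $\la_1$ be the unique constituent of ${}^*\!R_{\bL_1}^\bL(\la)$ in $\cE(\bL_1^F,s)$. To verify $e$-cuspidality of $\la_1$ in $\bG_1$, given a proper $e$-split Levi $\bN_1<\bL_1$, the group $\bN:=C_\bG(\Zc(\bN_1)_e)$ is a proper $e$-split Levi of $\bL$, and transitivity of Lusztig restriction together with the Bonnaf\'e--Rouquier bijectivity on the series $s$ identifies ${}^*\!R_{\bN_1}^{\bL_1}(\la_1)$ with the relevant component of ${}^*\!R_{\bN_1}^\bN\,{}^*\!R_\bN^\bL(\la)=0$. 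Finally, ${}^*\Psi_{\bG_1}^\bG\circ\Psi_{\bG_1}^\bG$ is the identity by $\Zc$-preservation, and surjectivity of $\Psi_{\bG_1}^\bG$ follows exactly as at the end of the proof of Proposition~\ref{prop:biject-Jcusp}.
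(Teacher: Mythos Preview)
Your proof is correct and follows essentially the same approach as the paper: both use the full Mackey formula (available under the exclusion of the three bad factors) to upgrade the uniform-criterion argument of Proposition~\ref{prop:biject-Jcusp} to genuine $e$-cuspidality, and then reuse the structural bookkeeping from that proposition for injectivity, the inverse map, and surjectivity. The only cosmetic differences are that the paper writes the intersection directly as $\bL_1\cap\bX^g=C_{\bL_1}(\Zc(\bX^g)_e)$ (making $e$-splitness immediate), and for the inverse map uses $\bY:=C_\bL(\Zc(\bX)_e)$ rather than your $\bN:=C_\bG(\Zc(\bN_1)_e)$---but these coincide since $\Zc(\bN_1)_e\ge\Zc(\bL)_e$.
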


\begin{proof}
We had already seen in the proof of Proposition~\ref{prop:biject-Jcusp}
that $\bL$ is $e$-split and $\Zc(\bL_1)_e=\Zc(\bL)_e$. For the
well-definedness of $\Psi_{\bG_1}^\bG$ it remains to show that $\la$ is
$e$-cuspidal. For any $e$-split Levi subgroup $\bX\le\bL$ the
Mackey formula \cite[Thm.]{BM11} gives
$$\eps_\bL\eps_{\bL_1}\sRXL(\la)={}\sRXL R_{\bL_1}^\bL(\la_1)
  =\sum_g R_{\bX\cap{}^g\bL_1}^\bX\,
   {}^*\!R_{\bX\cap{}^g\bL_1}^{^g\bL_1}(\la_1^g)$$
where the sum runs over a suitable set of double coset representatives
$g\in\bL^F$. Here, $\bX\cap{}^g\bL_1$ is $e$-split in $\bL_1$ since
$\bL_1\cap\bX^g=\bL_1\cap C_\bL(\Zc(\bX^g)_e)=C_{\bL_1}(\Zc(\bX^g)_e)$. The
$e$-cuspidality of $\la_1$ thus shows that the only non-zero terms in the
above sum are those for which $\bL_1\cap\bX^g=\bL_1$, i.e., those with
$\bL_1\le\bX^g$. But then $\Zc(\bL)_e=\Zc(\bL_1)_e=\Zc(\bX^g)_e$, and as $\bX$
is $e$-split in $\bL$ we deduce that necessarily $\bX=\bL$ if $\sRXL(\la)\ne0$.
So $\la$ is indeed $e$-cuspidal, and $\Psi_{\bG_1}^\bG$ is well-defined.
\par
Injectivity was shown in the proof of Proposition~\ref{prop:biject-Jcusp},
where we had constructed an inverse map with $\bL_1^*:=\bL^*\cap\bG_1^*$
and $\la_1$ the unique constituent of $^*R_{\bL_1}^\bL(\la)$ in
$\cE(\bL_1^F,s)$. We claim that $\la_1$ is $e$-cuspidal. Indeed, for any
$e$-split Levi subgroup $\bX\le\bL_1$ let $\bY:=C_\bL(\Zc(\bX)_e)$, an
$e$-split Levi subgroup of $\bL$. Then $^*R_\bX^{\bL_1}(\la_1)$ is a
constituent of
$$^*\!R_\bX^\bL(\la)={}^*\!R_\bX^\bY {}^*\!R_\bY^\bL(\la)=0$$
by $e$-cuspidality of $\la$, unless $\bY=\bL$, whence
$\bX=\bY\cap\bL_1=\bL\cap\bL_1=\bL_1$. \par
Thus we have obtained a well-defined map $^*\Psi_{\bG_1}^\bG$ from $e$-cuspidal
pairs in $\bG$ to $e$-cuspidal pairs in $\bG_1$, both below the series $s$.
The rest of the proof is again as for Proposition~\ref{prop:biject-Jcusp}.
\end{proof}

\subsection{Jordan decomposition, $e$-cuspidal pairs and $\ell$-blocks}

We next remove two of the three possible exceptions in
Proposition~\ref{prop:biject-ecusp} for characters in $\ell'$-series:

\begin{lem}   \label{lem:2E6,E7}
 The assertions of Proposition~\ref{prop:biject-ecusp} remain true for $\bG^F$
 having no factor $E_8(2)$ whenever $s\in{\bG^*}^F$ is a semisimple
 $\ell'$-element, where $e=e_\ell(q)$. In particular, $\Psi_{\bG_1}^\bG$
 exists.
\end{lem}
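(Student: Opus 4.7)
The plan is to reduce the statement to Proposition~\ref{prop:biject-Jcusp} by showing that, below an $\ell'$-Lusztig series, the notion of $e$-Jordan-cuspidality coincides with that of $e$-cuspidality under the hypotheses of the lemma. Proposition~\ref{prop:biject-Jcusp} already produces the candidate bijection $\Psi_{\bG_1}^\bG$ between $e$-Jordan-cuspidal pairs of $\bG_1$ and of $\bG$ below $\cE(\cdot,s)$, and this is available without any restriction on the type of $\bG^F$: the only invocation of the Mackey formula in its proof occurs when one of the Levi subgroups is a maximal torus, which is always covered by Deligne--Lusztig. Since $e$-cuspidality unconditionally implies $e$-Jordan-cuspidality by \cite[Prop.~1.10(ii)]{CE99}, what I need to verify is the reverse implication for pairs $(\bL',\la')$ with $\la'$ lying in the $\ell'$-series associated to an appropriate image of $s$ in $\bL'^{*F}$.

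I would first reduce to the genuinely new situation: one may assume $\bG^F$ has a factor of type $\tw2E_6(2)$ or $E_7(2)$, since otherwise Proposition~\ref{prop:biject-ecusp} applies directly. This forces $q=2$, hence $\ell$ is odd. When $\ell\ge5$, $\ell$ is good for every component of $\bG$ (the bad primes of $E_6$ and $E_7$ being only $2$ and $3$), and Remark~\ref{rem:KM}(3) immediately yields the coincidence of $e$-cuspidal and $e$-Jordan-cuspidal characters for all relevant pairs; the bijection $\Psi_{\bG_1}^\bG$ of Proposition~\ref{prop:biject-Jcusp} therefore restricts to a bijection on $e$-cuspidal pairs, as required.

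The main obstacle is the case $\ell=3$, which is bad for the exceptional components. Here I would argue by induction on $\dim\bG$. If $s$ is not quasi-isolated in $\bG^*$, I pick a proper $F$-stable Levi $\bG_2\lneq\bG$ with $\bG_2^*\supseteq C_{\bG^*}(s)$; the hypotheses of the lemma persist in $\bG_2$ (in particular it has no $E_8(2)$ factor), so the inductive hypothesis supplies the equivalence of $e$-cuspidality and $e$-Jordan-cuspidality for characters in $\cE(\bG_2^F,s)$. This equivalence then propagates to $\bG$ via Proposition~\ref{prop:biject-Jcusp} applied at each $e$-split Levi, together with the Bonnaf\'e--Rouquier bijection between the two Lusztig series. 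If on the other hand $s$ is quasi-isolated in $\bG^*$, I decompose $\bG$ into its $F$-orbits of simple factors; the projection of $s$ onto each factor of $\bG^*$ is again quasi-isolated, and the equivalence is provided by Remark~\ref{rem:KM}(4) on the exceptional factors and by the type-$A$ (or classical) instances of Proposition~\ref{prop:biject-ecusp} on the remaining ones.

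The subtle technical point I expect to require the most care is the transport step in the non-quasi-isolated case: one must check that the bijection $\Psi_{\bG_1}^\bG$ of Proposition~\ref{prop:biject-Jcusp} genuinely matches ordinary $e$-cuspidal pairs with $e$-cuspidal pairs after passing through Bonnaf\'e--Rouquier, \emph{without} invoking the Mackey formula in the full ambient group. This should follow from the parabolic-independence of Lusztig induction $R_{\bL_1}^\bL$ in the situation $\bL_1^*\supseteq C_{\bL^*}(s)$, which is precisely what the Bonnaf\'e--Rouquier Morita equivalence provides and which remains valid in our restricted regime.
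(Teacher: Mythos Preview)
Your overall strategy --- reduce to Proposition~\ref{prop:biject-Jcusp} by proving that $e$-Jordan-cuspidality and $e$-cuspidality coincide on $\ell'$-series --- is attractive, and your reduction to $\ell=3$ is correct (this is exactly the paper's first step, via \cite[Thm.~4.2]{CE99}). However, the $\ell=3$ argument has a genuine gap. The critical point is the one you yourself flag: showing that $\Psi_{\bG_1}^\bG$ carries $e$-cuspidal pairs to $e$-cuspidal pairs. In the proof of Proposition~\ref{prop:biject-ecusp}, the forward direction (that $\la=\pm R_{\bL_1}^\bL(\la_1)$ is $e$-cuspidal when $\la_1$ is) is obtained by computing ${}^*R_\bX^\bL R_{\bL_1}^\bL(\la_1)$ for an arbitrary $e$-split $\bX\le\bL$ via the Mackey formula; neither $\bX$ nor $\bL_1$ is a torus, so this genuinely needs the full formula. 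Parabolic-independence of $R_{\bL_1}^\bL$ (which, as you say, comes for free from the Bonnaf\'e--Rouquier setup) does not compute this composite. Consequently your inductive step in the non-quasi-isolated case stalls: from the inductive hypothesis you get $e$-cuspidal $=$ $e$-Jordan-cuspidal for pairs in $\bG_2$, and Proposition~\ref{prop:biject-Jcusp} then tells you $\Psi_{\bG_2}^\bG$ sends these to the $e$-Jordan-cuspidal pairs of $\bG$ --- but you have no tool to conclude they are $e$-cuspidal in $\bG$, which is the content you need. The backward direction (${}^*\Psi$ preserves $e$-cuspidality) uses only transitivity and works without Mackey, but this alone does not close the loop. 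Your quasi-isolated branch also does not help: Remark~\ref{rem:KM}(4) is stated for simple exceptional groups, whereas you need the equivalence for characters of arbitrary $e$-split Levis $\bL$ of $\bG$.

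The paper takes a completely different, ad hoc route. It does not attempt to prove the equivalence of the two cuspidality notions at all. Instead, for the two problematic groups $\tw2E_6(2)$ and $E_7(2)$ at $\ell=3$, it inspects (using explicit character-table/centraliser data) the centralisers $C_{\bG^*}(s)$ of all nontrivial semisimple $3'$-elements and observes that they have only type-$A$ factors (or, for $E_7(2)$, at worst small $D_4$-type factors whose proper Levis are again type $A$). Hence all characters in the relevant series are uniform, and the Mackey formula holds for them with respect to any Levi subgroup; the proof of Proposition~\ref{prop:biject-ecusp} then applies verbatim. So the paper salvages Mackey itself in the specific cases rather than circumventing it.
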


\begin{proof}
Let $s$ be a semisimple $\ell'$-element. Then by \cite[Thm.~4.2]{CE99}
we may assume that $\ell\le3$, so in fact $\ell=3$. The character table of
${\bG^*}^F=\tw2E_6(2).3$ is known; there
are 12 classes of non-trivial elements $s\in{\bG^*}^F$ of order prime to~6.
Their centralisers $C_{\bG^*}(s)$ only have factors of type $A$, and are
connected. Thus all characters in those series $\cE(\bG^F,s)$ are uniform, so
the Mackey-formula is known for them with respect to any Levi subgroup.
Thus, the argument in Proposition~\ref{prop:biject-ecusp} is applicable to
those series. For $\bG^F=E_7(2)$, the conjugacy classes of semisimple elements
can be found on the webpage \cite{Lue} of Frank L\"ubeck. From this one
verifies that again all non-trivial semisimple $3'$-elements have centraliser
either of type $A$, or of type $\tw2D_4(q)A_1(q)\Ph4$, or $\tw3D_4(q)\Ph1\Ph3$.
In the latter two cases, proper Levi subgroups are either direct factors, or
again of type $A$, and so once more the Mackey-formula is known to hold with
respect to any Levi subgroup.
\end{proof}

\begin{rem}
The assertion of Lemma~\ref{lem:2E6,E7} can be extended to most $\ell'$-series
of $\bG^F=E_8(2)$. Indeed, again by \cite[Thm.~4.2]{CE99} we only need to
consider $\ell\in\{3,5\}$. For $\ell=3$
there are just two types of Lusztig series for $3'$-elements which can not be
treated by the arguments above, with corresponding centraliser $E_6(2)\Ph3$
respectively $\tw2D_6(2)\Ph4$. For $\ell=5$, there are five types of Lusztig
series, with centraliser $\tw2E_6(2)\tw2A_2(2)$, $E_7(2)\Ph2$,
$\tw2D_7(2)\Ph2$, $E_6(2)\Ph3$ and $\tw2D_5(2)\Ph2\Ph6$ respectively. Note that
the first one is isolated, so the assertion can be checked using \cite{KM}.
\end{rem}

\begin{prop}   \label{prop:biject-ecusp2}
 Assume that $\bG^F$ has no factor $E_8(2)$. Let $s\in {\bG^*}^F$, and
 $\bG_1\le\bG$ an $F$-stable Levi subgroup with $\bG_1^*$ containing
 $C_{\bG^*}(s)$. Assume that $b$ is an $\ell$-block in $\cE_\ell(\bG^F,s)$,
 and $c$ is its Bonnaf\'e--Rouquier correspondent in $\cE_\ell(\bG_1^F,s)$.
 Let $e=e_\ell(q)$.
 \begin{itemize}
  \item[\rm(a)] Let $(\bL_1,\la_1)$ be $e$-cuspidal in $\bG_1$, where
   $(\bL,\la)=\Psi_{\bG_1}^\bG(\bL_1,\la_1)$. If all constituents of
   $R_{\bL_1}^{\bG_1}(\la_1)$ lie in $c$, then all constituents of $\RLG(\la)$
   lie in $b$.
  \item[\rm(b)] Let $(\bL,\la)$ be $e$-cuspidal in $\bG$, where
   $(\bL_1,\la_1)={}^*\Psi_{\bG_1}^\bG(\bL,\la)$. If all constituents of
   $\RLG(\la)$ lie in $b$, then all constituents of $R_{\bL_1}^{\bG_1}(\la_1)$
   lie in $c$.
 \end{itemize}
\end{prop}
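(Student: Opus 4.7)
The plan is to follow the proof of Proposition~\ref{prop:biject-Jcusp2} essentially verbatim, substituting Proposition~\ref{prop:biject-ecusp} (together with Lemma~\ref{lem:2E6,E7}) for Proposition~\ref{prop:biject-Jcusp} as the source of the bijection $\Psi_{\bG_1}^\bG$. The exclusion of a factor $E_8(2)$ is exactly what is needed so that $\Psi_{\bG_1}^\bG$ is defined on $e$-cuspidal pairs in $\ell'$-series: for such series the Mackey formula is known either unconditionally (Bonnaf\'e--Michel \cite{BM11}) or via the case analysis of Lemma~\ref{lem:2E6,E7}. Since $s\in\bG^{*F}$ is an $\ell'$-element (implicit in the notation $\cE_\ell(\bG^F,s)$), the hypothesis of Lemma~\ref{lem:2E6,E7} applies and $\Psi_{\bG_1}^\bG(\bL_1,\la_1)=(\bL,\la)$ with $\bL=C_\bG(\Zc(\bL_1)_e)$ and $\la=\eps_\bL\eps_{\bL_1}R_{\bL_1}^\bL(\la_1)$ is a well-defined $e$-cuspidal pair of $\bG$ below $\cE(\bG^F,s)$.

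For part~(a), I would use transitivity of Lusztig induction through the chain $\bL_1\le\bL\le\bG$ and $\bL_1\le\bG_1\le\bG$: since $\bL_1$ is a Levi subgroup of both $\bL$ and $\bG_1$, we have
\[
  \eps_\bL\eps_{\bL_1}\RLG(\la)\;=\;R_{\bL_1}^\bG(\la_1)\;=\;R_{\bG_1}^\bG\,R_{\bL_1}^{\bG_1}(\la_1).
\]
By hypothesis every constituent of $R_{\bL_1}^{\bG_1}(\la_1)$ lies in the block $c$ of $\bG_1^F$, so every constituent of the right-hand side lies in $R_{\bG_1}^\bG(\Irr(c))$. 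By the Bonnaf\'e--Rouquier Morita equivalence $\epsilon_{\bG_1}\epsilon_\bG R_{\bG_1}^\bG$ between blocks in $\cE_\ell(\bG_1^F,s)$ and blocks in $\cE_\ell(\bG^F,s)$, the block $c$ corresponds to $b$, so these constituents lie in $b$.

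For part~(b), I would argue by contradiction in the same spirit. Suppose some irreducible constituent $\eta$ of $R_{\bL_1}^{\bG_1}(\la_1)$ does not lie in $c$. Note that $\eta\in\cE_\ell(\bG_1^F,s)$, so $\eta$ lies in some other block $c'$ of $\bG_1^F$ inside $\cE_\ell(\bG_1^F,s)$; by Bonnaf\'e--Rouquier every irreducible constituent of $R_{\bG_1}^\bG(\eta)$ lies in the block $b'\ne b$ of $\bG^F$ corresponding to $c'$. Pushing forward through the identity $R_{\bL_1}^\bG(\la_1)=R_{\bG_1}^\bG R_{\bL_1}^{\bG_1}(\la_1)$ produces a constituent of $R_{\bL_1}^\bG(\la_1)=\eps_\bL\eps_{\bL_1}\RLG(\la)$ that lies outside $b$, contradicting the hypothesis on $\RLG(\la)$.

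The main obstacle is ensuring that the formula $\la=\eps_\bL\eps_{\bL_1}R_{\bL_1}^\bL(\la_1)$ genuinely yields an $e$-cuspidal (not merely $e$-Jordan-cuspidal) character, since the verification uses the Mackey formula; once this is handled by Proposition~\ref{prop:biject-ecusp} and Lemma~\ref{lem:2E6,E7}, the blockwise argument above is a formal consequence of transitivity of Lusztig induction and the Bonnaf\'e--Rouquier theorem, and no further subtlety enters.
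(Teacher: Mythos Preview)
Your proposal is correct and matches the paper's approach exactly: the paper simply states that the proof is identical to that of Proposition~\ref{prop:biject-Jcusp2}, with Proposition~\ref{prop:biject-ecusp} and Lemma~\ref{lem:2E6,E7} replacing Proposition~\ref{prop:biject-Jcusp}, which is precisely what you have spelled out in detail.
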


The proof is identical to the one of Proposition~\ref{prop:biject-Jcusp2},
using Proposition~\ref{prop:biject-ecusp} and Lemma~\ref{lem:2E6,E7} in
place of Proposition~\ref{prop:biject-Jcusp}.


\end{document}